\newtheorem{prop}{Proposition}[section]
\newtheorem{lemma}[prop]{Lemma}
\newtheorem{sublemma}[prop]{Sublemma}
\newtheorem{cor}[prop]{Corollary}
\newtheorem{theorem}[prop]{Theorem}
\newtheorem{conjecture}[prop]{Conjecture}
\newtheorem{defn}[prop]{Definition}
\DeclareMathOperator{\Avg}{Avg}
\DeclareMathOperator{\Deg}{Deg}
\DeclareMathOperator{\Dim}{Dim}
\DeclareMathOperator{\Sign}{Sign}
\DeclareMathOperator{\Err}{Err}
\DeclareMathOperator{\Dist}{Dist}
\DeclareMathOperator{\Vol}{Vol}
\DeclareMathOperator{\HausDist}{Haus Dist}
\numberwithin{equation}{section}
\newcommand{\dt}{\delta}
\newcommand{\dm}{\delta_m}
\newcommand{\dl}{\delta_l}
\newcommand{\CC}{\mathbb{C}}
\newcommand{\eps}{\epsilon}
\newcommand{\ZZ}{\mathbb{Z}}
\newcommand{\RR}{\mathbb{R}}
\newcommand{\EP}{E}
\newcommand{\PsiP}{\Psi_{par}}
\newcommand{\EG}{T}
\newcommand{\PsiG}{\Psi}
\DeclareMathOperator{\Angle}{Angle}
\DeclareMathOperator{\Fcn}{Fcn}
\DeclareMathOperator{\BL}{BL}
\newcommand{\BLp}{\BL^p}
\newcommand{\BLpkA}{\BL^p_{k,A}}
\newcommand{\BLpkAt}{\BL^p_{k,A/2}}
\newcommand{\ftv}{f_{\theta,v}}
\newcommand{\gtv}{g_{\theta,v}}
\newcommand{\tgtv}{\tilde g_{\ttv}}
\newcommand{\Ttv}{T_{\theta,v}}
\newcommand{\tTtv}{T_{\tilde \theta, \tilde v}}
\newcommand{\tv}{{\theta,v}}
\newcommand{\ttv}{{\tilde \theta,\tilde v}}
\newcommand{\tx}{\tilde x}
\newcommand{\tf}{\tilde f}
\newcommand{\tg}{\tilde g}
\newcommand{\tih}{\tilde h}
\newcommand{\tftv}{\tilde f_{\ttv}}
\newcommand{\tith}{\tilde \theta}
\newcommand{\tiv}{\tilde v}
\newcommand{\pkm}{\bar p(k,m)}
\newcommand{\pkn}{\bar p(k,n)}
\newcommand{\TTT}{\mathbb{T}}
\DeclareMathOperator{\RD}{RapDec}
\newcommand{\tNZR}{\tilde N_{R^{1/2 + \dm}} (Z) \cap B_R}
\newcommand{\NZR}{N_{R^{1/2 + \dm}} (Z) \cap B_R}
\newcommand{\NY}{N_{R^{1/2 + \dm}}(Y)}
\title{Restriction estimates using polynomial partitioning II}
\author{Larry Guth}
\begin{document}

\begin{abstract} We improve the estimates in the restriction problem in dimension $n \ge 4$.  To do so, we establish a weak version of a $k$-linear restriction estimate for any $k$.  The exponents in this weak $k$-linear estimate are sharp for all $k$ and $n$.
\end{abstract}

\maketitle


\section{Introduction}

This paper gives improved restriction estimates for the paraboloid in high dimensions.   Recall that the extension operator for the paraboloid can be written in the form

\begin{equation} \label{defEP} \EP f(x) := \int_{B^{n-1}} e^{ i ( x_1 \omega_1 + ... +  x_{n-1} \omega_{n-1} + x_n | \omega |^2) } f(\omega) d \omega, \end{equation}

\noindent where $B^{n-1}$ denotes the unit ball in $\RR^{n-1}$ and $x \in \RR^n$.  Stein \cite{Ste} conjectured that the extension operator should obey the inequality 

\begin{equation}  \label{restLp} \| \EP f \|_{L^p(\RR^n)} \lesssim \| f \|_{L^p(B^{n-1})} \end{equation}

\noindent for all $p > 2 \cdot \frac{n}{n-1}$.  We prove new partial results towards this conjecture in dimension $n \ge 4$.  

\begin{theorem} \label{L^pEP} For $n \ge 2$, the operator $\EP$ obeys the estimate (\ref{restLp}) if 

\begin{equation} \label{prangeodd} p >  2 \cdot \frac{3n + 1}{3n - 3} \textrm{ for } n \textrm{ odd }.
\end{equation}

\begin{equation} \label{prangeeven}  p > 2 \cdot \frac{3n + 2}{3n - 2} \textrm{ for } n \textrm{ even }.
\end{equation}

\end{theorem}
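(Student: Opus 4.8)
The plan is to reduce Theorem~\ref{L^pEP} to a sharp \emph{$k$-broad} (``weak $k$-linear'') restriction estimate, which is then proved by polynomial partitioning. By a standard $\eps$-removal argument it suffices to prove, for each $p$ strictly larger than the exponent in (\ref{prangeodd}) or (\ref{prangeeven}), the local bound $\|\EP f\|_{L^p(B_R)} \le C_\eps R^\eps \|f\|_{L^2(B^{n-1})}$ for all $R\ge 1$ (the $L^2$ normalization is equivalent to the $L^p$ one after parabolic rescaling and $\ell^2$-orthogonality of wave packets). Decompose $f=\sum_{\theta,v} f_{\theta,v}$ into wave packets adapted to $R^{-1/2}$-caps $\theta$ and a dual spatial lattice, so that $\EP f_{\theta,v}$ lives, up to $\RD(R)$ tails, on a tube $T_{\theta,v}$ of dimensions $R^{1/2}\times\cdots\times R^{1/2}\times R$. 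For an integer $2\le k\le n$ and a large $A$, let $\BLpkA \EP f$ denote the Bourgain--Guth $k$-broad quantity, obtained from $\sum_\theta \EP f_\theta$ after optimally deleting the caps whose directions lie within $A$ chosen $(k-1)$-planes. The argument rests on two statements: (A) the sharp $k$-broad estimate $\|\EP f\|_{\BLpkA(B_R)}\le C_\eps R^\eps \|f\|_{L^2}$ for $p\ge \frac{2(n+k)}{n+k-2}$, this range being best possible; and (B) a broad--narrow reduction converting (A) into (\ref{restLp}).

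For (A) I would argue by induction on the radius $R$, with an auxiliary induction on the dimension of an ambient variety. Polynomial partitioning produces a polynomial $P$ of bounded degree $d$ equidistributing $\int_{B_R}(\BLpkA \EP f)^p$ over the $\sim d^n$ cells of $\RR^n\setminus Z(P)$. In the \emph{cellular case} (a definite fraction of the mass lies in cells) each tube meets $\lesssim d$ cells, so the $L^2$-mass of $f$ on a typical cell drops by $\sim d^{-(n-1)}$; re-running the estimate cell by cell with the reduced mass and absorbing, the gain outweighs the loss already for $p>\tfrac{2n}{n-1}$, so this case is harmless. In the \emph{algebraic case} the mass concentrates in $N_{R^{1/2+\delta}}(Z)$ for the hypersurface $Z=Z(P)$; splitting the tubes into those transverse to $Z$ and those tangent to it, the transverse tubes meet the neighborhood in $\lesssim d$ segments of length $R^{1/2+\delta}$ and are suitably spread out (B\'ezout / polynomial Wolff-type transversality), so regrouping them and applying the estimate at the smaller scale is again harmless. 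The tangential tubes are the crux: their directions are confined to the tangent spaces of $Z$, so the problem there is effectively $(n-1)$-dimensional, and one runs polynomial partitioning again inside $Z$. Iterating, at the $j$-th step one works in a variety of dimension $n-j$ thickened at scale $R^{1/2+\delta_j}$ with $\delta_1\ll\delta_2\ll\cdots\ll\eps$; the recursion \emph{terminates} when the dimension reaches $k-1$, since then all surviving wave packets have directions in a single $(k-1)$-plane, which one uses as one of the $A$ deleted subspaces, so $\BLpkA \EP f$ vanishes on that neighborhood. Bookkeeping the $\sim d_j^{\,n-j}$-fold gains against the growing thickenings and the re-counting of tubes over the $\lesssim n-k$ iterations yields precisely the requirement $p\ge \frac{2(n+k)}{n+k-2}$.

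For (B) I would run the Bourgain--Guth broad--narrow iteration with $K=R^{\delta_0}$: at each point either $\sum_\tau |\EP f_\tau|$ over $K^{-1}$-caps is controlled, after rescaling, by the $k$-broad estimate (A), or the significant caps cluster into $\le A$ slabs around $(k-1)$-planes; the clustered pieces are estimated by lower-dimensional restriction together with parabolic rescaling and the induction in the scale $R$, and the whole process is iterated over dyadic $K$. This converts (A) into (\ref{restLp}), but the clustered (narrow) contributions incur a loss that worsens as $k$ grows, whereas the broad requirement $\frac{2(n+k)}{n+k-2}$ improves as $k$ grows; balancing the two fixes $k=\tfrac{n+1}{2}$ for odd $n$ and $k=\tfrac{n+2}{2}$ for even $n$, and substituting these values into $\frac{2(n+k)}{n+k-2}$ reproduces exactly (\ref{prangeodd}) and (\ref{prangeeven}).

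The main obstacle is entirely in (A): making the nested polynomial partitioning close at the \emph{sharp} exponent. This demands (i) an inductive scheme that simultaneously tracks the radius, the dimension of the current variety, and all the thickening parameters $\delta_j$, so that the accumulated $R^{\delta_j}$-losses stay negligible; (ii) the geometric input that tubes tangent to a low-degree variety of dimension $m$ genuinely behave like an $m$-dimensional family, together with the transversality and incidence estimates needed for the transverse tubes; and (iii) matching the exponent bookkeeping to the extremal wave-packet configurations, so that no slack is wasted at any stage of the iteration. Step (B) is then a comparatively standard, if delicate, adaptation of the Bourgain--Guth machinery.
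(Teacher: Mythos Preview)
Your overall architecture --- the sharp $k$-broad estimate (A) proved by nested polynomial partitioning, plus the broad--narrow reduction (B) with the stated choices of $k$ --- matches the paper, and your exponent arithmetic in (B) is correct. But there is a genuine gap in your sketch of (A): you have misidentified where the difficulty lies and missed the key new ingredient.

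You call the transverse subcase ``harmless'' and the tangential subcase ``the crux''. In the paper it is essentially the other way around. The tangential subcase is handled directly by induction on the dimension: the tangential wave packets are already tangent to the lower-dimensional variety $Y$ at the smaller scale, so one simply invokes the hypothesis for dimension $l<m$. The transverse subcase is the delicate one, and it is \emph{not} handled merely by B\'ezout-type counting of intersections. The issue is this: after cutting $B_R$ into balls $B_j$ of radius $\rho$, the wave packets of $f_{j,\mathrm{trans}}$ are still tangent to the $m$-dimensional variety $Z$ at scale $R$, but when you re-decompose at scale $\rho$ the new tubes live only in $N_{R^{1/2+\delta_m}}(Z)\cap B_j$, not in $N_{\rho^{1/2+\delta_m}}(Z)\cap B_j$. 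So you cannot apply the $m$-dimensional inductive hypothesis at radius $\rho$ directly; if you retreat to the $n$-dimensional hypothesis you lose the whole point. The paper's solution is to foliate $N_{R^{1/2+\delta_m}}(Z)\cap B_j$ by translates $N_{\rho^{1/2+\delta_m}}(Z+b)\cap B_j$ and prove a \emph{transverse equidistribution estimate}: because tangency to $Z$ forces the frequencies $\xi(\theta)$ to lie near an affine subspace, $|\EP f|$ is morally constant in the directions transverse to $Z$ at scale $R^{1/2}$, so each leaf receives only its fair share,
\[
\|f_{j,\mathrm{trans},b}\|_{L^2}^2 \;\lesssim\; R^{O(\delta_m)}\Bigl(\tfrac{R}{\rho}\Bigr)^{-(n-m)/2}\|f_{j,\mathrm{trans}}\|_{L^2}^2.
\]
This factor $(R/\rho)^{-(n-m)/2}$ is precisely what makes the transverse induction close at $p=\bar p(k,m)=\frac{2(m+k)}{m+k-2}$; without it the bookkeeping only yields the cellular threshold $p>\frac{2m}{m-1}$ in the transverse step, which does not give the theorem. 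Your item (ii) in the obstacle list gestures vaguely in this direction, but your actual sketch of the transverse step (``regrouping them and applying the estimate at the smaller scale is again harmless'') does not use it and would not close.
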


The best previous estimates for the problem were proven by Tao \cite{T2} for $n=4$ and by Bourgain and the author \cite{BG} for $n \ge 5$.  
 For $n = 4$, the conjecture is that (\ref{restLp}) holds for $p > 2 \frac{2}{3}$.  Theorem \ref{L^pEP} gives the range $p > 2.8$, and the best previous estimate was $p > 3$.  Asymptotically for large $n$, the conjecture is that  (\ref{restLp}) holds for $p$ bigger than the lower bound $ \frac{2n}{n-1} = 2 + 2 n^{-1} + O(n^{-2})$.  The lower bound for $p$ in Theorem \ref{L^pEP} is  $2 + \frac{8}{3} n^{-1} + O(n^{-2})$, and the lower bound in the best previous estimate was $2 + 3 n^{-1} +  O(n^{-2})$.  

The new ingredient of our argument has to do with algebraic structure.  Roughly speaking, the argument shows that if $\| \EP f \|_{L^p}$ is large, then the region where $| \EP f|$ is large must be organized into thin neighborhoods of low degree algebraic varieties.  Exploiting this structure leads to improved bounds on $\| \EP f \|_{L^p}$.    We find this algebraic structure using the tool of polynomial partitioning, which was introduced by Katz and the author in \cite{GK}.

Polynomial partitioning was first applied to the restriction problem in \cite{Gu4}, which gave the best current restriction estimate in dimension 3.  In this paper we combine that approach with ideas from \cite{BG}.  Besides making incremental progress on the restriction conjecture, the methods in this paper are related to sharp results for some other problems in the field, which we describe in the next two subsections.

\subsection{Related work} 

In this subsection, we describe two papers which build on this one, and adapt the methods to other problems.

In \cite{GHI}, Hickman, Iliopoulou and the author generalize Theorem \ref{L^pEP} to the setting of Hormander-type operators with positive-definite phase.  For this more general class of operators, the estimates are sharp up to the endpoint.  Hormander-type operators with positive-definite phase can be thought of as small perturbations of the extension operator $E$.  To formulate this precisely, we first write $\EP$ in a slightly different form.  We define the phase function 

\begin{equation} \label{defPsiP} \PsiP(y, \omega) := y_1 \omega_1  + ... + y_{n-1} \omega_{n-1} + y_n |\omega|^2. \end{equation}

\noindent We restrict $x$ to a ball of radius $R$, $B^n_R$.  For 
$x \in B^n_R$, we can write $\EP f(x)$ in the form

$$ \EP f(x) = \int_{B^{n-1}} e^{i R \PsiP(\frac{x}{R}, \omega)} f(\omega) d \omega. $$

\noindent We think of $\PsiP$ as a function from $B^n \times B^{n-1}$ to $\RR$.  We now consider other phase functions $\PsiG(y, \omega)$, which are small $C^\infty$ perturbations of $\PsiP(y, \omega)$ on $B^n \times B^{n-1}$.  For each such phase function $\PsiG$, and each scale $R$, we define an operator

\begin{equation} \label{defEG} \EG f(x) = \int_{B^{n-1}} e^{i R \PsiG(\frac{x}{R}, \omega)} f(\omega) d \omega. \end{equation}

\noindent Hormander introduced this type of operator in \cite{H}.  As we said above, operators of the form (\ref{defEG}) can be thought of as small perturbations of the extension operator $\EP$.  Hormander raised the question whether all such operators obey the $L^p$ bounds conjectured to hold for $\EP$, and he proved that this is the case when $n=2$.  But it turns out to be false for all $n \ge 3$.  A counterexample was found by Wisewell \cite{Wi} (cf. also \cite{BG}).  These counterexamples build on a well-known counterexample of Bourgain from \cite{B1} for a related but slightly different problem.  These counterexamples are surprising because they show that a $C^\infty$ small perturbation of the phase function can cause a major change in the behavior of the operator.  In this context, it is reasonable to ask about the best $L^p$ estimates that hold for all operators of the form (\ref{defEG}) -- the best estimates that are robust to such small perturbations.  Hormander \cite{H} answered this question in dimension $n=2$, and Lee \cite{L} did so in dimension $n=3$.  The paper \cite{GHI} does so for all $n$.  It shows that

$$ \| \EG f \|_{L^p(B^n_R) } \lesssim \| f \|_{L^p(B^{n-1})} \textrm{ for the range of $p$ in Theorem \ref{L^pEP}}. $$

\noindent The counterexamples from \cite{Wi} and \cite{BG} show that, up to the endpoint, this is the sharp range of $p$ in every dimension.  

In another direction, in \cite{OW}, Ou and Wang adapt the methods here to the case of the cone.  They prove the sharp range of restriction estimates for the cone in dimension $n \le 5$.  Previously, Wolff \cite{W1} proved the sharp range of restriction estimates for the cone in dimension $n \le 4$.

\subsection{$k$-linear estimates and $k$-broad estimates}

Multilinear estimates have played a key role in the recent developments in restriction theory.  Our main new result, which leads to Theorem \ref{L^pEP}, is a weaker version of a $k$-linear restriction estimate, which we call a $k$-broad estimate.  The exponents in our $k$-broad estimate are sharp for all $k$.  We recall some background on multilinear estimates and then formulate this new result.

We begin by recalling the wave packet decomposition.  Suppose we want to study $\EP f$ on a large ball $B_R \subset \RR^n$.  We decompose the domain $B^{n-1}$ into balls $\theta$ of radius $R^{-1/2}$.  Then we decompose $f$ in the form 

$$f = \sum_{\theta, v} f_{\theta, v}$$

\noindent where $f_{\theta, v}$ is supported in $\theta$ and has Fourier transform essentially supported in a ball around $v$ of radius $R^{1/2}$.  In the sum, $\theta$ ranges over our set of finitely overlapping balls covering $B^{n-1}$ and $v$ ranges over $R^{1/2} \ZZ^{n-1}$.  For each pair $(\theta, v)$, the restriction of $\EP f_{\theta,v}$ to $B_R$ is essentially supported on a tube $T_{\theta, v}$ with radius $R^{1/2}$ and length $R$.  The direction of this tube depends only on $\theta$, and we denote it by $G(\theta) \in S^{n-1}$.  We call $\EP f_{\theta, v}$ a wave packet.

We can now describe multilinear restriction estimates.  Given subsets $U_1, ...., U_k \subset B^{n-1}$, we say that they are transverse if, for any choice of $\theta_j \subset U_j$, the directions $G(\theta_1), ..., G(\theta_k)$ are quantitatively transverse in the sense that

\begin{equation} \label{defktrans} | G(\theta_1) \wedge ... \wedge G(\theta_k) | \gtrsim 1. \end{equation}

Building on important work of Wolff \cite{W1}, Tao \cite{T2} proved a sharp bilinear estimate for the extension operator $\EP$.

\begin{theorem} \label{2linear} (2-linear restriction, \cite{T2}) If $U_1, U_2 \subset B^{n-1}$ are transverse, and $f_j$ is supported in $U_j$, then

\begin{equation}  \label{2linest} \left\| \prod_{j=1}^2 | \EP f_j |^{1/2} \right\|_{L^p(B_R)} \lesssim R^\eps \prod_{j=1}^2 \| f_j \|_{L^2(B^{n-1})}^{1/2} \end{equation}

for $ p \ge 2 \cdot \frac{n + 2}{n}$.  

\end{theorem}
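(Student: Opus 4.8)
The statement to prove is Tao's bilinear restriction theorem: for transverse $U_1, U_2 \subset B^{n-1}$ and $f_j$ supported in $U_j$,
$$ \left\| \prod_{j=1}^2 | \EP f_j |^{1/2} \right\|_{L^p(B_R)} \lesssim R^\eps \prod_{j=1}^2 \| f_j \|_{L^2(B^{n-1})}^{1/2} $$
for $p \ge 2 \cdot \frac{n+2}{n}$. Let me think about how I would prove this.

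The $L^2$ endpoint $p = \frac{2(n+2)}{n}$ is the hard case; everything else follows by interpolation with the trivial $L^\infty$ bound and by Hölder (since we work on the bounded ball $B_R$, larger $p$ follows from smaller $p$ up to $R^\eps$ losses — actually one needs to be a bit careful, but the content is at the critical exponent). So I would focus on establishing
$$ \left\| |\EP f_1|^{1/2} |\EP f_2|^{1/2} \right\|_{L^{2(n+2)/n}(B_R)} \lesssim R^\eps \|f_1\|_2^{1/2} \|f_2\|_2^{1/2}. $$

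**Key steps.**

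First, I would perform the wave packet decomposition described in the excerpt, writing $\EP f_j = \sum_{\theta,v} \EP f_{j,\theta,v}$ with each piece concentrated on a tube $\Ttv$ of dimensions $R^{1/2} \times \cdots \times R^{1/2} \times R$ in direction $G(\theta)$. The heart of the argument is the bush/induction-on-scales machinery. The cleanest modern route is Tao's own argument via a \emph{globalization / induction-on-scales} step: one proves a "local" bilinear estimate at scale $R$ in terms of the same estimate at scale $R^{1/2}$ (or $R/2$), losing a controlled constant, and iterates. The induction hypothesis is the bilinear estimate with a constant $C(R) \lesssim R^\eps$, and one shows $C(R) \lesssim \bar C \cdot C(\sqrt R)^{\text{something}}$ or $C(R) \le (1+\text{small}) C(R/2) + (\text{bush term})$, which self-improves to $R^\eps$.

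The geometric input that makes the exponent $\frac{2(n+2)}{n}$ appear is transversality combined with an $L^2$ orthogonality / $TT^*$ estimate. Concretely: on a ball $B_{R^{1/2}}$, the bilinear expression can be estimated by bounding how many transverse pairs of tubes $(\Ttv, T_{\tilde\theta,\tilde v})$ can meet it — transversality forces their intersection to have volume $\lesssim R^{(n-1)/2} \cdot R^{1/2}$-type bounds, and combined with $L^2$-almost-orthogonality of the wave packets (distinct $v$'s are Fourier-separated) one gets the gain. The other route, which historically came first, is to combine a \emph{two-ends / Córdoba-type square function argument} with an induction on scales à la Wolff's hairbrush bound for the cone, transplanted to the paraboloid; Tao's paper does exactly this, bootstrapping a bilinear $L^2$ estimate off the bilinear-$L^2$ Kakeya/X-ray type bound.

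**The main obstacle.** The crux — and where all the real work lies — is the induction-on-scales closing argument: showing that the constant in the bilinear inequality satisfies a recursion that forces it to be $\lesssim_\eps R^\eps$ rather than $R^{\delta_0}$ for some fixed $\delta_0 > 0$. This requires carefully separating the contribution of wave packets that "focus" at a common point (the bush, handled by transversality at the single-ball level and giving the sharp numerology) from those that spread out (handled by rescaling and the inductive hypothesis), and controlling the number of scales so the accumulated loss is subpolynomial. A secondary technical obstacle is making the wave packet decomposition genuinely orthogonal and localizing $\EP f_{\theta,v}$ to its tube with only rapidly-decaying tails, so that summing over all $\sim R^{(n-1)/2}$ tubes per cap costs only $R^\eps$. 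Since this theorem is quoted from \cite{T2}, in the present paper one simply invokes it; I would not reproduce the full induction here but rather cite Tao's argument, noting that the $\eps$-loss and the restriction to a ball $B_R$ are exactly what makes the $L^2$-based argument close.
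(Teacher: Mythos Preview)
Your proposal correctly identifies that this theorem is not proved in the present paper but is quoted from Tao~\cite{T2}; the paper simply cites it as background for the discussion of multilinear restriction. Your sketch of Tao's induction-on-scales argument is a reasonable outline of what happens in~\cite{T2}, and your closing remark---that in this paper one just invokes the result---matches exactly what the paper does.
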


By an argument of Tao, Vargas, and Vega, \cite{TVV}, this bilinear estimate implies that $\| \EP f \|_{L^p(B_R)} \lesssim R^\eps \| f \|_{L^p(B^{n-1})}$ in the same range $p \ge 2 \cdot \frac{n + 2}{n}$.  The $\eps$-removal theorem (\cite{T1}), then implies $\| \EP f \|_{L^p(B_R)} \lesssim \| f \|_{L^p(B^{n-1})}$ for all $p > 2 \cdot \frac{n+2}{n}$. 

A few years after the bilinear results, Bennett, Carbery, and Tao \cite{BCT} proved a sharp $n$-linear estimate for $\EP$.  

\begin{theorem} \label{nlinear} (n-linear restriction, \cite{BCT}) If $U_1, ..., U_n \subset B^{n-1}$ are transverse, and $f_j$ is supported in $U_j$, then

\begin{equation}  \label{nlinest} \left\| \prod_{j=1}^n | \EP f_j |^{1/n} \right\|_{L^p(B_R)} \lesssim R^\eps \prod_{j=1}^n \| f_j \|_{L^2(B^{n-1})}^{1/n} \end{equation}

for $ p \ge 2 \cdot \frac{n}{n - 1}$.  

\end{theorem}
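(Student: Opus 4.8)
The plan is to prove Theorem~\ref{nlinear} by an induction on the scale $R$, with the multilinear Kakeya inequality as the geometric engine. For a radius $R$, let $\mathcal{M}(R)$ be the smallest constant such that
\[ \Big\| \prod_{j=1}^n |\EP f_j|^{1/n} \Big\|_{L^{p_0}(B_R)} \le \mathcal{M}(R) \prod_{j=1}^n \|f_j\|_{L^2(B^{n-1})}^{1/n}, \qquad p_0 := 2\cdot\tfrac{n}{n-1}, \]
holds for all transverse $U_1,\dots,U_n\subset B^{n-1}$ and all $f_j$ supported in $U_j$. The goal is $\mathcal{M}(R)\lesssim_\eps R^\eps$, which is exactly the asserted estimate at the endpoint exponent $p_0$; the full range $p\ge p_0$ then follows by one application of H\"older between $L^{p_0}$ and the trivial bound $\|\prod_j|\EP f_j|^{1/n}\|_{L^\infty}\lesssim\prod_j\|f_j\|_{L^2}^{1/n}$ (and no further $\eps$-removal is needed, as the target already carries an $R^\eps$). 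The base case $R=O(1)$ is immediate from that same trivial bound.

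For the inductive step, fix $R$, set $\rho=R^{1/2}$, and decompose $B_R$ into $\rho$-balls $B$. Decompose each $f_j=\sum_{\theta,v}f_{j,\theta,v}$ into wave packets at scale $R$, with $\theta$ of radius $R^{-1/2}$: up to rapidly decaying tails, $\EP f_{j,\theta,v}$ lives on the tube $T_{\theta,v}$ of radius $\rho$ and length $R$, and on a given $\rho$-ball $B$ it is essentially a modulated bump, contributing only when $T_{\theta,v}$ passes near $B$. Hence on $B$ one may replace $\EP f_j$ by $\EP f_j^{(B)}$, with $f_j^{(B)}:=\sum_{T_{\theta,v}\cap B\neq\emptyset}f_{j,\theta,v}$, up to a harmless $R^{O(\delta)}$ factor. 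Applying the inductive hypothesis at scale $\rho$ on each ball $B$ (using translation invariance of $\EP$) gives $\|\prod_j|\EP f_j|^{1/n}\|_{L^{p_0}(B)}\lesssim R^{O(\delta)}\mathcal{M}(\rho)\prod_j\|f_j^{(B)}\|_{L^2}^{1/n}$, where $\|f_j^{(B)}\|_{L^2}^2=\sum_{T_{\theta,v}\cap B\neq\emptyset}\|f_{j,\theta,v}\|_{L^2}^2$.

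Now raise to the power $p_0$, sum over the $\rho$-balls $B$, and compare with multilinear Kakeya. After pigeonholing, at the cost of a $(\log R)^{O(1)}$ factor, so that the surviving $\|f_{j,\theta,v}\|_{L^2}$ are all comparable --- so that $\|f_j^{(B)}\|_{L^2}^2$ is a common value times $\#\{T_{\theta,v}\in\mathbb{T}_j:T_{\theta,v}\cap B\neq\emptyset\}$, and $\|f_j\|_{L^2}^2$ is that value times $\#\mathbb{T}_j$, where $\mathbb{T}_j$ is the surviving tube family for $f_j$ --- and using the pointwise domination of $\#\{T_{\theta,v}\cap B\neq\emptyset\}$ by $\sum_{T_j\in\mathbb{T}_j}\chi_{3T_j}(x)$ for $x\in B$ together with the exact identity $p_0/(2n)=\tfrac1{n-1}$, one finds
\[ \sum_B\prod_{j=1}^n\|f_j^{(B)}\|_{L^2}^{p_0/n}\ \lesssim\ \rho^{-n}\,(\text{common value})^{p_0/2}\int_{B_R}\prod_{j=1}^n\Big(\sum_{T_j\in\mathbb{T}_j}\chi_{3T_j}\Big)^{\frac1{n-1}}. \]
The multilinear Kakeya inequality
\[ \int_{B_R}\prod_{j=1}^n\Big(\sum_{T_j\in\mathbb{T}_j}\chi_{T_j}\Big)^{\frac1{n-1}}\ \lesssim_\eps\ R^\eps\,R^{n/2}\prod_{j=1}^n(\#\mathbb{T}_j)^{\frac1{n-1}} \]
--- valid for families of $\rho$-tubes with directions within $1/100$ of the coordinate axes, and the one substantial geometric input, provable by the polynomial method or by a Loomis--Whitney-type induction on scales in the spirit of Bennett--Carbery--Tao --- then gives, since $\rho^{-n}R^{n/2}=1$, exactly $\sum_B\prod_j\|f_j^{(B)}\|_{L^2}^{p_0/n}\lesssim_\eps R^\eps\prod_j\|f_j\|_{L^2}^{p_0/n}$. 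Combined with the previous display, this produces the multiplicative recursion $\mathcal{M}(R)\lesssim_\eps R^{C\eps}\mathcal{M}(R^{1/2})$; iterating it $O(\log\log R)$ times, so that the accumulated constant $C^{O(\log\log R)}$ stays subpolynomial in $R$, yields $\mathcal{M}(R)\lesssim_\eps R^\eps$.

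The main obstacle is twofold. The multilinear Kakeya inequality is itself not elementary, but it is available and (in its polynomial-method form) essentially loss-free, so I would simply cite it. The remaining difficulty is the two-scale bookkeeping: one must check that passing from the functions $\EP f_j$ to tube sums --- local constancy on $\rho$-balls, control of the wave-packet tails, the pigeonhole equalizing the wave-packet norms, and the comparison between the sum over $\rho$-balls and the Kakeya integral --- loses only a subpolynomial or $R^{O(\delta)}$ factor at each of the $\log\log R$ stages, since an honest power loss per stage would compound and destroy the final $R^\eps$. Because every inequality used is sharp, the exponent is forced to be $p_0=2\cdot\tfrac n{n-1}$, matching the lower bound given by $n$ transverse single-cap wave packets whose tubes all meet in a single $\rho$-ball.
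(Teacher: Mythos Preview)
The paper does not prove Theorem~\ref{nlinear}; it is quoted as a background result of Bennett--Carbery--Tao \cite{BCT}, with no argument given here. So there is no ``paper's own proof'' to compare against.

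That said, your sketch is essentially the original Bennett--Carbery--Tao argument: bootstrap the multilinear restriction constant $\mathcal{M}(R)$ from $\mathcal{M}(R^{1/2})$ using a wave packet decomposition at scale $R$, the inductive hypothesis on each $R^{1/2}$-ball, and then the multilinear Kakeya inequality to sum the local $L^2$ norms. The arithmetic you record is correct: with $p_0=\tfrac{2n}{n-1}$ one has $p_0/(2n)=\tfrac{1}{n-1}$, the tubes have width $\rho=R^{1/2}$, and the scaling $\rho^{-n}R^{n/2}=1$ makes the Kakeya output match $\prod_j\|f_j\|_{L^2}^{p_0/n}$. The recursion $\mathcal{M}(R)\le C_\eps R^\eps \mathcal{M}(R^{1/2})$ then iterates to $\mathcal{M}(R)\lesssim_\eps R^\eps$ as you say.

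Two small points worth tightening if you write this out in full. First, $\EP$ is not translation invariant; what you use is that the \emph{estimate} is translation invariant (the bound on any ball of radius $\rho$ is the same), which is immediate since the hypotheses refer only to $\|f_j\|_{L^2}$. Second, the transversality hypothesis on $U_1,\dots,U_n$ in the paper is the wedge condition on the directions $G(\theta_j)$, which is exactly the transversality needed for multilinear Kakeya; you should state the Kakeya inequality with a general transversality constant rather than ``within $1/100$ of the coordinate axes,'' or note that one reduces to that case by a linear change of variables. Neither of these is a gap --- the argument is sound and is the standard one.
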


This theorem is important and remarkable in part because it involves the sharp exponent for the restriction problem: $p > 2 \cdot \frac{n}{n-1}$.  The paper \cite{BG} gives a technique to exploit multilinear restriction estimates in order to get improved estimates on the original restriction problem.  Since then, multilinear restriction has had many applications, including the striking recent work of Bourgain and Demeter on decoupling (see \cite{BD} and many followup papers).

Given this $2$-linear estimate and this $n$-linear estimate, it is natural to try to prove a $k$-linear estimate for all $2 \le k \le n$ which would include these two estimates as special cases.  Here is what looks to me like the natural conjecture, which I first learned from Jonathan Bennett.

\begin{conjecture}  \label{klinear} (k-linear restriction) If $U_1, ..., U_k \subset B^{n-1}$ are transverse, and $f_j$ is supported in $U_j$, then

\begin{equation}  \label{klinest} \left\| \prod_{j=1}^k | \EP f_j |^{1/k} \right\|_{L^p(B_R)} \lesssim R^\eps \prod_{j=1}^k \| f_j \|_{L^2(B^{n-1})}^{1/k} \end{equation}

for $ p \ge \pkn := 2 \cdot \frac{n +  k}{n + k - 2}$.  

\end{conjecture}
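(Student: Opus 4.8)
Conjecture \ref{klinear} reduces to Theorem \ref{2linear} when $k=2$ and to Theorem \ref{nlinear} when $k=n$, and the exponent $\pkn=2\cdot\frac{n+k}{n+k-2}$ decreases monotonically from $2\cdot\frac{n+2}{n}$ to $2\cdot\frac{n}{n-1}$ as $k$ runs from $2$ to $n$; so the first instinct is to interpolate. That is not meaningful as stated, since the two known estimates act on different numbers of functions, and the obvious fix---freezing fixed bumps into $n-k$ of the slots of the $n$-linear estimate---fails because the extension of a bump decays and is not bounded below on $B_R$. I would instead attack Conjecture \ref{klinear} directly, by the Bennett--Carbery--Tao method of \emph{induction on scales}, with the aim of reducing the $k$-linear restriction estimate to a $k$-linear Kakeya estimate in $\RR^n$.

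The plan: fix transverse $U_1,\dots,U_k$ and $f_j$ supported on $U_j$, decompose each $\EP f_j$ into wave packets at scale $R$, choose an intermediate radius $\rho$ (crudely $\rho=R^{1/2}$, iterated; or $\rho=R^{1-\delta}$), and cover $B_R$ by balls $B_\rho$. On each $B_\rho$ parabolic rescaling turns the scale-$R$ wave packets into modulated scale-$\rho$ wave packets, so by $L^2$ almost-orthogonality $\|\prod_j|\EP f_j|^{1/k}\|_{L^p(B_\rho)}$ is controlled by the scale-$\rho$ inductive hypothesis applied to the rescaled data; one then reassembles over the $B_\rho$ using a $k$-linear Kakeya inequality for the $\rho^{1/2}$-neighbourhoods of the tube cores, which are again transverse tubes, of shorter length. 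Such a $k$-linear Kakeya estimate is available---projecting $\RR^n$ along an $(n-k)$-plane transverse to the tube directions reduces it to the endpoint multilinear Kakeya theorem of \cite{BCT} (and its refinements) in $\RR^k$. One then balances the Kakeya gain of shape $\rho^{-c}$, $c\sim1/(k-1)$, against the orthogonality loss, and tries to arrange the geometric series in $R/\rho$ to converge at $p=\pkn$.

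The step I expect to be the real obstacle---and the reason Conjecture \ref{klinear} is still open---is making this induction close for $2<k<n$. When $k=n$ the Kakeya input is lossless: $G(\theta_1),\dots,G(\theta_n)$ span $\RR^n$, the localized tubes genuinely fill space, and the per-scale loss is $R^{o(1)}$. For $2<k<n$ the $(n-k)$-dimensional subspace orthogonal to $G(\theta_1)\wedge\dots\wedge G(\theta_k)$ is uncontrolled: a bundle of tubes all tangent to a common $k$-plane already shows that the \emph{efficient} $k$-linear Kakeya estimate is false (the projection step above loses a genuine power of $\delta$), and correspondingly the $f_j$ can be chosen so that every $\EP f_j$ concentrates in a thin neighbourhood of a common low-dimensional algebraic variety---a $k$-plane is the model case---and it is exactly such configurations that saturate $\pkn$. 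The induction then accrues a loss at each scale that does not sum. This ``tangential'' contribution is precisely what the polynomial-partitioning argument of this paper discards in order to obtain the weaker $k$-broad estimate $\BLpkA$ (which, conversely, \emph{does} follow from Conjecture \ref{klinear}); discarding it is illegitimate if one insists on the full product $\prod_j|\EP f_j|^{1/k}$. To recover the genuine $k$-linear estimate one would seemingly have to bound the tangential term by a lower-dimensional instance of Conjecture \ref{klinear} itself, nested inside a polynomial-partitioning induction on the dimension of the variety---a double induction whose transition I do not see how to close, and which is, I think, the real content of the conjecture.
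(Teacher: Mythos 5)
You have not given a proof, and there is none in the paper to compare against: the statement you were assigned is Conjecture \ref{klinear}, which the paper explicitly leaves open for $3 \le k \le n-1$ (the cases $k=2$ and $k=n$ being Theorems \ref{2linear} and \ref{nlinear}). So the only fully accurate assessment is that your proposal is a strategy sketch plus an obstruction analysis, not an argument establishing \eqref{klinest}, and you say as much yourself. Within that frame, your diagnosis is essentially the same as the author's: the paper states that the $k$-broad estimate of Theorem \ref{broadEP} was introduced precisely because the honest $k$-linear estimate could not be reached --- the quasi-triangle inequality \eqref{introquasitriangle} for $\BLpkA$ has no substitute in the genuine $k$-linear setting, and Section 11 records exactly the failure mode you describe: in a polynomial-partitioning induction some of the $k$ functions may have wave packets tangent to the partitioning variety $Z$ while others are transverse to it, and no good way to handle that mixed scenario is known. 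Your observations that the $k$-plane (slab) example saturates $\pkn$, that the lossless $n$-linear Kakeya input degenerates for $2<k<n$, and that a $k$-broad bound follows from but does not imply the $k$-linear conjecture are all consistent with the paper's discussion of the examples and of the relation between Conjecture \ref{klinear} and Theorem \ref{broadEP}.

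Two cautions if you intend to push the sketch further. First, your assertion that the Bennett--Carbery--Tao induction-on-scales ``accrues a loss at each scale that does not sum'' at $p=\pkn$ is a heuristic, not something you have verified; the precise bookkeeping of the Kakeya gain versus the orthogonality loss is exactly where such claims need to be made quantitative, and as written it cannot be cited as a proof that the route fails. Second, the projection argument you invoke for $k$-linear Kakeya in $\RR^n$ (projecting along an $(n-k)$-plane and applying multilinear Kakeya in $\RR^k$) requires the chosen $(n-k)$-plane to be uniformly transverse to all tube directions from all $k$ families simultaneously, which the transversality hypothesis \eqref{defktrans} alone does not supply without an additional pigeonholing or decomposition step; this is a fixable but real gap in the sketch. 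Neither point changes the bottom line: the statement is open, the paper proves only the weaker $k$-broad substitute, and your proposal correctly identifies both facts.
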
  

Having the full range of $k$-linear estimates available would improve the results from \cite{BG}.  Combining Conjecture \ref{klinear} with the method from \cite{BG} would give $\| \EP f \|_{L^p} \lesssim \| f \|_{L^p}$ for exactly the range of $p$ in Theorem \ref{L^pEP}.  

For $3 \le k \le n-1$, Conjecture \ref{klinear} is open.  In \cite{Be} and \cite{Be2}, Bejenaru proves multilinear estimates for certain curved hypersurfaces, but not including the paraboloid.  The surfaces he considers are foliated by $(k-1)$-planes and they are curved in the transverse directions in an appropriate sense.  The main new result of this paper is a weak version of Conjecture \ref{klinear}, which we call a $k$-broad restriction inequality.  To motivate this inequality, let us recall the approach from \cite{BG} for deducing linear estimates from multilinear ones.  

We decompose $B^{n-1}$ into balls $\tau$ of radius $K^{-1}$, where $K$ is a large constant.  This decomposition is much coarser than the decomposition into balls $\theta$ of radius $R^{-1/2}$.  We write $f = \sum_\tau f_\tau$ where $f_\tau$ is supported in $\tau$.  Next we subdivide $B^n_R$ into much smaller balls.  In \cite{BG}, we used balls of radius $K$, but it will be slightly more convenient here to use balls of radius $K^2$.  For each $B_{K^2} \subset B_R$, we consider $ \int_{B_{K^2}} | \EP f_\tau|^p$ for each $\tau$.  We say that $\tau$ contributes significantly to $B_{K^2}$ if

$$ \int_{B_{K^2}} | \EP f_\tau |^p \gtrsim K^{-10n} \int_{B_{K^2}} | \EP f|^p. $$

\noindent We let $S(B_{K^2})$ denote the set of $\tau$ which contribute significantly to $B_{K^2}$.   Now we break the balls $B_{K^2}$ into two classes.  We label a ball $B_{K^2}$ as $k$-transverse if there are $k$ significant $\tau$'s which are $k$-transverse in the sense above.  We label a ball $B_{K^2}$ as $k$-non-transverse otherwise.  A $k$-linear restriction estimate gives a good bound for the integral of $| \EP f |^p$ over the union of all of the $k$-transverse balls.  The paper \cite{BG} then gives an inductive argument to control the contribution from the $k$-non-transverse balls.

This inductive argument can be described most cleanly using the language of decoupling.  Building on \cite{BG}, Bourgain proved a decoupling theorem in \cite{B4} which implies that for each $k$-non-transverse ball $B_{K^2}$, 

$$ \| \EP f \|_{L^p(B_{K^2})}^2 \lesssim \sum_{\tau \in S(B_{K^2})} \| \EP f_\tau \|_{L^p(B_{K^2})}^2$$

\noindent for a certain range of $p$ which covers the exponents we study.  For a $k$-non-transverse ball $B_{K^2}$, all the significant $\tau$ have direction $G(\tau)$ within $K^{-1}$ of some $(k-1)$-plane.  In particular $|S(B_{K^2})| \lesssim K^{k-2}$.  Using this bound and Holder's inequality we get

\begin{equation} \label{narrowballs} \int_{B_{K^2}} |\EP f|^p \lesssim K^\alpha \sum_{\tau \in S(B_{K^2})} \int_{B_{K^2}} | \EP f_\tau|^p. \end{equation}

\noindent with $\alpha = (K^{k-2})^{\frac{p-2}{2}}$, which is optimal.  Summing this inequality over all the $k$-non-transverse balls gives

$$ \int_{\bigcup \textrm{$k$-non-transverse balls} } | \EP f|^p \lesssim K^\alpha \sum_\tau \int_{B_R} | \EP f_\tau|^p. $$

\noindent The right-hand side may then be controlled by induction on scales.

To make this strategy work, we do not need a full $k$-linear bound.  The decoupling estimate that we used to control the $k$-non-transverse balls applies whenever the significant $\tau$ for a ball $B_{K^2}$ all have directions $G(\tau)$ lying within the $O(K^{-1})$-neighborhood of $O(1)$ $(k-1)$-planes.  We call such a ball $k$-narrow.  To get the argument to work, we only need to bound
$\int |\EP f|^p$ over the remaining balls -- the $k$-broad balls.  

Here is a little notation so that we can state our $k$-broad bound precisely.  We let $G(\tau) = \cup_{\theta \subset \tau} G(\theta)$.  The set $G(\tau) \subset S^{n-1}$ is a spherical cap with radius $\sim K^{-1}$, representing the possible directions of wave packets in $\EP f_\tau$.  If $V \subset \RR^n$ is a subspace, then we write $\Angle( G(\tau), V)$ for the smallest angle between any non-zero vectors $v \in V$ and $v' \in G(\tau)$.  For each ball $B_{K^2} \subset B_R$, we consider $\int_{B_{K^2}} | \EP f_\tau |^p$ for every $\tau$.    To define the $k$-broad norm, we discount the contributions of $f_\tau$ with $G(\tau)$ lying near to a few $(k-1)$-planes, and we record the largest remaining contribution.  More formally, for a parameter $A$, we define

\begin{equation} \label{defmuEf} \mu_{\EP f} (B_{K^2}) := \min_{V_1, ..., V_A \textrm{ $(k-1)$-subspaces of } \RR^n} \left( \max_{\tau: \Angle(G(\tau), V_a) > K^{-1} \textrm{ for all } a } \int_{B_{K^2}} | \EP f_\tau|^p \right). \end{equation}

We can now define the $k$-broad part of $\| \EP f \|_{L^p(B_R)}$ by:

\begin{equation} \label{defBroadL^p}
\| \EP f \|_{\BLpkA(B_R)}^p := \sum_{B_{K^2} \subset B_R} \mu_{\EP f} (B_{K^2}) . 
\end{equation}

Our main new result is an estimate for this $k$-broad norm.

\begin{theorem} \label{broadEP} For any $ 2 \le k \le n$, and any $\eps > 0$, there is a large constant $A$ so that the following holds (for any value of $K$):

\begin{equation} \label{broadest} \| \EP f \|_{\BLpkA(B_R)} \lesssim_{K, \eps} R^\eps \| f \|_{L^2(B^{n-1})}, \end{equation}

for $ p \ge \pkn = 2 \cdot \frac{n +  k}{n + k - 2}$.  

\end{theorem}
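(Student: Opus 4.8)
The plan is to prove Theorem~\ref{broadEP} by polynomial partitioning, organized as a multi-parameter induction: an outer induction on the radius $R$, and, at each scale, an inner induction on the dimension $m$ of an algebraic variety that the wave packets are tangent to, and, within that, on the number of surviving wave packets. The first task is the standard dyadic cleanup. Decompose $f=\sum_{\tv}\ftv$ into wave packets; after discarding $\RD(R)$ tails and pigeonholing at the cost of $(\log R)^{O(1)}$, we may assume all surviving $\|\ftv\|_{L^\infty}$ are comparable and all $K^2$-balls contributing to $\|\EP f\|_{\BLpkA(B_R)}^p$ carry comparable $\mu_{\EP f}$; let $\TTT$ be the surviving index set. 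I would also record the elementary properties of the broad norm the argument needs: it is additive over disjoint families of $K^2$-balls and disjoint subsets of $B_R$; it is essentially monotone under passing to a subfamily of wave packets; and it is subadditive under splitting $f$ at the cost of halving $A$, i.e. $\|\EP(g+h)\|_{\BLpkA(U)}^p\lesssim \|\EP g\|_{\BLpkAt(U)}^p+\|\EP h\|_{\BLpkAt(U)}^p$ (this is why $A$ must be allowed large: large enough to survive the $O(1/\eps)$ halvings along any branch of the recursion). Parabolic rescaling reduces the case where $f$ lives on a single $K^{-1}$-cap to the same estimate at scale $R/K^2$, which, with the trivial bound for $R=O(1)$, launches the induction on $R$.

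The engine of the proof is an inductive statement \emph{finer} than \eqref{broadest}: for each $m$ with $k\le m\le n$, prove a bound of the shape $\|\EP f\|_{\BLpkA(\NZR)}\lesssim_{K,\eps}R^{\eps}\,\Gamma_m\,\|f\|_{L^2}$ whenever every wave packet of $f$ is tangent (within $R^{1/2+\dm}$) to a fixed $m$-dimensional transverse complete intersection $Z$ of controlled degree, where $\Gamma_m\le 1$ is a gain that is trivial at $m=n$ — so the $m=n$ case is exactly Theorem~\ref{broadEP} — and strictly smaller for $m<n$, reflecting that wave packets tangent to a low-dimensional variety inside $B_R$ are few. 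The tangency exponents $\dm$ (and the auxiliary scale $\dl$) form a hierarchy chosen so that everything balances.

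To prove the level-$m$ estimate I would apply a version of polynomial partitioning adapted to the variety $Z$: choose $P$ of degree $D$ (taken to be a small power $R^{\delta'}$, so that only $O(1/\delta')$ consecutive cellular steps occur) so that $\NZR$ splits into $\sim D^{m}$ cells each carrying $\sim D^{-m}$ of the broad mass, plus a wall $N_{R^{1/2+\dm}}(Z(P))\cap\NZR$. \textbf{Cellular case:} the mass is in the cells. A tube $\Ttv$ enters $\lesssim D$ cells, so $\sum_i|\TTT_{O_i}|\lesssim D|\TTT|$; hence a typical cell sees $\lesssim D^{1-m}|\TTT|$ wave packets, and since $\mu_{\EP f}$ on a $K^2$-ball only sees wave packets through that ball, we apply the level-$m$ hypothesis cell by cell with strictly fewer wave packets and sum. \textbf{Algebraic case:} the mass is in the wall. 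Cover $B_R$ by balls of radius $\rho<R$; in each, split the nearby wave packets into those transverse to $Z(P)$ and those tangent to it. The transverse ones are few through any subball (a line transverse to $Z(P)$ meets the $R^{1/2+\dm}$-neighborhood of $Z(P)$ in $\lesssim D$ segments, a Bézout-type bound), and after discarding the $A$ planes the broad norm ignores they contribute only a lower-order term; the tangent ones, inside a $\rho$-ball, are tangent to the $(m-1)$-dimensional transverse complete intersection $Z\cap Z(P)$ — of degree still controlled after updating constants — so we apply the level-$(m-1)$ hypothesis together with the outer induction on $R$ for the $\rho$-balls. Once $m$ would drop below $k$ the broad norm vanishes (directions confined to an $m$-plane with $m<k$ lie inside one admissible $(k-1)$-subspace), so the recursion terminates at $m=k$.

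The step I expect to be the main obstacle is the algebraic case, and within it the passage from dimension $m$ to $m-1$. The trouble is that $N_{R^{1/2+\dm}}(Z)\cap N_{R^{1/2+\dm}}(Z(P))$ is \emph{not} $N_{R^{1/2+\dm}}(Z\cap Z(P))$ in general — two varieties can be tangent, so their neighborhoods overlap in a large set while the varieties meet in a small one — so one must choose the intermediate scale $\rho$ and the tangency hierarchy $\dm,\dl$ carefully enough that, inside each $\rho$-ball, the wave packets tangent to both genuinely cluster within $R^{1/2}$ of the $(m-1)$-dimensional intersection, \emph{and} one must verify that this intersection is again a transverse complete intersection of controlled degree so the induction can continue (this is also what forces the use of a variety-adapted polynomial partitioning rather than the ambient one). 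The second source of friction is purely numerical: balancing the gains $\Gamma_m$, the tangency exponents, the $(\log R)^{O(1)}$ losses, and the halvings of $A$ so that the cellular closure (which wants $p\ge 2m/(m-1)$), the transverse lower-order terms, and the dimension-dropping recursion $n,n-1,\dots,k$ are all simultaneously consistent — the exponent $p=\pkn$, whose denominator $n+k-2=(n-1)+(k-1)$ pairs the ambient dimension against the degree of multilinearity, is precisely the threshold at which this whole system of inequalities closes.
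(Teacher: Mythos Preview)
Your architecture matches the paper's --- induction on the dimension $m$ of a variety $Z$ to which the wave packets are tangent, polynomial partitioning adapted to $Z$, a cellular case closed via $\sum_i\|f_i\|_{L^2}^2\lesssim D\|f\|_{L^2}^2$, and an algebraic case split into tangential (handled by induction on $m$) and transverse subcases --- but your treatment of the \emph{transverse} subcase has a real gap. You assert that the transverse wave packets ``contribute only a lower-order term,'' and this is false: they typically carry the bulk of the mass. The B\'ezout-type fact you cite does yield the orthogonality $\sum_j\|f_{j,\mathrm{trans}}\|_{L^2}^2\lesssim\|f\|_{L^2}^2$ across the $\rho$-balls $B_j$, but on each $B_j$ you still have to bound $\|\EP f_{j,\mathrm{trans}}\|_{\BLpkAt(B_j)}$ via the level-$m$ hypothesis at the smaller radius. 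The obstruction is that the scale-$\rho$ wave packets of $f_{j,\mathrm{trans}}$ are \emph{not} $\rho^{-1/2+\dm}$-tangent to $Z$ --- they only sit in the thicker slab $N_{R^{1/2+\dm}}(Z)\cap B_j$ --- so the level-$m$ hypothesis does not directly apply. Dropping to level $n$ loses the game: at $p=\bar p(k,m)$ the relevant exponent is $-e+\tfrac12=\tfrac{m-n}{2(m+k)}<0$, so replacing $R$ by $\rho$ makes the bound worse, not better, and the induction does not close.

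What closes this case, and what your proposal omits entirely, is the paper's central new ingredient: a \emph{transverse equidistribution} estimate. One covers $N_{R^{1/2+\dm}}(Z)\cap B_j$ by thin translates $N_{\rho^{1/2+\dm}}(Z+b)$, decomposes $f_{j,\mathrm{trans}}=\sum_b f_{j,\mathrm{trans},b}$ accordingly, and proves --- from the Fourier support of $\EP g$ restricted to a single $R^{1/2+\dm}$-ball together with the curvature of the paraboloid --- that the $L^2$ mass is spread evenly among the layers:
\[
\|f_{j,\mathrm{trans},b}\|_{L^2}^2 \;\lesssim\; R^{O(\dm)}\Bigl(R^{1/2}/\rho^{1/2}\Bigr)^{-(n-m)}\|f_{j,\mathrm{trans}}\|_{L^2}^2.
\]
Each $f_{j,\mathrm{trans},b}$ now \emph{is} tangent to the $m$-dimensional variety $Z+b$ at scale $\rho$, so the level-$m$ hypothesis applies by induction on the radius; summing in $j,b$, the equidistribution gain $(R^{1/2}/\rho^{1/2})^{-(n-m)(p/2-1)}$ exactly converts $\rho^{-e+1/2}$ into $R^{-e+1/2}$ at $p=\bar p(k,m)$ and closes the induction. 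The geometric issue you flag as ``the main obstacle'' --- that $N(Z)\cap N(Z(P))$ need not equal $N(Z\cap Z(P))$ --- is genuine but secondary, handled by the scale hierarchy $\rho^{1/2+\dl}=R^{1/2+\dm}$; without transverse equidistribution the transverse subcase cannot be closed at all.
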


The range of $p$ in Theorem \ref{broadEP} is sharp for all $k$ and $n$.  Using the method from \cite{BG} outlined above, Theorem \ref{broadEP} implies the restriction estimate Theorem \ref{L^pEP}.  

We should mention that $\BLpkA$ is not literally a norm, but it has some similar properties, which is why we use the norm notation.  In particular, $\BLpkA$ obeys the following weak version of the triangle inequality:  if $f = g+h$, then

\begin{equation} \label{introquasitriangle} \| E f \|_{\BLpkA (B_R)} \lesssim \| Eg \|_{\BL^p_{k, A/2}(B_R)} + \| E h \|_{\BL^p_{k, A/2}(B_R)}. \end{equation}

\noindent The reason for introducing the parameter $A$ is to use this version of the triangle inequality.  If we choose $A(\eps)$ very large, then we can effectively use the triangle inequality $O_\eps(1)$ times during our proof, and so $\BLpkA$ behaves almost like a norm.   We were not able to prove Conjecture \ref{klinear}, and the main issue is that in the true $k$-linear setting, we do not have a substitute for this triangle inequality.

\subsection{Examples}

To help digest Theorem \ref{broadEP}, we describe a couple examples.  These examples show that the range of exponents $p$ in Theorem \ref{broadEP} is sharp.

In one example, the wave packets $\EP \ftv$ concentrate in the $R^{1/2}$-neighborhood of a $k$-plane.  We denote this neighborhood by $W$.  Each wave packet $\EP \ftv$ has $| \EP \ftv(x) | \sim 1$ on the tube $\Ttv$, and rapidly decaying outside $\Ttv$.  
It is not hard to arrange that each point in the slab $W$ lies in many wave packets $\Ttv$, pointing in many directions within the $k$-plane.  For each ball $B_{K^2}$ in the slab, only a tiny fraction of the wave packets through this ball lie near to any $(k-1)$-plane.  In this scenario, $\| \EP f \|_{\BLpkA(B_R)} \sim \| \EP f \|_{L^p(B_R)}$.  We can also arrange that $| \EP f(x) |^2 \sim \sum_\tv | \EP \ftv(x) |^2$ at most points $x$ by replacing $\ftv$ by $\pm \ftv$ with independent random sings.  We can distribute the wave packets evenly so that $| \EP f(x)|$ is roughly constant on the slab.  Moreover, by a standard orthogonality argument $\| \EP f \|_{L^2(B_R)} \sim R^{1/2} \| f \|_{L^2}$.  Therefore, we get

$$ \frac{  \| \EP f \|_{\BLpkA(B_R)}}{ \| f \|_{L^2}} \sim  R^{1/2} \frac{ \| \EP f \|_{L^p(B_R)}}{ \| \EP f \|_{L^2(B_R)}} \sim  R^{1/2} | W |^{\frac{1}{p} - \frac{1}{2}}. $$

\noindent Since $|W| \sim R^k R^{(1/2)(n-k)}$, a short calculation shows that the ratio $ \| \EP f \|_{L^p(B_R)} / \| f \|_{L^2}$ is bounded for $p \ge \pkn$ and blows up for $p < \pkn$.  

But there are also more complicated sharp examples, coming from low degree algebraic varieties.  This type of example was first pointed out to me by Josh Zahl.  For instance, consider the quadric hypersurface $Z \subset \RR^4$ defined by

$$ (x_1/ R)^2 + (x_2/R)^2 - (x_3/R)^2 - (x_4/R)^2 = 1. $$

\noindent Each point of $Z$ lies in a 1-parameter family of lines in $Z$.  The union of the lines through a given point form a 2-dimensional cone.   For example, the point $(R, 0, 0, 0)$ lies in the cone defined by

$$x_1 = R; (x_2/R)^2 - (x_3/R)^2 - (x_4/R)^2 = 0. $$

\noindent If we take the $R^{1/2}$-neighborhoods of lines in $Z$, we can find many tubes in the $R^{1/2}$-neighborhood of $Z$.  We can now build an example like the one above using wave packets concentrated in the $R^{1/2}$-neighborhood of $Z$.  For each ball $B_{K^2}$ in this neighborhood, the wave packets through $B_{K^2}$ fill out a 2-dimensional cone, and very few of them lie near any 2-dimensional plane.  Therefore, $\| \EP f \|_{\BL^p_{3, A}(B_R)} \sim \| \EP f \|_{L^p(B_R)}$.  The rest of the discussion in the planar slab example applies here also, and so we see that this example is sharp for Theorem \ref{broadEP} in dimension 4 with $k=3$.  

For larger $n$, there are more variations on this example.  The dimension of the variety $Z$ in these examples is $k$.  
The degree of $Z$ may be larger than 2, although in the known examples it is always bounded by $C(n)$.  
Similar examples apply to Conjecture \ref{klinear}.  

These examples help to suggest that algebraic varieties could be relevant to Theorem \ref{broadEP}.  Polynomial partitioning is a tool that helps us to find and exploit the type of algebraic structure in these examples.  If we run through the proof of Theorem \ref{broadEP} on this type of example, the argument will find the variety $Z$.  

The new difficulty in this paper, compared with \cite{Gu4}, is that it is harder to find a $k$-dimensional variety for small $k$ then it is to find a hypersurface.  In the next section, we will describe the polynomial partitioning process and give a sense of the issues involved.

\subsection{A direction for further improvement}

The paper \cite{Gu4} applies polynomial partitioning to the restriction problem in 3 dimensions.  It proves an estimate which is stronger than Theorem \ref{L^pEP}, namely $\| \EP f \|_{L^p(\RR^3)} \lesssim \| f \|_{L^\infty}$ for $p > 3.25$.  
This estimate relies on one additional ingredient: an estimate for how many different $\theta$ can be represented by wave packets $T_{\theta,v}$ in the $R^{1/2}$-neighborhood of a low degree variety $Z$.  In the 3-dimensional case, recall that there are $\sim R$ balls $\theta \subset B^2$, each with radius $R^{-1/2}$.  Let $\Theta(Z)$ denote the set of $\theta$ so that at least one wave packet $T_{\theta, v}$ is contained in the $R^{1/2}$-neighborhood of $Z$.  Lemma 3.6 of \cite{Gu4} proves that if $Z$ is a 2-dimensional variety in $\RR^3$ of degree $\lesssim 1$, then $|\Theta(Z)| \lesssim_\eps R^{1/2 + \eps}$.  (If $Z$ is a 2-plane, then $|\Theta(Z)| \sim R^{1/2}$, and so the result says that the 
example of a plane is nearly the worst possible.)  

When $\| \EP f \|_{L^p}$ is large, the polynomial partitioning method locates algebraic pieces that contribute most of $\| \EP f \|_{L^p}$.  The bound for $|\Theta(Z)|$ gives a stronger estimate for the contribution of each such piece in terms of $\| f \|_{L^\infty}$ or $\| f \|_{L^p}$.  Note that for Hormander-type operators of positive definite phase in 3 dimensions, the estimate $\| \EG f \|_{L^p(B^3_R)} \lesssim R^\eps \| f \|_{L^\infty(B^2)}$ is false for all $p < 10/3$.  To prove the bound $\| \EP f \|_{L^p} \lesssim \| f \|_{L^\infty}$ for all $p > 3.25$, the argument from \cite{Gu4} has to distinguish $\EP$ from more general Hormander-type operators of positive-definite phase.
The bound on $|\Theta(Z)|$ is the step that does this.  In the Hormander case, $ \EG f_{\theta,v}$ is concentrated on a curved tube.  And in the counterexample from \cite{Wi} or \cite{BG}, there is a low-degree variety $Z$ whose $R^{1/2}$-neighborhood contains one such curved tube for every $\theta$.  

I have not been able to prove a good bound for $|\Theta(Z)|$ in higher dimensions.  Such a bound would lead to further improvements in the restriction exponents in high dimensions.  We will discuss this issue more in the final section of the paper.


\vskip5pt

{\bf Acknowledgements.}  I was supported by a Simons Investigator Award during this work.  I would also like to thank Marina Iliopoulou and Jongchon Kim for helpful comments on a draft of the paper.

\section{Sketch of the proof}

In this section, we sketch the proof of the $k$-broad estimate, Theorem \ref{broadEP}.  
We actually give two sketches.  The first sketch aims to show the main ideas of the argument.  The second sketch brings into play more of the technical issues, and it provides a detailed outline of the argument in the paper.

The proof begins with a wave packet decomposition.  We decompose the domain $B^{n-1}$ into balls $\theta$ of radius $R^{1/2}$.  We then decompose the function $f: B^{n-1} \rightarrow \CC$ as

$$ f = \sum_{\theta, v} f_{\theta, v}, $$

\noindent where $\ftv$ is supported on $\theta$ and the Fourier transform of $\ftv$ is essentially supported on a ball of radius $R^{1/2}$ around $v$.  In the sum, $v$ ranges over $R^{1/2} \ZZ^{n-1}$.  
On $B_R$, $\EP \ftv$ is essentially supported on a tube $T_{\theta, v}$ of radius $R^{1/2}$ and length $R$.  In addition, the functions $\ftv$ are essentially orthogonal.  In particular, we have

\begin{equation} \label{outorthog} \| f \|_{L^2}^2 \sim \sum_{\theta,v} \| \ftv \|_{L^2}^2. \end{equation}

Our goal is to prove that 

\begin{equation} \label{outgoal} \| \EP f \|_{\BLpkA (B_R)} \le C(\eps) R^\eps \| f \|_{L^2} \textrm{ for } p = \pkn := \frac{2(n+k)}{n + k -2}. \end{equation}

The proof will be by induction.  So we assume that (\ref{outgoal}) holds for balls of smaller radii, and in lower dimension.

Recall that

$$\| \EP f \|_{\BLpkA(B_R)}^p = \sum_{B_{K^2} \subset B_R} \mu_{\EP f} (B_{K^2}), $$

\noindent where $\mu_{\EP f}(B_{K^2})$ was defined in (\ref{defmuEf}).  We can extend $\mu_{\EP f}$ to be a measure on $B_R$, making it a constant multiple of the Lebesgue measure on each $B_{K^2}$.  In particular $ \mu_{\EP f}(B_R) = \| \EP f \|_{\BLpkA (B_R)}^p$. 

We now introduce polynomial partitioning.  We let $D$ be a large constant that we can choose later.  For a polynomial $P$ on $\RR^n$, we write $Z(P)$ for the zero set of $P$.  By Theorem 1.4 in \cite{Gu4}, there is a (non-zero) polynomial $P$ of degree at most $D$ on $\RR^n$, so that $\RR^n \setminus Z(P)$ is a disjoint union of $\sim D^n$ open cells $O_i$, and the measures $\mu_{\EP f}(O_i)$ are all equal. 

Next we consider how the wave packets $\EP \ftv$ interact with this partition.  We note that a line can cross $Z(P)$ at most $D$ times, and so a line can enter at most $D+1$ of the $\sim D^n$ cells $O_i$.  The tube $\Ttv$ can still enter many or all cells $O_i$, but it can only penetrate deeply into $D+1$ cells.  To make this precise, we define $W$ to be the $R^{1/2}$-neighborhood of $Z(P)$, and we define $O_i'$ to be $O_i \setminus W$.  If a tube $\Ttv$ enters $O_i'$, then the axis of $\Ttv$ must enter $O_i$, and so we get

\begin{equation} \label{outtubeDcells} \textrm{Each tube $\Ttv$ enters at most $D+1$ cells $O_i'$.}
\end{equation}

We now have 

$$ \mu_{\EP f}(B_R) = \sum_i  \mu_{\EP f}(O_i') + \mu_{\EP f}(W). $$

We say that we are in the cellular case when the contribution of the cells dominates and in the algebraic case when the contribution of $W$ dominates.  If we are in the cellular case, then there must be $\sim D^n$ cells $O_i'$ so that

\begin{equation} \label{outcelligood} \| \EP f \|_{\BLpkA(B_R)}^p \lesssim D^n \mu_{\EP f} (O_i'). \end{equation}

Next we study $\EP f$ on each of these cells $O_i'$.  We define $f_i$ by

\begin{equation} \label{2deff_i} f_i := \sum_{\theta,v : \Ttv \cap O_i' \not= \emptyset} \ftv. \end{equation}

\noindent Since $\EP \ftv$ is essentially supported on $\Ttv$, we see that $\EP f_i$ is almost equal to $\EP f$ on $O_i'$.
Therefore, $\mu_{\EP f_i} (O_i') \sim \mu_{\EP f} (O_i')$.  Now we study the $\mu_{\EP f_i}(O_i')$ using induction -- since $f_i$ involves fewer wave packets than $f$, it is a simpler object, and so it makes sense to assume by induction that our theorem holds for $f_i$.  This leads to the following bound:

\begin{equation} \label{outinducass} \mu_{\EP f} (O_i') \sim \mu_{\EP f_i} (O_i') \le \| \EP f_i \|_{\BLpkA (B_R)}^p \lesssim \left[ C(\eps) R^\eps \right]^p \| f_i \|_{L^2}^p. \end{equation}

Next we analyze $\| f_i \|_{L^2}$.  By the orthogonality of the $\ftv$, we have

$$ \sum_i \| f_i \|_{L^2}^2 \sim \sum_i \sum_{\theta,v : \Ttv \cap O_i' \not= \emptyset} \| \ftv \|_{L^2}^2
\sim \sum_{\theta, v} | \# \textrm{ cells } O_i' \textrm{ so that } \Ttv \cap O_i' \not= \emptyset | \cdot \| \ftv \|_{L^2}^2. $$

By (\ref{outtubeDcells}), each tube $\Ttv$ enters $\lesssim D$ cells $O_i'$, and so we get

\begin{equation*} \sum_i \| f_i \|_{L^2}^2 \lesssim D \sum_{\theta, v}  | \| \ftv \|_{L^2}^2 \sim D \| f \|_{L^2}^2. \end{equation*}

\noindent Since there are $\sim D^n$ cells $O_i'$ that obey (\ref{outcelligood}), we see that most of them must also obey

\begin{equation} \label{outfil2} \| f_i \|_{L^2}^2 \lesssim D^{1-n} \| f \|_{L^2}^2. \end{equation}

Combining this bound with (\ref{outcelligood}) and our inductive assumption (\ref{outinducass}), we get

$$  \| \EP f \|_{\BLpkA(B_R)}^p \le \left[ C D^{n + (1-n) \frac{p}{2}} \right] [C(\eps) R^\eps]^p \| f \|_{L^2}^p. $$

In this equation, the constant $C$ is the implicit constant from the various $\lesssim$'s.  It does not depend on $D$.  The induction closes as long as the term in brackets is $\le 1$.  Since we can choose the constant $D$, we can arrange that the induction closes as long as the exponent of $D$ is negative.  Given our value of $p$, we can check that the exponent of $D$ is $\le 0$, and the induction closes. 

Now we turn to the algebraic case.  In this case, the measure $\mu_{\EP f}$ is concentrated in $W$ -- the $R^{1/2}$-neighborhood of $Z(P)$ - a degree $D$ algebraic variety of dimension $n-1$.  There are two types of wave packets that contribute to $\mu_{\EP f}$ on $W$, which we describe roughly as follows:

\begin{itemize}

\item {\bf Tangential wave packets}: wave packets that are essentially contained in $W$.  For these wave packets, the direction of the tube $\Ttv$ is (nearly) tangent to $Z = Z(P)$.    

\item {\bf Transverse wave packets}: wave packets that cut across $W$.    

\end{itemize}

We first discuss the case that the tangential wave packets dominate.  To simplify the exposition, let us imagine for the moment that the variety $Z$ is a hyperplane, so $W$ is a planar slab of dimensions $R^{1/2} \times R \times ... \times R$.  We can also imagine that $f$ has the form $f = \sum \ftv$, where all the tubes $\Ttv$ are contained in $W$.

We study this case using induction on the dimension.  In the tangential algebraic case, the behavior of $\EP f$ on the hyperplane $Z$ can be controlled by applying Theorem \ref{broadEP} in dimension $n-1$.  Here is one way to set this up.  There is a standard $L^2$ estimate giving

\begin{equation} \label{outstandL2} \| \EP f \|_{L^2(B_R)} \lesssim R^{1/2} \| f \|_{L^2}. \end{equation}

\noindent It is not hard to reduce Theorem \ref{broadEP} to the case that $\| \EP f \|_{L^2(B_R)} \sim R^{1/2} \| f \|_{L^2}$, and so we can think of Theorem \ref{broadEP} in the equivalent form

$$ \| \EP f \|_{\BLpkA(B_R)} \lesssim R^{-1/2 + \eps} \| \EP f \|_{L^2(B_R)}. $$

\noindent We can apply Theorem \ref{broadEP} in dimension $n-1$ to study $\EP f$ on $Z$, and we get the estimate

$$ \| \EP f \|_{\BL^{\bar p(k, n-1)}_{k, A}(Z \cap B_R)} \lesssim R^{-1/2 + \eps} \| \EP f \|_{L^2(Z \cap B_R)}. $$

Now $\bar p(k,n-1) > \bar p(k,n) = p$.  Interpolating between this estimate and the $L^2$ estimate (\ref{outstandL2}), we get a bound of the form

$$ \| \EP f \|_{\BLpkA(Z \cap B_R)} \lesssim R^{-1/2 + e + \eps} \| \EP f \|_{L^2(Z \cap B_R)}, $$

\noindent for $p = \bar p(k,n)$, where $e$ is an exponent depending on $k, n$.  The same estimate holds not just for $Z$ but for any translate of $Z$ in the slab $W$.  These estimates control the behavior of $\EP f$ in the directions tangent to $Z$.  

To get a good estimate in the tangential algebraic case, we also have to control the behavior of $\EP f$ in the direction transverse to $Z$.  We will show that there is a direction transverse to the hyperplane $Z$ so that $| \EP f(x)|$ is morally constant as we move $x$ in this direction for distance $\lesssim R^{1/2}$.  We call this behavior a transverse equidistribution estimate.  It implies that 

$$ \| \EP f \|_{L^2(B_R)}^2 \sim \| \EP f \|_{L^2(W)}^2 \sim R^{1/2} \| \EP f \|_{L^2(Z \cap B_R)}^2, \textrm{ and}$$

$$\| \EP f \|_{\BLpkA(B_R)}^p \sim \| \EP f \|_{\BLpkA(W)}^p \sim R^{1/2} \| \EP f \|_{\BLpkA(Z \cap B_R)}^p. $$  

Using these estimates, we can control $ \| \EP f \|_{\BLpkA(B_R)}$ by induction on the dimension:

$$ \| \EP f \|_{\BLpkA(B_R)}^p \sim  \| \EP f \|_{\BLpkA(W)}^p \sim R^{1/2}  \| \EP f \|_{\BLpkA(Z \cap B_R)}^p $$

$$ \lesssim R^{1/2} R^{(-1/2 + e + \eps)p} \| \EP f \|_{L^2(Z \cap B_R)}^p \sim R^{1/2} R^{(-1/2 + e + \eps)p} \left( R^{-1/2} \| \EP f \|_{L^2(B_R)}^2 \right)^{p/2}. $$

There are a lot of messy powers of $R$ in this computation.  But plugging in $p = \pkn$ and working out the exponent, one gets $\| \EP f \|_{\BLpkA(B_R)} \lesssim R^{-1/2 + \eps} \| \EP f \|_{L^2(B_R)} \lesssim R^\eps \| f \|_{L^2}$, which closes the induction in the tangential algebraic case.  The exponent $p = \pkn = 2 \cdot \frac{n +  k}{n + k - 2}$ is exactly the exponent needed to make the powers of $R$ work out in this computation.  

Next we sketch the reason for this transverse equidistribution.  From the definition of $\EP$, we see that $\EP \ftv$ has Fourier transform supported in a ball of radius $R^{-1/2}$ around

$$ \xi(\theta) = (\omega_{\theta,1}, ..., \omega_{\theta, n-1}, | \omega_\theta|^2),$$

\noindent where $\omega_\theta$ denotes the center of $\theta$.  We know that all the wave packets $\EP \ftv$ in $\EP f$ are supported in tubes $\Ttv \subset W$.  This situation restricts the directions $G(\theta)$, which in turn restricts the frequencies $\xi(\theta)$.
The connection between $G(\theta)$ and $\xi(\theta)$ is simplest for a slightly different operator - the extension operator for the sphere.  In that case, $G(\theta) = \xi(\theta)$.  Since the tubes $\Ttv$ all lie in $W$, the directions $G(\theta)$ all lie in the $R^{-1/2}$-neighborhood of the subspace $V \subset \RR^n$ parallel to $Z$.  Therefore, the frequencies $\xi(\theta)$ all lie in the $R^{-1/2}$-neighborhood of $V$ also.  
So the Fourier transform of $\EP f$ is supported in the $R^{-1/2}$-neighborhood of $V$.  If $\ell$ denotes a line perpendicular to $V$ (or to $Z$), then the restriction of $\EP f$ to $\ell$ has Fourier transform supported in a ball of radius $R^{-1/2}$.  Therefore, $| \EP f |$ is morally constant as we move along the line $\ell$ for distances $\lesssim R^{1/2}$.

For the extension operator for the paraboloid, the situation is similar but a touch messier.  We know that the directions $G(\theta)$ all lie in the $R^{-1/2}$-neighborhood of the plane $V$.  A short calculation shows that the frequencies $\xi(\theta)$ all lie in the $R^{-1/2}$-neighborhood of an affine hyperplane $V'$.  The hyperplane $V'$ is not equal to $V$, but the angle between $V$ and $V'$ is fairly small.  If $\ell$ is perpendicular to the plane $V'$, then it still follows that $|\EP f|$ is morally constant as we move along $\ell$ for distances $\lesssim R^{-1/2}$.  The line $\ell$ is no longer exactly perpendicular to the original plane $Z$, but it is still quantitatively transverse to $Z$, and this is good enough for our application.   

In this sketch, we assumed that $Z$ is a hyperplane.  But in the real proof we cannot assume this.  We have to set up the induction on dimension in a different way, taking into account the possibility that $Z$ is curved.  We explain this in the next subsection.

Finally, it can happen that the transverse wave packets dominate.  In this last case, $\mu_{\EP f}(W)$ dominates $\mu_{\EP f}(B_R)$, but the wave packets transverse to $W$ make the main contribution to $\mu_{\EP f}(W)$.  In this case, we can imagine that $f = \sum \ftv$ where each tube $\Ttv$ is transverse to $W$.  Recall that the number of times a line can cross the hypersurface $Z$ is $\le D \lesssim 1$.  Similarly, we will prove that the number of times a tube $\Ttv$ can cross the surface $W$ is $\lesssim 1$.

In this case, we subdivide the ball $B_R$ into smaller balls $B_j$ with radius $\rho \ll R$.  A tube $\Ttv$ enters $\sim (R /\rho) \gg 1$ of these balls.  But because of the discussion above, a tube $\Ttv$ can cross $W$ transversely in $\lesssim 1$ balls $B_j$.  We define

$$ \TTT_{j, trans} := \{ (\tv) | \Ttv \textrm{ crosses $W$ transversely in } B_j \}, $$

$$ f_{j, trans} := \sum_{(\tv) \in \TTT_{j, trans}} \ftv. $$

Since a tube $\Ttv$ can cross $W$ transversely in $\lesssim 1$ balls $B_j$, each $(\tv)$ belongs to $\lesssim 1$ sets $\TTT_{j, trans}$.  Therefore,

\begin{equation} \label{outfjtransl2} \sum_j \| f_{j, trans} \|_{L^2}^2 \lesssim \| f \|_{L^2}^2. \end{equation}

Since we assumed that all the wave packets in $f$ intersect $W$ transversely, $\EP f_{j, trans}$ is essentially equal to $\EP f$ on $W \cap B_j$.  Therefore,

$$ \mu_{\EP f} (W \cap B_j) \le \mu_{\EP f_{j, trans}} (B_j). $$

Since we are in the algebraic case, and since we assumed that all the tubes $\Ttv$ intersect $W$ transversely, we have

\begin{equation} \label{outtotalsumj} \mu_{\EP f} (B_R) \lesssim \mu_{\EP f} (W) \sim \sum_j \mu_{\EP f}(W \cap B_j) \lesssim \sum_j \mu_{\EP f_{j,trans}} (B_j). \end{equation}

By induction on the radius, we can assume that

$$ \mu_{\EP f_j}(B_j) \le \left[ C(\eps) \rho^\eps \right]^p \| f_{j, trans} \|_{L^2}^p. $$

Plugging this bound into (\ref{outtotalsumj}) and then applying (\ref{outfjtransl2}), we get

$$ \| \EP f \|_{\BLpkA(B_R)}^p \lesssim \left[ C(\eps) \rho^\eps \right]^p \sum_j \| f_{j, trans} \|_{L^2}^p.  $$

Since $p > 2$, we get $\sum_j \| f_{j, trans} \|_{L^2}^p \le ( \sum_j \| f_{j, trans} \|_{L^2}^2)^{p/2} \lesssim \| f \|_{L^2}^p$.  Therefore, we have 
 
$$ \| \EP f \|_{\BLpkA(B_R)}^p \lesssim  \left[ C(\eps) \rho^\eps \right]^p \| f \|_{L^2}^p. $$

Since $(R/ \rho)$ is large, $C \rho^{\eps} < R^\eps$, and so this closes the induction in the transverse algebraic case.

\subsection{Studying wave packets tangent to a variety} In the preceding sketch, we considered the special case that the variety $Z$ is a hyperplane.  In this special situation, the tangential algebraic case reduces to the original theorem in dimension $n-1$.  In the full proof we need to consider curved varieties $Z$, and so we have to do the induction on the dimension in a different way.

If $Z$ is an $m$-dimensional variety in $\RR^n$, then we say that the tube $\Ttv$ is $\alpha$-tangent to $Z$ in $B_R$ if the following two conditions hold:

\begin{itemize}

\item Distance condition:

$$ \Ttv \subset N_{\alpha R}(Z) \cap B_R. $$

\item Angle condition: If $z \in Z \cap N_{\alpha R}(\Ttv)$, then

$$ \Angle (T_z Z, G(\theta) ) \le \alpha. $$

\end{itemize}

For each dimension $m$, we will choose an angle $\alpha_m$ slightly larger than $R^{-1/2}$, and then we define

$$ \TTT_{Z} := \{ (\tv): \Ttv \textrm{ is $\alpha_m$-tangent to $Z$ in $B_R$} \}. $$

Our main technical result is Proposition \ref{mainind}.  It says that if $Z$ is an $m$-dimensional variety of controlled degree, and if $f = \sum_{(\tv) \in \TTT_{Z}} \ftv$, then (for a range of exponents $p \ge 2$),

$$\| \EP f \|_{\BLpkA(B_R)} \le M \| f \|_{L^2}, $$ 

\noindent where $M$ is a fairly complicated expression which depends on the parameters of the setup, including the exponent $p$, the radius $R$, the dimension $m$, the value of $A$, etc.  

The proof of Proposition \ref{mainind} is by induction on the dimension $m$.  The base of the induction is the case $m=k-1$.  In this case, $\| \EP f \|_{\BLpkA(B_R)}$ is negligibly small.  To see this, consider a small ball $B_{K^2}$.  The function $\EP f$ is essentially supported in $N_{R^{1/2}}(Z) \cap B_R$, so we can assume $B_{K^2} \subset N_{R^{1/2}}(Z) \cap B_R$.  Because of the angle condition, all the wave packets $\Ttv \in \TTT_Z$ that pass through $B_{K^2}$ have direction $G(\theta)$ within a small angle of an $(k-1)$-plane -- the plane $T_z Z$ for a point $z \in Z$ near to $B_{K^2}$ -- and so the ball $B_{K^2}$ makes a negligible contribution to $\| \EP f \|_{\BLpkA}$.  This provides the base of the induction.

The proof of Proposition \ref{mainind} follows the rough outline of our first sketch.  We again use polynomial partitioning.  Under the hypotheses of the proposition, we know that $\EP f$ is essentially supported in $N_{R^{1/2}}(Z)$ for a given $m$-dimensional variety $Z$.  We want to find a polynomial $P$ so that $Z(P)$ cuts $N_{R^{1/2}}(Z)$ into smaller cells.  To do this, we choose new orthogonal coordinates $y_1, ..., y_n$ so that the projection of $Z$ to the $(y_1, ..., y_m)$-plane is non-degenerate on a significant portion of $Z$.  Then we let $P$ be a polynomial in $y_1, ..., y_m$.  In this case, $Z(P)$ intersects $Z$ transversely (at least on a significant portion of $Z$), and this makes the polynomial partitioning work.  Essentially everything works as in the first sketch, except that because $P$ depends on only $m$ variables, the number of cells $O_i$ is only $\sim D^m$.  This will affect the final exponents, but the method of the argument is the same.  If $\mu_{\EP f}$ is concentrated on the cells $O_i'$, then we can prove the desired bounds by induction.  Otherwise, $\mu_{\EP f}$ is concentrated in the $R^{1/2}$-neighborhood of a lower-dimensional variety $Y = Z \cap Z(P)$.  

Now we turn to this algebraic case.  In the algebraic case, there is a lower-dimensional variety $Y$, with $\Deg Y \lesssim \Deg Z$, so that 

$$\mu_{\EP f} ( N_{R^{1/2}}(Y) \cap B_R) \gtrsim \mu_{\EP f} (B_R) \sim \mu_{\EP f} (N_{R^{1/2}}(Z) \cap B_R). $$

\noindent There are two types of wave packets that contribute to $\mu_{\EP f}$ on $N_{R^{1/2}}(Y) \cap B_R$: tangential wave packets, which lie in $N_{R^{1/2}}(Y)$ and run tangent to $Y$, and transverse wave packets, which cut across $N_{R^{1/2}}(Y)$.  (Recall that all the wave packets are tangent to $Z$ by hypothesis.)

In our current setup, if all the wave packets are tangent to $Y$, then we get the desired estimate just by induction on the dimension $m$.

However, there may be a mix of transverse and tangential wave packets.  If we let $f_{tang}$ be the sum of the tangential wave packets and $f_{trans}$ be the sum of the transverse wave packets, then the quasi-triangle inequality for $\BLpkA$ (\ref{introquasitriangle}) gives

$$ \| \EP f \|_{\BLpkA(B_R)} \lesssim \| \EP f_{tang} \|_{\BLpkAt(B_R)} +  \| \EP f_{trans} \|_{\BLpkAt(B_R)}. $$

\noindent This is the step in the proof where we need to use the quasi-triangle inequality. 

We can handle the tangential terms by induction on the dimension, and now we turn to the transverse terms.  As in the last sketch, we decompose $B_R = \cup_j B_j$, where each ball $B_j$ has radius $\rho \ll R$.  We define $f_{j, trans}$ as above to be the sum of wave packets that intersect $N_{R^{1/2}}(Y)$ transversely in $B_j$.  As in the previous sketch, geometric arguments show that a tube $\Ttv$ can intersect $N_{R^{1/2}}(Y)$ transversely in $\lesssim 1$ balls $B_j$, and so

$$\sum_j \| f_{j, trans} \|_{L^2}^2 \lesssim \| f \|_{L^2}^2. $$

Next we want to study $\| \EP f_{j, trans} \|_{\BLpkAt(B_j)}$ by using induction on the radius.  This step is more complicated than in the previous sketch.  We know that $f$ is concentrated on wave packets that are tangent to $Z$ on $B_R$, and we need to use that information.  We expand $\EP f_{j, trans}$ into wave packets on the ball $B_j$.  Since $B_j$ has radius $\rho$, each wave packet is essentially supported on a tube of radius $\rho^{1/2}$ and length $\rho$ in $B_j$.  When we examine this wave packet decomposition, it is not exactly true that all the wave packets are tangent to $Z$ in $B_j$ -- in fact, something better is true.  The wave packets of $f_{j, trans}$ on $B_j$ all lie in $N_{R^{1/2}}(Z) \cap B_j$, but they don't necessarily lie in $N_{\rho^{1/2}}(Z) \cap B_j$.  We cover $N_{R^{1/2}}(Z) \cap B_j$ with disjoint translates of $N_{\rho^{1/2}}(Z) \cap B_j$:

$$ N_{R^{1/2}}(Z) \cap B_j = \cup_b N_{\rho^{1/2}}(Z + b) \cap B_j. $$

\noindent Now it turns out that each wave packet of $f_{j, trans}$ lies in one of these translates.  We let $f_{j, trans, b}$ be the sum of the wave packets that lie in $N_{\rho^{1/2}(Z+b)} \cap B_j$.  Now we have $f_{j, trans} = \sum_b f_{j, trans, b}$, and

$$ \| \EP f_{j, trans} \|_{\BLpkAt(B_j)}^p \sim \sum_b \| \EP f_{j, trans} \|_{\BLpkAt(N_{\rho^{1/2}}(Z+b) \cap B_j)}^p \sim \sum_b \| \EP f_{j, trans, b} \|_{\BLpkAt(B_j)}^p. $$

The wave packets of $\EP f_{j, trans, b}$ are tangent to the $m$-dimensional variety $Z+b$ on $B_j$.  Therefore, we can study $ \| \EP f_{j, trans, b} \|_{\BLpkAt(B_j)}$ by induction on the radius: we can assume that

$$ \| \EP f_{j, trans, b} \|_{\BLpkAt(B_j)} \le M(\rho, A/2) \| f_{j, trans, b} \|_{L^2}. $$

(Here we write $M(\rho, A/2)$ because $M$ depends on the radius (which is $\rho$) and because we have $A/2$ in place of $A$.)  Putting together what we have learned so far in the transverse algebraic case, we have 

$$ \| \EP f \|_{\BLpkA(B_R)}^p \lesssim \sum_j \| \EP f_{j, trans} \|_{\BLpkAt(B_j)}^p $$

$$ \sim \sum_{j, b} \| \EP f_{j, trans, b} \|_{\BLpkAt(B_j)}^p \le M(\rho, A/2)^p \sum_{j,b} \| f_{j, trans, b} \|_{L^2}^p. $$

To get our final bound, it remains to control $\sum_{j,b} \| f_{j, trans, b} \|_{L^2}^p$.  Since each wave packet of $f_{j, trans}$ lies in exactly one $f_{j, trans,b}$, it is easy to check that $\| f_{j, trans} \|_{L^2}^2 \sim \sum_b \| f_{j, trans, b} \|_{L^2}^2$.  We also already know that $\sum_j \| f_{j, trans} \|_{L^2}^2 \lesssim \| f \|_{L^2}^2$.  And so we see that

$$ \sum_{j,b} \| f_{j, trans,b} \|_{L^2}^p \lesssim \left( \max_{j,b} \| f_{j, trans, b} \|_{L^2}^{p-2} \right) \| f \|_{L^2}^2. $$

The last ingredient of the proof is an estimate for $\max_{j,b} \| f_{j, trans, b} \|_{L^2}$, which has to do with transverse equidistribution.  Recall that since $f$ is concentrated on wave packets in $\TTT_Z$, $\EP f$ is equidistributed in directions transverse to $Z$.  Recall that $N_{R^{1/2}}(Z) \cap B_j$ is covered by thinner neighborhoods $N_{\rho^{1/2}}(Z+b) \cap B_j$.  The number of these thinner neighborhoods is $\sim\left( \frac{R^{1/2}}{\rho^{1/2}}\right)^{n-m}$.   Because of transverse equidistribution, each of these neighborhoods receives an even share of the $L^2$ norm of $\EP f_{j, trans}$.  In other words, for each $b$,

$$ \| \EP f_{j, trans} \|^2_{L^2(N_{\rho^{1/2}}(Z + b) \cap B_j)} \lesssim \left( \frac{R^{1/2}}{\rho^{1/2}}\right)^{-(n-m)}  \| \EP f_{j, trans} \|^2_{L^2(N_{R^{1/2}}(Z) \cap B_j)}. $$

Using this inequality, we prove the desired estimate for $\| f_{j, trans, b} \|_{L^2}$:

\begin{equation} \label{outequi} \| f_{j, trans, b} \|_{L^2}^2 \lesssim \left( \frac{R^{1/2}}{\rho^{1/2}}\right)^{-(n-m)} \| f_{j, trans} \|_{L^2}^2 \lesssim \left( \frac{R^{1/2}}{\rho^{1/2}}\right)^{-(n-m)} \| f \|_{L^2}^2 . \end{equation}

\subsection{Outline of the paper}
We carry out the proof of Theorem \ref{broadEP} over Sections 3 - 8 of the paper.  Section 3 reviews some standard facts about wave packets.  Section 4 proves some basic properties of the broad ``norms'' $\BLpkA$.  Section 5 contains tools from algebraic geometry (and differential geometry) that we will use to study the geometry of the algebraic varieties that appear in the proofs.  
Section 6 proves the first transverse equidistribution estimate.  For any ball $B(y, \rho) \subset \RR^n$, there is a wave packet decomposition for $\EP f$ on the ball $B(y, \rho)$.   Section 7 is about the relationship between the original wave packet decomposition on the ball $B_R$ and the wave packet decomposition adapted to a smaller ball $B(y, \rho) \subset B_R$.  This lets us state and prove a second version of the transverse equidistribution estimate, corresponding to (\ref{outequi}) above.  
With the background and tools from these sections, we prove Theorem \ref{broadEP} in Section 8.

Following \cite{BG}, Section 9 explains how $k$-broad estimates imply regular $L^p$ estimates of the form $\| \EP f \|_{L^p} \lesssim \| f \|_{L^p}$.  This argument finishes the proof of Theorem \ref{L^pEP}.  Section 10 is an appendix which helps to keep track of the parameters.  Unfortunately, there are quite a few parameters in the paper -- various $\delta$'s, $R$, $K$, $A$, etc.  The appendix lists all the parameters and how they relate to each other.  
Section 11 discusses further directions and open problems.  

\section{Basic setup with wave packets} \label{basicwavepack}

Let $f$ be a function on $B^{n-1}$.  We first break up $f$ into pieces $\ftv$ that are localized in both position and frequency.

Cover $B^{n-1}$ by finitely overlapping balls $\theta$ of radius $R^{-1/2}$.  Let $\psi_\theta$ be a smooth partition of unity adapted to this cover, and write $f = \sum_\theta \psi_\theta f$.

Next we break up $\psi_\theta f$ according to frequency.  Cover $\RR^{n-1}$ by finitely overlapping balls of radius $\sim R^{\frac{1 + \dt}{2}}$, centered at vectors $v \in R^{\frac{1 + \dt}{2}} \ZZ^{n-1}$.  Let $\eta_v$ be a smooth partition of unity adapted to this cover.  We can now write

$$ f = \sum_{\tv} \left( \eta_v (\psi_\theta f)^\wedge \right)^{\vee} = \sum_{\tv} \eta_v^\vee * (\psi_\theta f). $$

Note that $\eta_v^\vee(x)$ is rapidly decaying for $|x| \gtrsim R^{\frac{1 - \dt}{2}}$.  Choose smooth functions $\tilde \psi_\theta$ so that $\tilde \psi_\theta$ is supported on $\theta$, but $\tilde \psi_\theta$ is 1 on a  small neighborhood of the support of $\psi_\theta$.  A bit more precisely, we would like $\tilde \psi_\theta = 1$ on a $c R^{-1/2}$ neighborhood of the support of $\psi_\theta$ for a small constant $c > 0$.  Now we define

$$ \ftv := \tilde \psi_\theta \left[ \eta_v^\vee * (\psi_\theta f) \right]. $$

Because of the rapid decay of $\eta_v^\vee$,

$$ \left|| \ftv - \eta_v^\vee * (\psi_\theta f) \right\|_{L^\infty} \le \RD(R) \| f \|_{L^2}. $$

Therefore, we see that

$$ f = \sum_\tv \ftv + \Err, \textrm{ where } \| \Err \|_{L^\infty} \le \RD(R) \| f \|_{L^2}. $$

Terms of the form $\RD(R) \| f \|_{L^2}$ are negligibly small in terms of all of our estimates.  These rapidly decaying errors will occur from time to time during our arguments.

The functions $\ftv$ are approximately orthogonal.  For any set $\TTT$ of pairs $(\tv)$, we have

\begin{equation} \label{orthog} \left\| \sum_{(\tv) \in \TTT} \ftv \right\|_{L^2}^2 \sim \sum_{(\tv) \in \TTT} \| \ftv \|_{L^2}^2. \end{equation}

The decomposition $f = \sum_{\tv} \ftv$ is useful in this problem because $\EP \ftv$ is localized in space.  
For each $(\tv)$, there is a corresponding tube $\Ttv$, where $\EP \ftv$ is essentially supported.  Let $\omega_\theta$ denote the center of $\theta$.  We define $\Ttv$ by

\begin{equation} \label{defTtv}
\Ttv := \{ (x', x_n) \in B_R \textrm{ so that } |x' + 2 x_n \omega_\theta + v | \le R^{1/2 + \dt} \}.
\end{equation}

\begin{lemma} \label{wavepacket} If $x \in B_R \setminus \Ttv$, then
$$ | \EP \ftv (x) | \le \RD(R) \| f \|_{L^2}. $$
\end{lemma}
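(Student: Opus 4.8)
The plan is to show that for $x\notin\Ttv$, the oscillatory integral defining $\EP\ftv(x)$ has no critical point and hence decays rapidly by non-stationary phase. First I would write out $\EP\ftv(x)$ explicitly. Recall $\ftv=\tilde\psi_\theta\,[\eta_v^\vee*(\psi_\theta f)]$, so its Fourier transform (in the $\omega$ variable) is concentrated, up to $\RD(R)$ errors, in the ball of radius $\sim R^{(1-\dt)/2}$ centered at $v$; equivalently, writing $g:=\eta_v^\vee*(\psi_\theta f)$, we have $\widehat g = \eta_v\,(\psi_\theta f)^\wedge$ supported (essentially) in $B(v,CR^{(1+\dt)/2})$. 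Plugging the definition of $\EP$ into $\EP\ftv(x)$ and expanding $\ftv(\omega) = \tilde\psi_\theta(\omega)\int \widehat g(\xi) e^{i\omega\cdot\xi}\,d\xi$ (a constant multiple thereof), one gets
\begin{equation}
\EP\ftv(x) = c\int\int_{\theta} \tilde\psi_\theta(\omega)\,
e^{ i\left( x'\cdot\omega + x_n|\omega|^2 + \omega\cdot\xi\right)} \widehat g(\xi)\,d\omega\,d\xi + \RD(R)\|f\|_{L^2},
\end{equation}
where $x=(x',x_n)$ and the error absorbs the tail of $\widehat g$ outside $B(v,CR^{(1+\dt)/2})$ and the difference between $\ftv$ and $g$.

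Next I would analyze the $\omega$-integral for fixed $\xi\in B(v,CR^{(1+\dt)/2})$. Its phase is $\phi(\omega) = x'\cdot\omega + x_n|\omega|^2 + \omega\cdot\xi$, with gradient $\nabla_\omega\phi = x' + 2x_n\omega + \xi$. On the support of $\tilde\psi_\theta$ we have $|\omega-\omega_\theta|\lesssim R^{-1/2}$ and $|x_n|\le R$, so $|2x_n\omega - 2x_n\omega_\theta|\lesssim R^{1/2}$; also $|\xi - v|\le CR^{(1+\dt)/2}$. Hence
$$
|\nabla_\omega\phi| \ge |x' + 2x_n\omega_\theta + v| - CR^{1/2} - CR^{(1+\dt)/2}.
$$
When $x\notin\Ttv$, by definition $|x'+2x_n\omega_\theta+v|>R^{1/2+\dt}$, which dominates the error terms (as $\dt$ is a small fixed positive parameter and $(1+\dt)/2<1/2+\dt$ for $\dt$ small — actually $(1+\dt)/2 = 1/2+\dt/2 < 1/2+\dt$). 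So $|\nabla_\omega\phi|\gtrsim |x'+2x_n\omega_\theta+v|\gtrsim R^{1/2+\dt}$ throughout the support. I would then integrate by parts in $\omega$ repeatedly using the operator $L = \frac{\nabla\phi}{i|\nabla\phi|^2}\cdot\nabla$, noting that each integration by parts gains a factor $\lesssim (|\nabla\phi|\cdot R^{-1/2})^{-1}$ (since derivatives of $\tilde\psi_\theta$ cost $R^{1/2}$ and second derivatives of $\phi$ in $\omega$ are $O(|x_n|)=O(R)$, but one checks the net gain per step is $\gtrsim R^{\dt/2}$ or so — this is the standard non-stationary phase bookkeeping). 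After $N$ steps we gain $R^{-N\dt/2}$ (times harmless powers), which for $N$ large beats any power of $R$; summing in $\xi$ over a region of volume $\lesssim R^{(n-1)(1+\dt)/2}$ and using $\|\widehat g\|_\infty \lesssim \|g\|_1 \lesssim R^{O(1)}\|f\|_{L^2}$ (crude bounds suffice), we conclude $|\EP\ftv(x)|\le \RD(R)\|f\|_{L^2}$.

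The main obstacle — or really the only point requiring care — is the bookkeeping in the repeated integration by parts: one must verify that the cutoff $\tilde\psi_\theta$ (whose derivatives are size $R^{1/2}$) together with the Hessian of $\phi$ (size $|x_n|\le R$ in the $\omega\omega$ entries) does not swamp the gain from $|\nabla_\omega\phi|^{-1}\lesssim R^{-1/2-\dt}$ at each step. The key observations are that (i) differentiating the amplitude $\tilde\psi_\theta$ costs $R^{1/2}$ but is then multiplied by $|\nabla\phi|^{-1}\le R^{-1/2-\dt}$, for a net gain $R^{-\dt}$; (ii) differentiating $|\nabla\phi|^{-2}$ brings down $|\nabla^2\phi|\cdot|\nabla\phi|^{-3}\lesssim R\cdot R^{-3(1/2+\dt)} = R^{-1/2-3\dt}$, again a gain; so each integration by parts gains at least $R^{-\dt}$ up to constants, and $N\sim \eps^{-1}\dt^{-1}$ steps give the required rapid decay. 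This is entirely routine once the gradient lower bound $|\nabla_\omega\phi|\gtrsim R^{1/2+\dt}$ on $\mathrm{supp}\,\tilde\psi_\theta$ is established, so I expect the proof to be short.
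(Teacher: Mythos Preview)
Your proposal is correct and takes essentially the same non-stationary phase approach as the paper: both establish the gradient lower bound $|\nabla_\omega\phi|\gtrsim R^{1/2+\dt}$ on the support of the amplitude and then integrate by parts repeatedly. The only cosmetic difference is that the paper avoids your double integral in $(\omega,\xi)$ by writing $\EP\ftv(x)=\int\eta_\theta\,e^{i(x'\cdot\omega+x_n|\omega|^2+v\cdot\omega)}\,(e^{-iv\cdot\omega}\ftv)\,d\omega$ and treating $e^{-iv\cdot\omega}\ftv$ directly as the amplitude, using its frequency localization in $B(0,CR^{1/2+\dt/2})$ to bound $|\partial_\omega^k(e^{-iv\cdot\omega}\ftv)|\lesssim R^{k(1/2+\dt/2)}$---this packages your $\xi$-integral into a single derivative estimate, but the substance is identical.
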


We sketch the proof by stationary phase.  We note that $( e^{- i v \omega} \ftv )$ has Fourier transform essentially supported in $B_{R^{1/2 + \dt/2}}$.  Therefore, we have

$$ \max_\theta | e^{-i v \omega} \ftv | \lesssim R^{(n-1) \frac{\dt}{2} } \Avg_\theta |e^{-i v \omega} \ftv|. $$

Moreover, taking derivatives, we see that

$$ \max_\theta \left| \partial_\omega^k \left( e^{- i v \omega} \ftv \right) \right| \lesssim \left( R^{1/2 + \dt/2} \right)^k R^{(n-1) \frac{\dt}{2} } \Avg_\theta |e^{-i v \omega} \ftv|. $$

Let $\eta_\theta$ be a smooth bump which is equal to 1 on $\theta$.  Since $\theta$ has diameter $R^{-1/2}$, we can estimate the derivatives $| \partial^k \eta_\theta | \lesssim R^{k/2}$.  Now we write $\EP \ftv (x)$ as

\begin{equation} \label{Eftvintparts} \EP \ftv (x) = \int \left[ \eta_\theta e^{i ( x'  \omega + x_n | \omega |^2 + v \omega )} \right] \cdot \left( e^{- i v \omega} \ftv \right) . \end{equation}

We let $\Psi(\omega) := x' \omega + x_n | \omega|^2 - v \omega$.  We note that

$$ \partial_\omega \Psi = x' + 2 x_n \omega + v. $$

If $x \in B_R \setminus \Ttv$, then $| x' + 2 x_n \omega_\theta + v|  = | \partial_\omega \Psi(\omega_\theta)| > R^{1/2 + \dt}$.  We know that $|x_n| \le R$, and so for any $\omega \in \theta$, $| 2 x_n \omega - 2 x_n \omega_\theta| \lesssim R^{1/2}$.  Therefore, for any $\omega \in \theta$,  

$$| \partial_\omega \Psi(\omega) | \gtrsim R^{1/2 + \dt}.$$

By applying integration by parts to (\ref{Eftvintparts}) many times, we see that $| \EP \ftv (x) | \le \RD(R) \| \ftv \|_{L^2}$ as desired.

The tube $\Ttv$ is a cylinder of length $R$ and radius $\sim R^{1/2 + \dt}$.  It points in the direction $G(\omega_\theta)$, where $G(\omega)$ is the unit vector given by

$$ G(\omega) = \frac{ (-2 \omega_1, ..., -2 \omega_{n-1}, 1) }{| (-2 \omega_1, ..., -2 \omega_{n-1}, 1) |}. $$

For each $\omega \in B^{n-1}$, we also define a frequency $\xi(\omega)$.  Based on the formula for $\EP f$, the frequency $\xi(\omega)$ is given by

$$ \xi(\omega) := (\omega_1, ..., \omega_{n-1}, |\omega|^2). $$

We let $\xi(\theta)$ denote the image of $\theta$ under $\xi$:

$$ \xi(\theta) := \{ (\omega_1, ..., \omega_{n-1}, |\omega|^2) | \omega \in \theta \}. $$

In a distributional sense, the Fourier transform of $\EP \ftv$ is supported in $\xi(\theta)$.  Also, if $\eta_R$ denotes a smooth bump on $B_R$ (of height 1), then the Fourier transform of $\eta_R \EP \ftv$ is essentially supported in $N_{R^{-1}} ( \xi(\theta) )$.  

We introduce a little notation.  If $\TTT_\alpha$ is any set of pairs $(\tv)$, then we say that $f$ is concentrated on wave packets from $\TTT_\alpha$ if

$$ f = \sum_{(\tv) \in \TTT_\alpha} \ftv + \RD(R) \| f \|_{L^2}. $$

\noindent Also, for any $f$, and for any set $\TTT_\alpha$, we define

$$ f_\alpha = \sum_{(\tv) \in \TTT_\alpha} \ftv. $$

\subsection{Orthogonality}

For any fixed $x_n$, $\EP f$ restricted to $\RR^{n-1} \times \{ x_n \}$ can be described as an inverse Fourier transform:

$$ \EP f (x_1, ..., x_{n-1}, x_n) = \left(e^{i x_n |\omega|^2} f(\omega) \right)^{\vee} (x_1, ..., x_{n-1}). $$

Applying Plancherel, we get

\begin{equation} \label{orthog1}
\| \EP f \|_{L^2(\RR^{n-1} \times \{ x_n \} ) } = \| f \|_{L^2}.
\end{equation}

We record a couple of simple corollaries of this statement.

\begin{lemma} \label{basicL2} $$ \| \EP f \|_{L^2(B_R)} \lesssim R^{1/2} \| f \|_{L^2}. $$
\end{lemma}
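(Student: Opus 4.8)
The plan is to integrate the slicewise Plancherel identity (\ref{orthog1}) over the last coordinate. First I would observe that $B_R$ is contained in the slab $\RR^{n-1} \times [-R, R]$, so by Fubini's theorem
$$ \| \EP f \|_{L^2(B_R)}^2 \le \int_{-R}^{R} \| \EP f \|_{L^2(\RR^{n-1} \times \{ x_n \})}^2 \, d x_n. $$
Next I would apply (\ref{orthog1}), which asserts that for each fixed $x_n$ the slice norm $\| \EP f \|_{L^2(\RR^{n-1} \times \{ x_n \})}$ equals $\| f \|_{L^2}$, independently of $x_n$. Hence the right-hand side above is exactly $2R \, \| f \|_{L^2}^2$.

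Taking square roots then yields $\| \EP f \|_{L^2(B_R)} \le (2R)^{1/2} \| f \|_{L^2} \lesssim R^{1/2} \| f \|_{L^2}$, which is the claimed bound. The only inputs are the trivial containment of $B_R$ in a slab of width $2R$ in the $x_n$-direction, Fubini's theorem, and the Plancherel identity (\ref{orthog1}) that was just recorded; there is no real obstacle, and in particular one does not need the wave packet structure here at all. (One could equivalently phrase the same computation as applying $TT^*$ and Plancherel directly, but the slicing argument above is the cleanest.)
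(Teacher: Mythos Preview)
Your proof is correct and essentially identical to the paper's own argument: both bound $\|\EP f\|_{L^2(B_R)}^2$ by $\int_{-R}^{R}\|\EP f\|_{L^2(\RR^{n-1}\times\{x_n\})}^2\,dx_n$ via Fubini and then apply the slicewise Plancherel identity (\ref{orthog1}) to get $2R\|f\|_{L^2}^2$.
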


\begin{proof}

$$ \int_{B_R} | \EP f |^2 \le \int_{- R}^R \left( \int_{B^{n-1}(R)} | \EP f |^2 dx' \right) dx_n \le 2 R \| f \|_{L^2}^2. $$

\end{proof}

\begin{lemma} \label{orthogloc} Suppose that $f$ is concentrated on a set of wave packets $\TTT$ and that for every $(\tv) \in \TTT$, $\Ttv \cap (\RR^{n-1} \times \{ x_n \}) \subset B^{n-1}(z_0, r) \times \{ x_n \}$.  Then
\begin{equation} \label{orthog2} \| \EP f \|_{L^2(B^{n-1}(z_0, r) \times \{ x_n \} ) } = \| f \|_{L^2} + \RD(R) \| f \|_{L^2}.
\end{equation}
\end{lemma}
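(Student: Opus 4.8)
The plan is to combine the Plancherel identity (\ref{orthog1}) on the hyperplane $\RR^{n-1}\times\{x_n\}$ with the spatial localization of the individual wave packets provided by Lemma \ref{wavepacket}. By (\ref{orthog1}), $\|\EP f\|_{L^2(\RR^{n-1}\times\{x_n\})} = \|f\|_{L^2}$ exactly, and since $B^{n-1}(z_0,r)\times\{x_n\}$ is a subset of this hyperplane, the bound $\|\EP f\|_{L^2(B^{n-1}(z_0,r)\times\{x_n\})}\le\|f\|_{L^2}$ is immediate. So the only real content is the reverse estimate, which by Pythagoras (splitting the hyperplane into the slab and its complement) is equivalent to the claim that $\|\EP f\|_{L^2((\RR^{n-1}\setminus B^{n-1}(z_0,r))\times\{x_n\})} = \RD(R)\|f\|_{L^2}$; granting this, $\|\EP f\|_{L^2(B^{n-1}(z_0,r)\times\{x_n\})}^2 = \|f\|_{L^2}^2 - (\RD(R)\|f\|_{L^2})^2$, which gives the lemma.

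To prove this tail bound, I would write $f = g + h$ with $g := \sum_{(\tv)\in\TTT}\ftv$ and $h := f - g$. Since $f$ is concentrated on $\TTT$, $\|h\|_{L^\infty(B^{n-1})}\le\RD(R)\|f\|_{L^2}$, hence $\|h\|_{L^2}\le\RD(R)\|f\|_{L^2}$, and applying (\ref{orthog1}) to $h$ shows that $\EP h$ contributes at most $\RD(R)\|f\|_{L^2}$ to the $L^2$ norm over any subset of the hyperplane. For $g$, one uses the triangle inequality to pass to a single wave packet: for each $(\tv)\in\TTT$, the hypothesis together with (\ref{defTtv}) shows that the complement $(\RR^{n-1}\setminus B^{n-1}(z_0,r))\times\{x_n\}$ is disjoint from $\Ttv$. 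On the part of this complement lying in $B_R$ — which has $(n-1)$-dimensional measure $\lesssim R^{n-1}$ — Lemma \ref{wavepacket} gives the pointwise bound $|\EP\ftv|\le\RD(R)\|f\|_{L^2}$, so its contribution is $\RD(R)\|f\|_{L^2}$. On the part lying outside $B_R$, one reruns the non-stationary phase computation from the proof of Lemma \ref{wavepacket}: a nonempty tube $\Ttv$ forces $|v|\lesssim R$, so for $|x'|\gtrsim R$ the phase gradient $\partial_\omega\Psi = x'+2x_n\omega+v$ has magnitude $\gtrsim|x'|$ uniformly for $\omega\in\theta$, and repeated integration by parts yields $|\EP\ftv(x)|\lesssim_N (R^{1/2+\dt/2}/|x'|)^N\|f\|_{L^2}$ for every $N$, which is $L^2$-summable in $x'$ out to infinity with bound $\RD(R)\|f\|_{L^2}$. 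Since the number of pairs $(\tv)\in\TTT$ with $\Ttv\ne\emptyset$ is polynomial in $R$, summing these per-packet bounds over $\TTT$ leaves the total at $\RD(R)\|f\|_{L^2}$.

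The step I expect to be the main obstacle is controlling $\EP g$ on the portion of the hyperplane outside $B_R$: the complement of the slab has infinite measure, so the pointwise decay of Lemma \ref{wavepacket}, which is asserted only on $B_R$, does not on its own bound the $L^2$ tail. The resolution is the geometric observation that a nonempty tube $\Ttv$ satisfies $|v|\lesssim R$ (from (\ref{defTtv})), which keeps the phase gradient $\gtrsim|x'|$ out to infinity so that the non-stationary phase estimate applies on the entire far region; one should also note that in the regime where the lemma is used, $B^{n-1}(z_0,r)\times\{x_n\}$ sits inside $B_R$ with room of order $R^{1/2+\dt}$, so the hypothesis in fact confines each wave packet's cross-section with the hyperplane entirely inside $B_R$ (within $O(R^{1/2+\dt})$ of $B^{n-1}(z_0,r)$), and Lemma \ref{wavepacket} applies everywhere that matters. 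Everything else is routine: (\ref{orthog1}) and the approximate orthogonality (\ref{orthog}) to move between $f$ and $g$, and the polynomial bound on $|\TTT|$ to absorb the sum of rapidly decaying errors.
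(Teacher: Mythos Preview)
Your proposal is correct and follows exactly the natural approach implicit in the paper: Plancherel on the full hyperplane via (\ref{orthog1}) gives the upper bound, and the wave packet localization from Lemma \ref{wavepacket} forces the $L^2$ mass outside $B^{n-1}(z_0,r)\times\{x_n\}$ to be $\RD(R)\|f\|_{L^2}$. The paper does not spell out a proof of this lemma at all, treating it as an immediate consequence of (\ref{orthog1}) and the rapid decay of wave packets off their tubes; your write-up simply makes explicit the details the paper suppresses, including the care needed outside $B_R$.
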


\begin{lemma} \label{orthogloc2} Suppose that $f$ is concentrated on a set of wave packets $\TTT$ and that for every  
$(\tv) \in \TTT$, $\Ttv \cap B(z, r) \not= \emptyset$, for some radius $r \ge R^{1/2 + \dt}$ .  Then

\begin{equation} \label{orthog3} \| \EP f \|^2_{L^2(B(z, 10 r )) } \sim r \| f \|_{L^2}^2.
\end{equation}
\end{lemma}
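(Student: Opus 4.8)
The plan is to establish the two bounds $\|\EP f\|_{L^2(B(z,10r))}^2 \lesssim r\|f\|_{L^2}^2$ and $\|\EP f\|_{L^2(B(z,10r))}^2 \gtrsim r\|f\|_{L^2}^2$ separately, in both cases by slicing $B(z,10r)$ into horizontal hyperplanes $\RR^{n-1}\times\{x_n\}$ and using the exact Plancherel identity (\ref{orthog1}) on each slice. The upper bound requires neither hypothesis: since $B(z,10r)$ lies in the slab $\{|x_n-z_n|\le 10r\}$, Fubini together with (\ref{orthog1}) gives
$$ \int_{B(z,10r)}|\EP f|^2 \;\le\; \int_{|x_n-z_n|\le 10r}\Big(\int_{\RR^{n-1}}|\EP f(x',x_n)|^2\,dx'\Big)dx_n \;=\; 20\,r\,\|f\|_{L^2}^2. $$

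For the lower bound the idea is to feed Lemma \ref{orthogloc} into the same slicing. I will verify the following geometric claim: there are absolute constants $0 < c < 1 < C$ with $C^2 + c^2 < 100$ such that, for every $(\tv)\in\TTT$ and every $x_n$ with $|x_n - z_n|\le cr$, the cross-section $\Ttv\cap(\RR^{n-1}\times\{x_n\})$ is contained in $B^{n-1}(z',Cr)\times\{x_n\}$. Granting this, Lemma \ref{orthogloc} applies on each such slice (with its $z_0=z'$ and radius $Cr$) to give $\|\EP f\|_{L^2(B^{n-1}(z',Cr)\times\{x_n\})} = \|f\|_{L^2} + \RD(R)\|f\|_{L^2}$; and since $C^2+c^2<100$ the product region $\{|x_n-z_n|\le cr\}\times\{|x'-z'|\le Cr\}$ lies inside $B(z,10r)$, so integrating over $|x_n-z_n|\le cr$ yields $\int_{B(z,10r)}|\EP f|^2 \gtrsim r\|f\|_{L^2}^2$ once the rapidly decaying error is discarded.

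To prove the geometric claim, recall from (\ref{defTtv}) that the slice of $\Ttv$ at height $x_n$ is exactly the ball $B^{n-1}\big(-2x_n\omega_\theta - v,\; R^{1/2+\dt}\big)$, whose center depends affinely on $x_n$ with speed $2|\omega_\theta| \lesssim 1$. By hypothesis there is a point $p=(p',p_n)\in\Ttv\cap B(z,r)$, so $|p_n-z_n|\le r$ and $p'$ lies within $R^{1/2+\dt}\le r$ of the slice-center at height $p_n$. Hence for $|x_n-z_n|\le cr$ (so $|x_n-p_n|\le (1+c)r$) the slice-center at height $x_n$ differs from $z'$ by $O(r)$, and adding the radius $R^{1/2+\dt}\le r$ shows the whole slice lies in $B^{n-1}(z',Cr)$ for a suitable absolute $C$; shrinking $c$ then makes $C^2+c^2<100$. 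The only slightly delicate part of the whole argument is this constant-chasing — one must keep track of the drift $2|\omega_\theta|$ (with $|\omega_\theta|\le 1+R^{-1/2}$), the tube radius $R^{1/2+\dt}$, and the input radius $r$, and leave room by restricting the height window to $|x_n-z_n|\le cr$ — but it poses no genuine obstacle. As elsewhere in the paper, I will freely replace $f$ by $\sum_{(\tv)\in\TTT}\ftv$ using (\ref{orthog}) and absorb the $\RD(R)\|f\|_{L^2}$ errors, which are harmless for these $L^2$ estimates.
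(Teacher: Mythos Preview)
Your proof is correct and follows essentially the same approach as the paper: slice $B(z,10r)$ horizontally, verify the geometric claim that every tube's cross-section at each relevant height lies in a fixed $(n-1)$-ball around $z'$ of radius $O(r)$, invoke Lemma \ref{orthogloc} on each slice, and integrate in $x_n$ via Fubini. The paper's version is terser (it uses the height window $|x_n-z_n|\le r$ and the containing ball $B(z,5r)$ without explicitly separating the upper and lower bounds or tracking the constants $c,C$), but the argument is the same.
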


\begin{proof} For each $x_n$ in the range $z_n - r \le x_n \le z_n + r$, and for each $(\tv) \in \TTT$, the intersection $\Ttv \cap \RR^{n-1} \times \{ x_n \}$ is contained in $B(z, 5 r)$.  By the last lemma, we see that

\begin{equation*} 
\| \EP f \|_{L^2(B(z, 5 r ) \cap \RR^{n-1} \times \{ x_n \} ) } = \| f \|_{L^2} + \RD(R) \| f \|_{L^2}.
\end{equation*}

Applying Fubini, we get the desired bound.   \end{proof}

\section{Properties of the broad ``norms'' $\BLpkA$}

We recall the definition of the $k$-broad ``norm'' $\BLpkA$.  Although $\BLpkA$ is not literally a norm, it obeys a version of the triangle inequality and a version of Holder's inequality.  These nice algebraic features helped to motivate this particular definition.

Let $B^{n-1}$ be a disjoint union of (approximate) balls $\tau$ of radius $K^{-1}$ .   For each $\tau$, we define $G(\tau)$ to be the image of $\tau$ under the direction map $G$.  If $\omega_\tau$ is the center of $\tau$, then $G(\tau)$ is essentially a ball of radius $K^{-1}$ around $G(\omega_\tau)$ in $S^{n-1}$.  If $V \subset \RR^n$ is a subspace, then we write $\Angle( G(\tau), V)$ for the smallest angle between any non-zero vectors $v \in V$ and $v' \in G(\tau)$.

For any ball $B_{K^2}$ of radius $K^2$ in $B_R$, we define $\mu_{\EP f}$ as in (\ref{defmuEf}).  

\begin{equation*} \mu_{\EP f} (B_{K^2}) := \min_{V_1, ..., V_A \textrm{ $(k-1)$-subspaces of } \RR^n} \left( \max_{\tau: \Angle(G(\tau), V_a) > K^{-1} \textrm{ for all } a } \int_{B_{K^2}} | \EP f_\tau|^p \right). \end{equation*}

Because this expression is a little long, we abbreviate it as

\begin{equation*} \mu_{\EP f} (B_{K^2}) := \min_{V_1, ..., V_A} \left( \max_{\tau \notin V_a} \int_{B_{K^2}} | \EP f_\tau|^p \right). \end{equation*}

We remark that it is convenient to allow $A = 0$.  If $A = 0$, then we have simply $\mu_{\EP f} (B_{K^2}) = \max_{\tau} \int_{B_{K^2}} | \EP f_\tau|^p$.  

If $U \subset B_R$ is a finite union of balls $B_{K^2}$, then we define $\| \EP f \|_{L^p(U)}$ by:

\begin{equation} \label{defBroadL^pU}
\| \EP f \|_{\BLpkA(U)}^p := \sum_{B_{K^2} \subset U} \mu_{\EP f} (B_{K^2}) . 
\end{equation}

The $k$-broad ``norm'' obeys a weak version of the triangle inequality.

\begin{lemma} \label{triangleineq} Suppose that $f = g+h$ and suppose that $A = A_1 + A_2$, where $A, A_i$ are non-negative integers.  Then

$$ \| \EP f \|_{\BLpkA(U)} \lesssim \| \EP g \|_{\BLp_{k, A_1} (U)} + \| \EP h \|_{\BLp_{k, A_2} (U)}. $$

\end{lemma}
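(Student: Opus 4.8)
The plan is to prove the weak triangle inequality pointwise at the level of the measures $\mu_{\EP f}(B_{K^2})$ on each small ball, and then sum. Fix a ball $B_{K^2} \subset U$. The key observation is that $(\EP f)_\tau = \EP f_\tau = \EP g_\tau + \EP h_\tau$, so for each cap $\tau$ we have the pointwise bound $|\EP f_\tau| \le |\EP g_\tau| + |\EP h_\tau|$, hence, after raising to the $p$-th power and using $(a+b)^p \le 2^p(a^p+b^p)$ (or $\le 2^{p-1}(a^p+b^p)$), $$\int_{B_{K^2}} |\EP f_\tau|^p \lesssim \int_{B_{K^2}} |\EP g_\tau|^p + \int_{B_{K^2}} |\EP h_\tau|^p.$$ That is the only analytic input; everything else is bookkeeping about the $\min$–$\max$ structure.

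**Next I would** handle the combinatorics of the subspaces. Choose $(k-1)$-subspaces $V_1,\dots,V_{A_1}$ achieving (or nearly achieving) the minimum in the definition of $\mu_{\EP g}(B_{K^2})$, and $V_{A_1+1},\dots,V_{A_1+A_2}$ achieving the minimum in $\mu_{\EP h}(B_{K^2})$. Together these are $A = A_1 + A_2$ subspaces, so they form an admissible choice in the definition of $\mu_{\EP f}(B_{K^2})$. Therefore $$\mu_{\EP f}(B_{K^2}) \le \max_{\tau : \Angle(G(\tau),V_a) > K^{-1}\,\forall a}\ \int_{B_{K^2}} |\EP f_\tau|^p.$$ For any $\tau$ appearing in this last max, $\Angle(G(\tau),V_a) > K^{-1}$ holds in particular for all $a \le A_1$ and for all $A_1 < a \le A$. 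Hence by the pointwise bound, $$\int_{B_{K^2}} |\EP f_\tau|^p \lesssim \max_{\tau' \notin V_1,\dots,V_{A_1}} \int_{B_{K^2}} |\EP g_{\tau'}|^p + \max_{\tau'' \notin V_{A_1+1},\dots,V_A} \int_{B_{K^2}} |\EP h_{\tau''}|^p = \mu_{\EP g}(B_{K^2}) + \mu_{\EP h}(B_{K^2}),$$ using that $V_1,\dots,V_{A_1}$ was the minimizing family for $g$ and $V_{A_1+1},\dots,V_A$ the minimizing family for $h$. Taking the max over the relevant $\tau$ on the left preserves the inequality, so $\mu_{\EP f}(B_{K^2}) \lesssim \mu_{\EP g}(B_{K^2}) + \mu_{\EP h}(B_{K^2})$ for every $B_{K^2} \subset U$.

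**Finally I would** sum over $B_{K^2} \subset U$ to get $\| \EP f \|_{\BLpkA(U)}^p \lesssim \| \EP g \|_{\BLp_{k,A_1}(U)}^p + \| \EP h \|_{\BLp_{k,A_2}(U)}^p$, and then take $p$-th roots, absorbing the constant, to obtain the stated form $\| \EP f \|_{\BLpkA(U)} \lesssim \| \EP g \|_{\BLp_{k,A_1}(U)} + \| \EP h \|_{\BLp_{k,A_2}(U)}$ (using $(x+y)^{1/p} \le x^{1/p}+y^{1/p}$ for $p \ge 1$). The honest point to be careful about — and really the only subtlety — is that the $\min$ in the definition may not be attained, since it ranges over all $(k-1)$-subspaces; I would handle this either by noting the set of $A$-tuples of subspaces is compact (a product of Grassmannians) so the continuous function being minimized does attain its min, or, more cheaply, by taking families within an arbitrary $\eta>0$ of the infimum and letting $\eta \to 0$ at the end. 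I do not expect a genuine obstacle here; this lemma is purely formal, and the main thing is to state the subspace-splitting argument cleanly.
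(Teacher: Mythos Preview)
Your proof is correct and follows essentially the same approach as the paper: both use the pointwise bound $\int_{B_{K^2}}|\EP f_\tau|^p \lesssim \int_{B_{K^2}}|\EP g_\tau|^p + \int_{B_{K^2}}|\EP h_\tau|^p$ and then split the $A$ subspaces into a group of $A_1$ for $g$ and $A_2$ for $h$ before summing over balls. Your remark about attainment of the minimum (via compactness of the Grassmannian or an $\eta$-approximation) is a valid clarification that the paper leaves implicit.
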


\begin{proof} We expand

$$ \| \EP f \|_{\BLpkA(U)}^p = \sum_{B_{K^2} \subset U} \min_{V_1, ..., V_A} \left( \max_{\tau \notin V_a } \int_{B_{K^2}} | \EP f_\tau|^p \right).$$

Now for each ball $B_{K^2} \subset U$, we have 

$$ \min_{V_1, ..., V_A} \left( \max_{\tau \notin V_a } \int_{B_{K^2}} | \EP f_\tau|^p \right) \lesssim \min_{V_1, ..., V_A} \left( \max_{\tau \notin V_a } \left[ \int_{B_{K^2}} | \EP g_\tau|^p + \int_{B_{K^2}} | \EP h_\tau|^p \right] \right) $$

$$\le \min_{V_1, ..., V_{A_1}} \left( \max_{\tau \notin V_a, 1 \le a \le A_1 } \int_{B_{K^2}} | \EP g_\tau|^p \right) + \min_{V_{A_1 + 1}, ..., V_{A}} \left( \max_{\tau \notin V_a, A_1 + 1 \le a \le A }  \int_{B_{K^2}} | \EP h_\tau|^p \right). $$

Summing over all $B_{K^2} \subset U$, we get

$$  \| \EP f \|_{\BLpkA(U)}^p \lesssim  \| \EP g \|_{\BLp_{k,A_1} (U)}^p  +  \| \EP h \|_{\BLp_{k, A_2} (U)}^p. $$

\end{proof}

The reason that we need a large value of $A$ in Theorem \ref{broadEP} is that we will need to use this triangle inequality many times.  If $A=1$, $\BLp_{k,1} $ does not obey a good triangle inequality.  But if we start with $A$ a large constant, we can use Lemma \ref{triangleineq} many times.  In effect, $\BLpkA$ behaves like a norm as long as we only use the triangle inequality $O_\eps(1)$ times in our argument, and as long as we choose $A = A(\eps)$ large enough.

$\BLpkA$ also obeys a version of (a corollary of) Holder's inequality.  

\begin{lemma} \label{broadholder} Suppose that $1 \le p, p_1, p_2 < \infty$ and $0 \le \alpha_1, \alpha_2 \le 1$ obey $\alpha_1 + \alpha_2 = 1$ and 

$$ \frac{1}{p} = \alpha_1 \frac{1}{p_1} + \alpha_2 \frac{1}{p_2}. $$

Also suppose that $A = A_1 + A_2$.  Then

$$ \| \EP f \|_{\BLpkA(U)} \le \| \EP f \|_{\BL^{p_1}_{k, A_1} (U)}^{\alpha_1} \| \EP f \|_{\BL^{p_2}_{k, A_2} (U)}^{\alpha_2}. $$

\end{lemma}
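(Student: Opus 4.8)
The plan is to reduce the inequality to a ballwise statement and then apply the ordinary three-line (Hölder) inequality on each ball $B_{K^2}$, being careful about how the subspaces $V_a$ split between the two factors. First I would unwind the definitions: $\| \EP f \|_{\BLpkA(U)}^p = \sum_{B_{K^2} \subset U} \mu_{\EP f}(B_{K^2})$, and similarly for the two factors on the right with exponents $p_1, p_2$ and parameters $A_1, A_2$. So it suffices to prove, for each fixed ball $B = B_{K^2} \subset U$, the pointwise-in-$B$ bound
$$ \mu_{\EP f}(B) \le \left( \min_{V_1, \dots, V_{A_1}} \max_{\tau \notin V_a} \int_B |\EP f_\tau|^{p_1} \right)^{\alpha_1 p / p_1} \left( \min_{V_{A_1+1}, \dots, V_{A}} \max_{\tau \notin V_a} \int_B |\EP f_\tau|^{p_2} \right)^{\alpha_2 p / p_2}, $$
and then sum over $B$ using that $\sum_B x_B^{\alpha_1 p/p_1} y_B^{\alpha_2 p/p_2} \le (\sum_B x_B)^{\alpha_1 p/p_1} (\sum_B y_B)^{\alpha_2 p/p_2}$, which is itself Hölder with exponents $p_1/(\alpha_1 p)$ and $p_2/(\alpha_2 p)$ (these are conjugate exactly because $\frac1p = \frac{\alpha_1}{p_1} + \frac{\alpha_2}{p_2}$ and $\alpha_1 + \alpha_2 = 1$).

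For the ballwise bound, fix $B$. Let $V_1, \dots, V_{A_1}$ achieve (or nearly achieve) the minimum in the $\BL^{p_1}_{k,A_1}$ expression on $B$, and let $V_{A_1+1}, \dots, V_A$ achieve the minimum in the $\BL^{p_2}_{k,A_2}$ expression on $B$. Together these give a collection of $A = A_1 + A_2$ many $(k-1)$-subspaces, so by definition of $\mu_{\EP f}(B)$ as a minimum over all such collections,
$$ \mu_{\EP f}(B) \le \max_{\tau : \Angle(G(\tau), V_a) > K^{-1} \,\forall a} \int_B |\EP f_\tau|^p. $$
Let $\tau^*$ be a maximizing cap; in particular $\tau^*$ avoids every one of the $A_1$ subspaces from the first family and every one of the $A_2$ subspaces from the second. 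Now apply the ordinary three-line inequality to the single function $\EP f_{\tau^*}$ on the ball $B$: since $\frac1p = \frac{\alpha_1}{p_1} + \frac{\alpha_2}{p_2}$,
$$ \int_B |\EP f_{\tau^*}|^p \le \left( \int_B |\EP f_{\tau^*}|^{p_1} \right)^{\alpha_1 p/p_1} \left( \int_B |\EP f_{\tau^*}|^{p_2} \right)^{\alpha_2 p/p_2}. $$
Because $\tau^*$ avoids $V_1, \dots, V_{A_1}$, the first factor is $\le \max_{\tau \notin V_a, 1 \le a \le A_1} \int_B |\EP f_\tau|^{p_1}$, which by our choice of the $V_a$ equals (up to the approximation in choosing near-minimizers) the $\BL^{p_1}_{k,A_1}$-ball-quantity; similarly the second factor is controlled by the $\BL^{p_2}_{k,A_2}$-ball-quantity. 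This is exactly the ballwise bound claimed above.

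I expect the only real subtlety — and the step to write carefully — to be the bookkeeping of which subspaces constrain which $\tau$: one must make sure that the single cap $\tau^*$ extracted from the combined family of $A$ subspaces is simultaneously an admissible competitor for both the $p_1$-max (using only the first $A_1$ subspaces) and the $p_2$-max (using only the last $A_2$ subspaces). This is immediate from $A = A_1 + A_2$ and the fact that avoiding all $A$ subspaces is stronger than avoiding any sub-family, but it is the crux of why the parameter $A$ must add up. Everything else is two applications of Hölder's inequality — once on each ball for the integrals, once for the sum over balls — and the arithmetic of the exponents, which works out precisely under the stated hypotheses $\alpha_1 + \alpha_2 = 1$ and $\frac1p = \alpha_1\frac1{p_1} + \alpha_2\frac1{p_2}$. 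Note the inequality comes out with no implicit constant, matching the statement (the $\le$ rather than $\lesssim$), as long as we are willing to pass to exact minimizers or absorb an $\eps$-loss into the choice of near-minimizers and then let it go to zero.
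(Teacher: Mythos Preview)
Your proof is correct and follows essentially the same approach as the paper: two applications of H\"older (once on the inner integral $\int_B |\EP f_\tau|^p$, once on the outer sum over balls $B_{K^2}$), together with the observation that the $A$ subspaces can be split as $A_1 + A_2$ so that a cap admissible for the full family is automatically admissible for each subfamily. The only cosmetic difference is that you fix optimal subspaces and a maximizing cap $\tau^*$ explicitly, whereas the paper manipulates the $\min$ and $\max$ symbolically; these are equivalent presentations of the same argument. (Your remark about near-minimizers is unnecessary: the minimum is over a compact Grassmannian and the quantity is continuous, so exact minimizers exist.)
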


\begin{proof} The left-hand side is

$$ \left[ \sum_{B_{K^2} \subset U} \min_{V_1, ... V_A}  \max_{\tau \notin V_a} \int_{B_{K^2}} | \EP f_\tau|^p    \right]^{\frac{1}{p}}. $$

Applying the regular Holder inequality to the inner integral, this expression is

$$\le \left[ \sum_{B_{K^2} \subset U} \min_{V_1, ... V_A}  \max_{\tau \notin V_a} \left( \int_{B_{K^2}} | \EP f_\tau|^{p_1} \right)^{\alpha_1 \frac{p}{p_1}} \left( \int_{B_{K^2}} | \EP f_\tau|^{p_2} \right)^{\alpha_2 \frac{p}{p_2}}     \right]^{\frac{1}{p}}. $$

We can bring the maximum over $\tau$ inside, so the last expression is

$$ \le  \left[ \sum_{B_{K^2} \subset U} \min_{V_1, ... V_A} \left(  \max_{\tau \notin V_a}  \int_{B_{K^2}} | \EP f_\tau|^{p_1} \right)^{\alpha_1 \frac{p}{p_1}} \left( \max_{\tau \notin V_a}  \int_{B_{K^2}} | \EP f_\tau|^{p_2} \right)^{\alpha_2 \frac{p}{p_2}}     \right]^{\frac{1}{p}}. $$

Now we cannot bring the minimum inside the parentheses.  But we can split $V_1, ..., V_A$ into $V_1, ..., V_{A_1}$ and $V_{A_1 + 1}, ..., V_{A}$.  If we weaken the first condition $\tau \notin V_1 ... V_A$ to $\tau \notin V_1, ..., V_{A_1}$, and if we weaken the second condition $\tau \notin V_1, ..., V_A$ to $\tau \notin V_{A_1 + 1}, ..., V_A$, then we see that the last expression is bounded by

$$ \le  \left[ \sum_{B_{K^2} \subset U} \left( \min_{V_1, ... V_{A_1}} \max_{\tau \notin V_a, 1 \le a \le A_1}  \int_{B_{K^2}} | \EP f_\tau|^{p_1} \right)^{\alpha_1 \frac{p}{p_1}} \left(\min_{V_{A_1+1}, ... V_A} \max_{\tau \notin V_a, A_1+1 \le a \le A}  \int_{B_{K^2}} | \EP f_\tau|^{p_2} \right)^{\alpha_2 \frac{p}{p_2}}     \right]^{\frac{1}{p}}. $$

Now we apply Holder to the initial sum over $B_{K^2} \subset U$, and we get

$$ \le \left[   \sum_{B_{K^2} \subset U} \min_{V_1, ... V_{A_1}} \max_{\tau \notin V_a, 1 \le a \le A_1}  \int_{B_{K^2}} | \EP f_\tau|^{p_1}  \right]^{\frac{\alpha_1}{p_1}} \left[   \sum_{B_{K^2} \subset U} \min_{V_{A_1 + 1}, ... V_{A}} \max_{\tau \notin V_a, A_1 + 1 \le a \le A}  \int_{B_{K^2}} | \EP f_\tau|^{p_2}  \right]^{\frac{\alpha_2}{p_2}}$$

$$ = \| \EP f \|_{\BL^{p_1}_{k, A_1} (U)}^{\alpha_1} \| \EP f \|_{\BL^{p_2}_{k, A_2} (U)}^{\alpha_2}. $$

\end{proof}

\section{Tools from algebraic geometry}

\subsection{Transverse complete intersections} Over the course of our argument we will work not just with algebraic hypersurfaces but algebraic varieties of all dimensions.  We write $Z(P_1, ..., P_{n-m})$ for the set of common zeroes of the polynomials $P_1, ..., P_{n-m}$.  Throughout the paper, we will work with a nice class of varieties called transverse complete intersections.  The variety $Z(P_1, ..., P_{n-m})$ is a transverse complete intersection if

\begin{equation} \label{deftci} \nabla P_1(x) \wedge ... \wedge \nabla P_{n-m}(x) \not= 0 \textrm{ for all } x \in Z(P_1, ..., P_{n-m}). \end{equation}

By the implicit function theorem, a transverse complete intersection $Z(P_1, ..., P_{n-m})$ is a smooth $m$-dimensional manifold.  Because of Sard's theorem, there are lots of transverse complete intersections.  Here is a lemma making this precise.

\begin{lemma} \label{sardtci} If $P$ is a polynomial on $\RR^n$, then for almost every $c_0 \in \RR$, $Z( P + c_0)$ is a transverse complete intersection.

More generally, suppose that $Z(P_1, ..., P_{n-m})$ is a transverse complete intersection and that $P$ is another polynomial.  Then for almost every $c_0 \in \RR$, $Z(P_1, ..., P_{n-m}, P + c_0)$ is a transverse complete intersection.
\end{lemma}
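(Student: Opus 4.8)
The plan is to reduce both statements to Sard's theorem applied to a suitable restriction map. For the first statement, consider $P$ as a map $\RR^n \to \RR$. By Sard's theorem, almost every $c_0 \in \RR$ is a regular value of $-P$, i.e.\ for almost every $c_0$, every point $x$ with $P(x) = -c_0$ has $\nabla P(x) \neq 0$. But the set of such $x$ is exactly $Z(P + c_0)$, and the condition $\nabla P(x) \neq 0$ is precisely the transverse complete intersection condition (\ref{deftci}) in the case $n - m = 1$. So the first statement is immediate.

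For the general statement, first I would invoke the implicit function theorem: since $Z := Z(P_1, \dots, P_{n-m})$ is a transverse complete intersection, it is a smooth $m$-dimensional submanifold of $\RR^n$. The natural thing to do is restrict $P$ to $Z$, obtaining a smooth function $P|_Z \colon Z \to \RR$, and apply Sard's theorem to $P|_Z$: for almost every $c_0$, the value $-c_0$ is a regular value of $P|_Z$. I then need to check that ``$-c_0$ regular value of $P|_Z$'' implies ``$Z(P_1, \dots, P_{n-m}, P + c_0)$ is a transverse complete intersection.'' Fix a point $x \in Z(P_1, \dots, P_{n-m}, P + c_0)$; in particular $x \in Z$ and $(P|_Z)(x) = -c_0$. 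Regularity of the value means the differential $d(P|_Z)_x \colon T_x Z \to \RR$ is surjective, i.e.\ there is a vector $w \in T_x Z$ with $\langle \nabla P(x), w\rangle \neq 0$. Now $T_x Z$ is the orthogonal complement of $\mathrm{span}(\nabla P_1(x), \dots, \nabla P_{n-m}(x))$ (using the transverse complete intersection hypothesis for $Z$), so $w \perp \nabla P_i(x)$ for all $i$, while $w$ is not perpendicular to $\nabla P(x)$. This forces $\nabla P(x) \notin \mathrm{span}(\nabla P_1(x), \dots, \nabla P_{n-m}(x))$, and since the latter span is $(n-m)$-dimensional by hypothesis, we conclude $\nabla P_1(x) \wedge \dots \wedge \nabla P_{n-m}(x) \wedge \nabla P(x) \neq 0$. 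That is exactly (\ref{deftci}) for the enlarged family, so $Z(P_1, \dots, P_{n-m}, P + c_0)$ is a transverse complete intersection.

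The one genuine subtlety — and the step I expect to require the most care — is that $Z$ need not be compact, and Sard's theorem is usually stated for smooth maps between manifolds where the source may be noncompact but the map is $C^\infty$; this is fine since $P|_Z$ is the restriction of a polynomial, hence $C^\infty$, and $Z$ is a second-countable manifold, so the standard Sard theorem applies without compactness. A second minor point is that $Z$ could be disconnected or even have varying ``apparent'' structure, but this causes no difficulty since the transverse complete intersection condition and the regular-value condition are both pointwise, so we simply work component by component (or directly on all of $Z$ at once). No estimates or computations beyond linear algebra at a point are needed; the content is entirely Sard's theorem plus the identification of $T_x Z$ with $\mathrm{span}(\nabla P_i(x))^\perp$.
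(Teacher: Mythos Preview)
Your proof is correct and follows essentially the same approach as the paper: apply Sard's theorem to $P$ (respectively to $P|_Z$), and translate ``$-c_0$ is a regular value'' into the wedge-product nonvanishing condition. The paper's argument is slightly terser at the linear-algebra step (it simply asserts that $d(P|_Z)_x \neq 0$ is equivalent to $\nabla P_1(x)\wedge\cdots\wedge\nabla P_{n-m}(x)\wedge\nabla P(x)\neq 0$), whereas you spell out the mediating vector $w\in T_xZ$; and the paper does not comment on the noncompactness issue you flag, but otherwise the two proofs are the same.
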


\begin{proof} We begin with the first case.  We know that $P: \RR^n \rightarrow \RR$ is a smooth function, and so by Sard's theorem, almost every $y \in \RR$ is a regular value for $P$.  But if $- c_0$ is a regular value for $P$, then $\nabla P(x) \not= 0$ whenever $P(x) + c_0 = 0$.

The general case is similar.  We know that $Z  = Z(P_1, ..., P_{n-m})$ is a smooth $m$-dimensional manifold, and $P: Z \rightarrow \RR$ is a smooth function.  By Sard's theorem, almost every $y \in \RR$ is a regular value of the map $P: Z \rightarrow \RR$.  If $x \in Z$ and $P(x)$ is a regular value, then $dP_x \not= 0$, where $dP : T_x Z \rightarrow T_{P(x)} \RR$.  In terms of $\nabla P(x)$, this means that

$$ \nabla P_1(x) \wedge ... \wedge \nabla P_{n-m}(x) \wedge \nabla P(x) \not= 0. $$

\noindent So if $-c_0$ is a regular value for $P: Z \rightarrow \RR$, then $Z(P_1, ..., P_{n-m}, P+c_0)$ is a transverse complete intersection.  
\end{proof}

\subsection{Polynomial partitioning} Polynomial partitioning is a key tool in our arguments.  Our presentation here is a minor variation on the polynomial partitioning result from \cite{GK}.  We begin by stating a partitioning result from \cite{Gu4}:

\begin{theorem} \label{polypartold} (Theorem 1.4 in \cite{Gu4}) Suppose that $W \ge 0$ is a (non-zero) $L^1$ function on $\RR^n$.  Then for each $D$ there a non-zero polynomial $P$ of degree at most $D$ so that $\RR^n \setminus Z(P)$ is a union of $\sim D^n$ disjoint open sets $O_i$, and the integrals $\int_{O_i} W$ are all equal.
\end{theorem}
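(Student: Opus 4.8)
The plan is to derive this by iterating the polynomial ham sandwich theorem, in the style of Guth--Katz \cite{GK}. The key input is the following form of the ham sandwich theorem, a consequence of the Borsuk--Ulam theorem applied after the degree-$d$ Veronese embedding of $\RR^n$: if $\mu_1, \dots, \mu_N$ are finite Borel measures on $\RR^n$ and $N \le \binom{n+d}{n} - 1$, then there is a non-zero polynomial $Q$ of degree at most $d$ with $\mu_i(\{Q > 0\}) = \mu_i(\{Q < 0\})$ for every $i$. I would apply this only to measures of the form $W \mathbf{1}_E\,dx$; since a non-zero polynomial has Lebesgue-null zero set, such a bisection in fact gives $\int_{E \cap \{Q > 0\}} W = \int_{E \cap \{Q < 0\}} W = \tfrac12 \int_E W$.

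Next I would build $P$ by dyadic bisection. Start at stage $0$ with the single cell $\RR^n$. Inductively, suppose that after stage $j-1$ we have non-zero polynomials $Q_1, \dots, Q_{j-1}$ such that the $2^{j-1}$ sign-condition sets $O_\sigma := \{x : \operatorname{sgn} Q_i(x) = \sigma_i \textrm{ for } 1 \le i \le j-1 \}$, indexed by $\sigma \in \{+,-\}^{j-1}$, are non-empty, open, and satisfy $\int_{O_\sigma} W = 2^{-(j-1)} \int W$. Let $d_j$ be the least integer with $\binom{n+d_j}{n} - 1 \ge 2^{j-1}$, and apply the ham sandwich theorem to the $2^{j-1}$ measures $W \mathbf{1}_{O_\sigma}\,dx$ to get a non-zero $Q_j$ of degree $\le d_j$ bisecting all of them. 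Then each $O_\sigma$ splits into $O_\sigma \cap \{Q_j > 0\}$ and $O_\sigma \cap \{Q_j < 0\}$, each open and each carrying exactly half the mass of $O_\sigma$, hence non-empty; these are the $2^j$ sign-condition sets of $Q_1, \dots, Q_j$, so stage $j$ is complete.

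After $s$ stages I set $P := Q_1 \cdots Q_s$. It is non-zero of degree $\sum_{j=1}^s d_j$, its zero set $Z(P) = \bigcup_j Z(Q_j)$ is Lebesgue-null, and $\RR^n \setminus Z(P)$ is exactly the disjoint union of the $2^s$ open cells $O_\sigma$, each with $\int_{O_\sigma} W = 2^{-s} \int W$; these masses are all equal, as required (that every cell of the partition is one of these $O_\sigma$, with none left over, is built into the induction). It remains to pick $s$ with $\deg P \le D$ and $2^s \sim D^n$. Standard binomial estimates, using $\tfrac{d^n}{n!} \le \binom{n+d}{n} \le (d+1)^n$, show that the least $d_j$ above satisfies $c_n 2^{j/n} \le d_j \le C_n 2^{j/n}$; hence $\deg P = \sum_{j=1}^s d_j$ has geometric-type growth of ratio $2^{1/n} > 1$ and is comparable to its last term, $\deg P \sim_n 2^{s/n}$. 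Choosing $s$ maximal with $\sum_{j=1}^s d_j \le D$ gives $\deg P \le D$; maximality forces $D < \sum_{j=1}^{s+1} d_j \lesssim_n 2^{s/n}$, so $2^s \gtrsim_n D^n$, while $D \ge \deg P \ge d_s \gtrsim_n 2^{s/n}$ gives $2^s \lesssim_n D^n$. Thus the number of cells is $2^s \sim D^n$.

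The one step I expect to require care is this degree accounting. At first glance, running the bisection for the necessary $s \sim n \log_2 D$ stages and paying degree $\sim 2^{j/n}$ at stage $j$ might seem to cost a total degree far larger than $D$; the point that makes it work is that the per-stage degrees form a geometric progression, so the total is governed by the final stage alone, where the degree is $\sim 2^{s/n} \sim D$. Everything else --- the openness and disjointness of the cells, the equality of their $W$-masses, and the fact that $W$ assigns no mass to $Z(P)$ --- is routine once the ham sandwich theorem is available.
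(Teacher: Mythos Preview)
Your proposal is correct and follows essentially the same approach as the paper: iterate the polynomial ham sandwich theorem (Theorem~\ref{polyham}) to obtain bisecting polynomials $Q_j$ of degree $\lesssim 2^{j/n}$, take $P = \prod_j Q_j$, and use the geometric growth of the degrees to control $\deg P$. The paper does not spell out the degree accounting in the detail you do, but the underlying argument is identical to the Guth--Katz iteration you describe.
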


We want to use this result, but we need to upgrade it in a minor way.  Because we want all the varieties that appear in our argument to be transverse complete intersections, we need to be able to perturb $P$ a little bit.  In order to understand this issue, we need to review some of the proof of Theorem \ref{polypartold}.  The proof is based on the polynomial ham sandwich theorem, which is due to Stone and Tukey \cite{ST}.  Here is a version of the theorem which is convenient for our purposes:

\begin{theorem} \label{polyham}  (Polynomial ham sandwich theorem, cf. Corollary 1.2 in \cite{Gu4}) If $W_1, ..., W_N$ are $L^1$-functions on $\RR^n$, then there exists a non-zero polynomial $P$ of degree $\le C_n N^{1/n}$ so that for each $W_j$,

$$ \int_{\{ P > 0 \} } W_j = \int_{ \{ P < 0 \} } W_j. $$

\end{theorem}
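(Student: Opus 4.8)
The plan is to prove the polynomial ham sandwich theorem via the classical Stone--Tukey device: lift the bisection problem to a high-dimensional sphere, where it becomes a fixed-point statement that follows from the Borsuk--Ulam theorem.

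First I would choose the degree. Let $V_d$ be the real vector space of polynomials on $\RR^n$ of degree at most $d$, so $\dim V_d = \binom{n+d}{n}$. Since $\binom{n+d}{n}$ grows like $d^n/n!$, I can pick $d \le C_n N^{1/n}$ (with $C_n$ depending only on $n$) so that $\dim V_d \ge N+1$; set $D := \dim V_d - 1 \ge N$. Identifying each polynomial with its vector of coefficients and fixing any Euclidean norm on this coefficient space, the unit sphere in $V_d$ is a copy of $S^D$, and crucially \emph{every point of $S^D$ is a nonzero polynomial}. Next I would define a map $F \colon S^D \to \RR^N$ by
$$ F(P) := \Bigl( \int_{\{P>0\}} W_1 - \int_{\{P<0\}} W_1,\ \ldots,\ \int_{\{P>0\}} W_N - \int_{\{P<0\}} W_N \Bigr). $$
Because $\{-P>0\} = \{P<0\}$, the map $F$ is odd: $F(-P) = -F(P)$ for all $P \in S^D$.

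The step I expect to require the most care is continuity of $F$. Here I would use the fact that for a nonzero polynomial $P$ the zero set $Z(P)$ is a proper real algebraic subvariety of $\RR^n$ and hence has Lebesgue measure zero; since each $W_j \in L^1(\RR^n)$, this gives $\int_{Z(P)}|W_j| = 0$, so each coordinate of $F(P)$ equals $2\int_{\{P>0\}} W_j - \int_{\RR^n} W_j$. If $P_k \to P$ in $S^D$, then $P_k \to P$ uniformly on compact subsets of $\RR^n$, hence $\mathbf{1}_{\{P_k>0\}}(x) \to \mathbf{1}_{\{P>0\}}(x)$ for every $x$ with $P(x) \ne 0$, i.e.\ Lebesgue-a.e.; dominated convergence with dominating function $|W_j|$ then yields $\int_{\{P_k>0\}} W_j \to \int_{\{P>0\}} W_j$. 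Thus $F$ is continuous.

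Finally I would invoke the Borsuk--Ulam theorem: a continuous odd map $S^D \to \RR^N$ with $D \ge N$ must have a zero, since otherwise $F/|F|$ would be a continuous odd map $S^D \to S^{N-1}$, which is impossible when $D \ge N$. Let $P \in S^D$ be a zero of $F$. Then $P$ is a nonzero polynomial of degree at most $d \le C_n N^{1/n}$, and $\int_{\{P>0\}} W_j = \int_{\{P<0\}} W_j$ for every $j$, which is exactly the assertion. (If the $W_j$ are allowed to be complex-valued, run the argument with $2N$ functions, the real and imaginary parts.) Apart from the continuity argument, which the measure-zero property of real algebraic hypersurfaces handles cleanly, every step is routine bookkeeping.
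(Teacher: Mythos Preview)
Your proof is correct and is exactly the standard Stone--Tukey/Borsuk--Ulam argument. The paper does not actually prove this theorem; it is stated with a citation to \cite{Gu4} (and ultimately to Stone--Tukey \cite{ST}) and then used as a black box, so there is no alternative proof in the paper to compare against---your argument is essentially the one found in the cited references.
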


Using the polynomial ham sandwich theorem iteratively, we get the following partitioning result.

\begin{cor} If $W \ge 0$ is a (non-zero) $L^1$-function on $\RR^n$, then there is a sequence of polynomials $Q_1, Q_2, ... $ with $\Deg Q_j \lesssim 2^{j/n}$ with the following equidistribution property:

If $S \ge 1$, and if $\sigma_1, ..., \sigma_S \in \{ -1, +1 \}$ are any sign conditions, then

$$ \int_{ \Sign(Q_s) = \sigma_s \textrm{ for } 1 \le s \le S} W = 2^{-S} \int_{\RR^n} W. $$

\end{cor}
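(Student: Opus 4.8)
The plan is to prove the corollary by iterating the polynomial ham sandwich theorem (Theorem \ref{polyham}) in batches, doubling the number of pieces at each stage. First I would set up the iteration: at stage $j$, I have already constructed polynomials $Q_1, \dots, Q_j$ so that for every choice of signs $\sigma_1, \dots, \sigma_j$, the cell $\{\Sign(Q_s) = \sigma_s, 1 \le s \le j\}$ carries exactly $2^{-j}$ of the total mass $\int_{\RR^n} W$. There are $N_j := 2^j$ such cells. To produce $Q_{j+1}$, I apply Theorem \ref{polyham} to the $N_j$ functions $W_s := W \cdot \mathbf{1}_{\text{cell } s}$ (these are $L^1$ since $W$ is $L^1$), obtaining a single nonzero polynomial $P$ of degree $\le C_n N_j^{1/n} = C_n 2^{j/n}$ that simultaneously bisects the $W$-mass of every one of the $2^j$ cells. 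Setting $Q_{j+1} := P$, each old cell splits into two pieces $\{\Sign(Q_s) = \sigma_s, 1 \le s \le j\} \cap \{P > 0\}$ and $\cap \{P < 0\}$, each carrying half the mass of the old cell, i.e. $2^{-(j+1)} \int W$. This gives the inductive step, with $\Deg Q_{j+1} \lesssim 2^{j/n} \lesssim 2^{(j+1)/n}$, matching the claimed degree bound.

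The base case is $j = 0$: the empty collection of polynomials, with the single "cell" being all of $\RR^n$, trivially carries mass $2^0 \int W = \int W$. Alternatively one starts at $j=1$ by applying Theorem \ref{polyham} to the single function $W$ itself, getting $Q_1$ of degree $\le C_n \lesssim 2^{1/n}$ bisecting $W$. Running the induction forward produces the infinite sequence $Q_1, Q_2, \dots$ with $\Deg Q_j \lesssim 2^{j/n}$, and for any $S \ge 1$ and any signs $\sigma_1, \dots, \sigma_S$, the cell where $\Sign(Q_s) = \sigma_s$ for $1 \le s \le S$ has $W$-mass exactly $2^{-S} \int_{\RR^n} W$ by construction at stage $S$.

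One technical point to handle carefully is the boundary sets $\{P = 0\} = Z(P)$: the ham sandwich conclusion is phrased as $\int_{\{P>0\}} W_j = \int_{\{P<0\}} W_j$, which forces $\int_{\{P=0\}} W_j = 0$ only implicitly via "the rest goes to the zero set." In fact $\int_{\{P>0\}} W_j = \int_{\{P<0\}} W_j = \frac12\big(\int W_j - \int_{\{P=0\}} W_j\big)$, so for the doubling to give exactly $2^{-(j+1)}\int W$ I want $\int_{Z(P)} W = 0$. Since $Z(P)$ has Lebesgue measure zero for any nonzero polynomial $P$, and the statement only concerns integrals against the absolutely continuous measure $W\,dx$ (or one may simply agree to assign the zero-measure boundary to either side), this is immediate; I would note it in one sentence. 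The mild bookkeeping of "which side each zero set is assigned to" is the only subtlety, and it is harmless.

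The main obstacle — really the only place where there is anything to check rather than just record — is verifying that the degree bound telescopes correctly and does not blow up: the degree of $Q_{j+1}$ depends on the number of cells at stage $j$, which is $2^j$, and one must confirm $C_n (2^j)^{1/n} \lesssim 2^{(j+1)/n}$, i.e. that the implied constant absorbs the fixed factor $C_n$ and the factor $2^{-1/n}$. This holds because $\Deg Q_j \lesssim 2^{j/n}$ is an asymptotic (up to a constant depending only on $n$) statement, so the constant $C_n$ from Theorem \ref{polyham} is exactly what the $\lesssim$ hides. No accumulation of constants occurs because each $Q_{j+1}$ is built in a single application of ham sandwich, not by composing previous polynomials. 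Thus the induction closes cleanly.
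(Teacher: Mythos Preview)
Your proposal is correct and follows exactly the approach the paper indicates: the paper simply says ``Using the polynomial ham sandwich theorem iteratively, we get the following partitioning result,'' and your inductive scheme---applying Theorem \ref{polyham} at stage $j$ to the $2^j$ restricted functions $W \cdot \mathbf{1}_{\text{cell}}$ to produce $Q_{j+1}$ of degree $\lesssim 2^{j/n}$---is precisely that iteration spelled out in full. Your remarks on the measure-zero boundary and the non-accumulation of degree constants are accurate and address the only points needing care.
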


We can slightly perturb each $Q_s$ by adding a small generic constant: $\tilde Q_s = Q_s + c_s$, where $c_s \in \RR$.  Using this small perturbation, we will be able to arrange that all the varieties that appear in our arguments are transverse complete intersections.  As long as the constants $c_s$ are sufficiently small, we still have the following slightly weaker version of the equidistribution result: if $S \ge 1$, and if $\sigma_1, ..., \sigma_S \in \{ -1, +1 \}$ are any sign conditions, then

$$ 2^{-S-1} \int_{\RR^n} W \le \int_{ \Sign(\tilde Q_s) = \sigma_s \textrm{ for } 1 \le s \le S} W \le 2^{-S+1} \int_{\RR^n} W. $$

This gives the following polynomial partitioning result, which is designed to allow small perturbations:

\begin{theorem} \label{polypart} 
Suppose that $W \ge 0$ is a (non-zero) $L^1$ function on $\RR^n$.  Then for any degree $D$ the following holds.

There is a sequence of polynomials $Q_1, ..., Q_S$ with the following properties.  We have $\sum \Deg Q_s \lesssim D$ and $2^S \sim D^n$.  Let $P = \prod_{s=1}^S \tilde Q_s = \prod_{s=1}^S (Q_s + c_s)$ where $c_s \in \RR$.  Let $O_i$ be the open sets given by the sign conditions of $\tilde Q_s$.  There are $2^S \sim D^n$ cells $O_i$ and $\RR^n \setminus Z(P) = \cup_i O_i$.  

If the constants $c_s$ are sufficiently small, then for every $O_i$, 

$$ \int_{O_i} W \sim D^{-n} \int_{\RR^n} W. $$

\end{theorem}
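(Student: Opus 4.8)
The plan is to derive Theorem~\ref{polypart} from the iterated ham sandwich corollary proved just above, together with a soft stability argument under perturbation of the constants $c_s$. First I would apply that Corollary to the given function $W$, obtaining a sequence of polynomials $Q_1, Q_2, \dots$ with $\Deg Q_j \lesssim 2^{j/n}$ and the \emph{exact} equidistribution property: for every $S \ge 1$ and every choice of signs $\sigma_1, \dots, \sigma_S$,
$$\int_{\Sign(Q_s) = \sigma_s \textrm{ for } 1 \le s \le S} W = 2^{-S} \int_{\RR^n} W.$$
I would then fix $S$ to be the least integer with $2^S \ge D^n$, so that $2^S \sim D^n$, and discard $Q_{S+1}, Q_{S+2}, \dots$. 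The degree bound follows from summing a geometric series: $\sum_{s=1}^S \Deg Q_s \lesssim \sum_{s=1}^S 2^{s/n} \lesssim 2^{S/n} \sim D$, and since adding a constant does not increase the degree we also get $\Deg P = \sum_s \Deg \tilde Q_s \lesssim D$. By construction the $2^S$ open cells $O_i$ are precisely the nonempty sets obtained by prescribing the signs of $(\tilde Q_1, \dots, \tilde Q_S)$, and $\RR^n \setminus Z(P) = \bigcup_i O_i$.

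The substance of the theorem is the equidistribution estimate $\int_{O_i} W \sim D^{-n}\int W$ once the $c_s$ are small. Write $O_i = O_i(c)$ for the cell corresponding to a fixed sign pattern $\sigma$ with perturbations $c = (c_1,\dots,c_S)$, so $O_i(c) = \bigcap_{s} \{\sigma_s(Q_s + c_s) > 0\}$, while $O_i(0)$ is the unperturbed cell, for which the Corollary gives $\int_{O_i(0)} W = 2^{-S}\int W$ exactly. Comparing the two, one checks that $O_i(c) \,\triangle\, O_i(0) \subseteq \bigcup_{s=1}^S \{\,|Q_s| \le |c_s|\,\}$, and hence
$$\left| \int_{O_i(c)} W - 2^{-S}\int_{\RR^n} W \right| \le \sum_{s=1}^S \int_{\{|Q_s| \le |c_s|\}} W.$$
Since $W \in L^1$ and each $Q_s$ is a nonzero polynomial, the zero set $Z(Q_s)$ is Lebesgue-null, so the sets $\{|Q_s| \le \eps\}$ decrease to $Z(Q_s)$ as $\eps \to 0$ and dominated convergence gives $\int_{\{|Q_s| \le \eps\}} W \to 0$. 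As $S$, and hence the number $2^S$ of cells, is a fixed finite quantity once $D$ is chosen, a single choice of $\delta > 0$ works for all cells simultaneously: if $|c_s| < \delta$ for all $s$ we can force the error above to be at most $\tfrac{1}{2} 2^{-S}\int W$, and then $\int_{O_i} W \sim 2^{-S}\int W \sim D^{-n}\int W$ for every $i$.

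The one remaining point is to keep this compatible with the transverse-complete-intersection requirement that motivates the perturbation in the first place. For this I would invoke Lemma~\ref{sardtci} repeatedly, building the varieties $Z(\tilde Q_1), Z(\tilde Q_1, \tilde Q_2), \dots$ one polynomial at a time; at each stage the bad values of the newly introduced constant form a Lebesgue-null set, so the set of $c$ for which all of these are transverse complete intersections has full measure and therefore meets the ball $\{|c_s| < \delta : 1 \le s \le S\}$ from the previous paragraph. Choosing $c$ in the intersection yields polynomials with all the claimed properties. The only step that is not pure bookkeeping is the dominated-convergence argument controlling $\int_{\{|Q_s| \le \eps\}} W$ as $\eps \to 0$; I expect that to be the main (and essentially the only) real obstacle, and it is mild precisely because $W$ is assumed integrable.
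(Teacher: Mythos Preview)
Your proposal is correct and follows essentially the same approach as the paper: apply the iterated ham-sandwich corollary, truncate at $S$ with $2^S \sim D^n$, sum the geometric series for the degree, and then argue stability under the perturbations $c_s$. In fact you supply more detail than the paper does---the paper simply asserts that for sufficiently small $c_s$ the equidistribution survives up to a factor of~$2$, whereas you give the explicit symmetric-difference bound $O_i(c)\,\triangle\,O_i(0)\subseteq\bigcup_s\{|Q_s|\le|c_s|\}$ together with the dominated-convergence step, which is exactly the right justification. One minor remark on your final paragraph: the application in the paper only needs each $Z(\tilde Q_s)$ (and each $Z(P_1,\dots,P_{n-m},\tilde Q_s)$) to be a transverse complete intersection individually, not the nested intersections $Z(\tilde Q_1,\dots,\tilde Q_s)$ you describe; but this lies outside the theorem statement proper and does not affect your argument.
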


For a generic choice of the constants $c_s$, Lemma \ref{sardtci} guarantees that $Z(\tilde Q_s)$ is a transverse complete intersection for each $s$.  This implies that $Z(P)$ is a finite union of transverse complete intersections.  Similarly, if $Z(P_1, ..., P_{n-m})$ is a transverse complete intersection, then for a generic choice of the constants $c_s$, $Z(P_1, ..., P_{n-m}, \tilde Q_s)$ will also be a transverse complete intersection for each $s$.  

\subsection{Controlling the tangent plane of a variety}

Suppose that $Z$ is an $m$-dimensional transverse complete intersection.  We know that $Z$ is a smooth $m$-dimensional manifold.  We will consider some subsets of $Z$ where the tangent plane obeys certain conditions.  We will see that these subsets are in fact subvarieties of $Z$, and that in generic cases, they are transverse complete intersections.

Let $Z = Z(P_1, ..., P_{n-m})$ be a transverse complete intersection.  Let $w \in \Lambda^m \RR^n$.  Define $Z_w$ by

\begin{equation} \label{defZw} Z_w := \{ x \in Z | \nabla P_1(x) \wedge ... \wedge \nabla P_{n-m}(x) \wedge w = 0 \}. \end{equation}

We note that since $w$ is an $m$-vector, $\nabla P_1(x) \wedge ... \wedge \nabla P_{n-m}(x) \wedge w \in \Lambda^n \RR^n$, which we identify with $\RR$.  Let $g_w := \nabla P_1(x) \wedge ... \wedge \nabla P_{n-m}(x) \wedge w$, a polynomial with degree at most $\Deg P_1 + ... + \Deg P_{n-m}$.  The set $Z_w$ is the algebraic variety $Z(P_1, ..., P_{n-m}, g_w)$.  

\begin{lemma} \label{sardZw} For almost every $w \in \Lambda^m \RR^n$, $Z_w = Z(P_1, ..., P_{n-m}, g_w)$ is a smooth complete intersection.
\end{lemma}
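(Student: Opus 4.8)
The plan is to follow the template of the proof of Lemma~\ref{sardtci}, but since $g_w$ depends on $w$ in a genuinely non-constant way (it is \emph{linear} in $w$, not merely a shift by a scalar), plain Sard's theorem is not quite enough, and I would invoke its parametric version. The first, and conceptually crucial, move is to reformulate the conclusion. By the implicit function theorem $Z=Z(P_1,\dots,P_{n-m})$ is a smooth $m$-manifold, and at each $x\in Z$ the tangent space $T_xZ$ is the orthogonal complement of $\operatorname{span}(\nabla P_1(x),\dots,\nabla P_{n-m}(x))$, which has dimension exactly $n-m$ because $Z$ is a transverse complete intersection. By definition, $Z_w=Z(P_1,\dots,P_{n-m},g_w)$ satisfies (\ref{deftci}) precisely when $\nabla P_1(x)\wedge\dots\wedge\nabla P_{n-m}(x)\wedge\nabla g_w(x)\neq 0$ at every $x\in Z_w$; since the first $n-m$ gradients are already independent, this is equivalent to $\nabla g_w(x)\notin\operatorname{span}(\nabla P_1(x),\dots,\nabla P_{n-m}(x))=(T_xZ)^\perp$, i.e. to the differential of the restricted function $g_w|_Z:Z\to\RR$ being nonzero at $x$. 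As $Z_w=(g_w|_Z)^{-1}(0)$, the lemma is therefore equivalent to the assertion that for almost every $w$, $0$ is a regular value of $g_w|_Z$.

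With this reformulation in hand, I would introduce the polynomial map $F:Z\times\Lambda^m\RR^n\to\RR$ given by $F(x,w):=g_w(x)=\nabla P_1(x)\wedge\dots\wedge\nabla P_{n-m}(x)\wedge w$. Writing $N(x):=\nabla P_1(x)\wedge\dots\wedge\nabla P_{n-m}(x)\in\Lambda^{n-m}\RR^n$, the map $F$ is linear in $w$ with $w$-partial derivative $u\mapsto N(x)\wedge u$; since $N(x)\neq 0$ for every $x\in Z$ (again because $Z$ is a transverse complete intersection), this partial derivative is a surjective linear functional onto $\RR$ at every point. Hence $F$ is a submersion, $0$ is a regular value of $F$, and $M:=F^{-1}(0)$ is a smooth submanifold of $Z\times\Lambda^m\RR^n$. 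Applying Sard's theorem to the projection $\pi:M\to\Lambda^m\RR^n$, almost every $w$ is a regular value of $\pi$. The standard computation underlying the parametric transversality theorem then shows that whenever $w$ is a regular value of $\pi$, $0$ is a regular value of $F(\cdot,w)=g_w|_Z$: otherwise, at some $x\in Z_w$ one would have $T_{(x,w)}M=T_xZ\times\{u:N(x)\wedge u=0\}$, whose image under $d\pi$ is the proper subspace $\{u:N(x)\wedge u=0\}$, contradicting that $w$ is a regular value of $\pi$. Combined with the reformulation above, this gives the lemma.

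The only point requiring a little care is that $Z$ need not be compact, but Sard's theorem and the parametric transversality argument are valid on any second-countable smooth manifold, so this causes no difficulty. The real content of the argument — and the step I would treat as the main obstacle — is the first one: recognizing that the transverse-complete-intersection condition for $Z_w$ is exactly the condition that $0$ be a regular value of $g_w$ restricted to $Z$. Once that identification is made, the linearity of $g_w$ in $w$ makes the needed submersion property of the family $F$ transparent, and the conclusion follows from textbook parametric Sard.
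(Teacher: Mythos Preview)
Your proof is correct and is essentially identical to the paper's own argument: both define the map $F(x,w)=\nabla P_1(x)\wedge\cdots\wedge\nabla P_{n-m}(x)\wedge w$ on $Z\times\Lambda^m\RR^n$, observe it is a submersion because $N(x)\neq 0$ on $Z$, set $M=F^{-1}(0)$, apply Sard to the projection $\pi:M\to\Lambda^m\RR^n$, and then verify via the tangent-space computation that a regular value of $\pi$ gives a regular value of $g_w|_Z$. The paper even cites Guillemin--Pollack's transversality theorem as the model, which is exactly the parametric-Sard argument you describe.
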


The proof uses some ideas from differential topology.  The book \cite{GP} is a good reference.  In particular, the proof here is closely based on the proof of the transversality theorem in Chapter 2.3 of \cite{GP}.  

\begin{proof} Define a smooth function $g: Z \times \Lambda^m \RR^n \rightarrow \RR$ by

$$ g(x, w) := \nabla P_1(x) \wedge ... \wedge \nabla P_{n-m}(x) \wedge w. $$

The function $g$ is smooth, and it has no critical points, because for any $x \in Z$,  $\nabla P_1(x) \wedge ... \wedge \nabla P_{n-m}(x) \not= 0$, and the restriction of $g$ to $\{ x \} \times \Lambda^m \RR^n$ is a non-zero linear function with no critical points.  Therefore $g^{-1}(0)$ is a smooth submanifold $M$ in $Z \times \Lambda^m \RR^n$ (of codimension 1).

Consider the smooth map $\pi: M \rightarrow \Lambda^m \RR^n$ given by $\pi(x,w) = w$.  Note that $\pi^{-1}(w) = Z_w \times \{w \}$.  We will use $\pi$ in order to study $Z_w$.  We claim that $Z_w$ is a transverse complete intersection whenever $w$ is a regular value of $\pi$.  By Sard's theorem, almost every $w \in \Lambda^m \RR^n$ is a regular value of $\pi$, and so this claim implies our conclusion.

To see that $Z_w$ is a transverse complete intersection, it suffices to check that the critical points of the function $g_w: Z \rightarrow \RR$ are disjoint from $Z_w$.  In other words, it suffices to check that for every $x$ with $(x, w) \in M$, $x$ is a regular point of $g_w: Z \rightarrow \RR$.  If $w$ is a regular value of $\pi$, then it means that for each $x$ with $(x,w) \in M$, $(x,w)$ is a regular point for $\pi$.  So it suffices to check that whenever $(x,w)$ is a regular point for $\pi$, $x$ is a regular point for $g_w$.

Recall that $(x,w) \in M$ is a regular point for $\pi: M \rightarrow \Lambda^m \RR^n$ if and only if $d \pi: T_{(x,w)} M \rightarrow \Lambda^m \RR^n$ is surjective.  To understand this condition better, we compute the tangent space $T_{(x,w)}M$.  We know that $T_{(x,w)} M \subset T_{(x,w)} (Z \times \Lambda^m \RR^n) = T_x Z \times \Lambda^m \RR^n$, and more precisely

$$ T_{(x,w)} M = \{ (v, w') \in T_x Z \times \Lambda^m \RR^n | d g_{(x,w)}(v, w') = 0 \}. $$

But $ dg_{(x,w)} (v, w') = (dg_w)_x (v) + \nabla P_1(x) \wedge ... \wedge \nabla P_{n-m}(x) \wedge w'$.  Therefore, 

$$ T_{(x,w)} M = \{ (v, w') \in T_x Z \times \Lambda^m \RR^n | (d g_w)_x (v) + \nabla P_1(x) \wedge ... \wedge \nabla P_{n-m}(x) \wedge w' = 0 \}. $$

If $x$ is not a regular point of $d g_w$, then $(d g_w)_x = 0$, and $T_{(x,w)} M = \{ (v, w') | \nabla P_1(x) \wedge ... \wedge \nabla P_{n-m}(x) \wedge w' = 0 \}$.  But in this case, the projection $d \pi: T_{(x,w)} M \rightarrow \Lambda^m \RR^n$ is not surjective.  (The projection $d \pi$ is just $d \pi(v, w') = w'$.)  So if $(x,w)$ is a regular point of $\pi$, then $x$ is a regular point of $g_w$ as desired.
\end{proof}

If $W \subset \Lambda^m \RR^n$ is a large finite set, then on each connected component of $Z \setminus (\cup_{w \in W} Z_w)$, the tangent plane $TZ$ is constrained in a small region of the Grassmannian.  More precisely, for any small parameter $\beta > 0$, we can choose a finite set $W \subset \Lambda^m \RR^n$ so that, for any two points $x_1, x_2$ in the same component of $Z \setminus (\cup_{w \in W} Z_w)$, $\Angle (T_{x_1} Z, T_{x_2} Z) < \beta$.  We can also choose $W$ generically so that each $Z_w$ is a transverse complete intersection of dimension $m-1$.

\subsection{Controlling transverse intersections between a tube and a variety} Suppose that $T$ is a cylinder of radius $r$ with central axis $\ell$.  Suppose that $Z^m \subset \RR^n$ is a transverse complete intersection.  Define $Z_{> \alpha}$ by 

$$ Z_{> \alpha} := \{ z \in Z | \Angle ( T_z Z, \ell) > \alpha \}. $$

\begin{lemma} \label{transinterbound} Suppose that $Z = Z(P_1, ..., P_{n-m})$ is a transverse complete intersection and that the polynomials $P_j$ have degree at most $D$.  Let $T$ be a tube of radius $r$ as above.  Then for any $\alpha > 0$, $Z_{> \alpha} \cap T$ is contained in a union of $\lesssim D^n$ balls of radius $\lesssim r \alpha^{-1}$.
\end{lemma}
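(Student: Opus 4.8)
The plan is to parametrize $Z_{>\alpha} \cap T$ by its intersection with the axis $\ell$, and to show that over any small piece of $\ell$ the set $Z_{>\alpha}\cap T$ is confined to a small ball. Fix a unit vector $e$ in the direction of $\ell$, and for each point $p$ on $\ell$ let $H_p$ be the affine hyperplane through $p$ orthogonal to $e$. The key geometric observation is the following: if $z \in Z_{>\alpha}$, so that $\Angle(T_z Z, \ell) > \alpha$, then near $z$ the variety $Z$ meets the family of hyperplanes $H_p$ transversely, with a quantitative lower bound $\gtrsim \alpha$ on the relevant angle; consequently $Z$ cannot ``run along'' the tube for more than distance $\lesssim r\alpha^{-1}$ in the $e$-direction before leaving the radius-$r$ cylinder. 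More precisely, if $z_1, z_2 \in Z_{>\alpha}\cap T$ lie on the same connected component of $Z_{>\alpha}\cap T$ and their projections to $\ell$ differ by more than $C r \alpha^{-1}$, we should be able to derive a contradiction with the fact that both points lie within distance $r$ of $\ell$. The upshot is that each connected component of $Z_{>\alpha}\cap T$ projects to an interval of length $\lesssim r\alpha^{-1}$ on $\ell$, hence is contained in a ball of radius $\lesssim r\alpha^{-1}$.

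The remaining step is to bound the number of connected components of $Z_{>\alpha} \cap T$ by $\lesssim D^n$. First, $Z_{>\alpha}$ is an open subset of the $m$-dimensional variety $Z = Z(P_1,\dots,P_{n-m})$, and its components are components of $Z$ intersected with $T$. The cylinder $T$ is an intersection of a slab (two half-spaces) with the sublevel set of a degree-$2$ polynomial $q$ (the squared distance to $\ell$). So $Z_{>\alpha} \cap T$ is contained in the real algebraic set $Z$ cut by a bounded-degree semialgebraic condition. One then invokes a standard bound on the number of connected components of a real algebraic variety of bounded degree — e.g. the Barone–Basu or Oleinik–Petrovsky–Thom–Milnor type bounds, or more simply the fact that $Z \cap B$ for a ball $B$ has $\lesssim_{n,D} 1$ components and $T$ can be covered by boundedly many translated copies of such configurations after a preliminary decomposition. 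Concretely: cover $\ell$ by intervals of length $r\alpha^{-1}$; over each interval the portion of $T$ is comparable to a ball of radius $\lesssim r\alpha^{-1}$, but the number of such intervals is unbounded. To get the clean $D^n$ bound I would instead argue directly: apply a bound for the number of connected components of the real variety $\{P_1 = \cdots = P_{n-m} = 0\}$, which is $\lesssim_n D^n$ (this is the Thom–Milnor bound, with the degree appearing as $D^n$ up to dimensional constants), and note that intersecting with the convex-ish set $T$ cannot increase the component count in the relevant sense once we combine it with the projection argument of the first paragraph — each component of $Z$ contributes a controlled number of components to $Z_{>\alpha}\cap T$, each confined to a ball of radius $\lesssim r\alpha^{-1}$.

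Let me restructure the logic so the two ingredients fit together cleanly. Step one: by the Thom–Milnor bound, $Z = Z(P_1,\dots,P_{n-m})$ has $\lesssim D^n$ connected components; since adding the one extra polynomial condition defining $Z_{>\alpha}$ (via $\Angle(T_zZ,\ell)>\alpha$, which is an open semialgebraic condition of bounded degree) and intersecting with the bounded-degree semialgebraic set $T$ only multiplies the component count by a dimensional constant times a power of $D$, the set $Z_{>\alpha}\cap T$ has $\lesssim D^n$ connected components. Step two: by the transversality/projection argument above, each such component projects onto an interval of length $\lesssim r\alpha^{-1}$ in the direction of $\ell$ and lies within distance $r$ of $\ell$, hence sits inside a ball of radius $\lesssim r\alpha^{-1}$ (using $r \le r\alpha^{-1}$ since $\alpha \le 1$ in all applications; if $\alpha$ could exceed $1$ the trivial bound $r$ suffices). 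Combining the two steps gives the claim.

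The main obstacle I expect is making the transversality argument of step two quantitative and rigorous for a general transverse complete intersection — it is geometrically obvious that if the tangent plane of $Z$ makes angle $>\alpha$ with $\ell$ then $Z$ ``escapes'' the radius-$r$ tube after advancing $\lesssim r\alpha^{-1}$ along the axis, but turning this into a clean statement requires controlling how the angle condition at a point propagates along a connected component. One clean way is: on a component $\Omega$ of $Z_{>\alpha}\cap T$, the function $z \mapsto (z - p_0)\cdot e$ (signed distance along $\ell$) is smooth, and the condition $\Angle(T_zZ,\ell)>\alpha$ together with $z\in T$ forces its image to have length $\lesssim r\alpha^{-1}$; I'd prove this by a connectedness/continuity argument, possibly passing to the limiting behavior at the boundary of $\Omega$ where $z$ exits the tube. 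An alternative that sidesteps the quantitative propagation is to cover $\ell$ by $O(1)$-overlapping intervals of length $r\alpha^{-1}$, restrict $T$ to the corresponding subtubes $T_j$ (each roughly a ball of radius $r\alpha^{-1}$), and observe that the angle hypothesis guarantees $Z_{>\alpha}$ meets each subtube $T_j$ in only the subtubes near where $Z$ actually passes, bounding the total count of nonempty $(j,\text{component})$ pairs by $\lesssim D^n$ via the component bound applied on each ball. Either route should work; the first is cleaner if the propagation estimate can be made to go through.
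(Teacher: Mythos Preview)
Your Step~Two is where the argument breaks. The claim that each connected component of $Z_{>\alpha}\cap T$ lies in a ball of radius $\lesssim r\alpha^{-1}$ is false in general. The intuition ``tangent makes angle $>\alpha$ with $\ell$, so $Z$ escapes the tube after advancing $\sim r\alpha^{-1}$'' is only valid when the variety cannot wind around the axis. For a curve in $\RR^3$ (say $m=1$, $n=3$), the condition $\Angle(T_zZ,\ell)>\alpha$ says the tangent has horizontal speed $\gtrsim\alpha$; but the horizontal projection can circulate inside the radius-$r$ disk rather than exit it, so the curve may run along the tube for far longer than $r\alpha^{-1}$. For a degree-$D$ algebraic curve the horizontal projection is a plane curve of degree $\lesssim D$, which can have length $\sim Dr$ inside the disk, so a single connected component of $Z_{>\alpha}\cap T$ can have axial extent $\sim Dr\alpha^{-1}$, not $r\alpha^{-1}$. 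Your proposed fix (``connectedness/continuity argument'' for the height function) cannot work, because the obstruction is genuinely global winding, not a local issue. The alternative of chopping $\ell$ into intervals of length $r\alpha^{-1}$ introduces an unbounded number of pieces and you give no mechanism to control how many of them meet a given component.

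Your Step~One is also loose: Thom--Milnor bounds the components of $Z$, but passing to the open semialgebraic subset $Z_{>\alpha}$ and then intersecting with $T$ can create new components, and ``only multiplies by a dimensional constant times a power of $D$'' is asserted rather than proved. This step is probably repairable with a careful real-algebraic argument, but the gap in Step~Two is the fatal one.

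The paper avoids the winding problem precisely by \emph{not} working with raw connected components. It inducts on $m$: it slices $Z$ by a bounded family of auxiliary hypersurfaces $Z_w$ (chosen so that on each component of $Z\setminus\bigcup_w Z_w$ the tangent plane of $Z$ varies by at most $1/100$), applies the inductive hypothesis to the $(m-1)$-dimensional varieties $Z_w$, and then observes that the remaining pieces---now genuinely near-planar and quantitatively transverse to $\ell$---are each confined to a ball of the right size, with their number controlled by Bezout via a random $(n-m)$-plane through $\ell$. The tangent-plane control is exactly what rules out winding; your argument is missing any analogue of it.
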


The main tool in the proof is the following version of the Bezout theorem:

\begin{theorem} \label{bezout} (cf. Theorem 5.2 of \cite{CKW} for a short proof) Suppose that $Z = Z(Q_1, ..., Q_n)$ is a transverse complete intersection in $\RR^n$.  Then $Z$ is finite and the cardinality of $Z$ is at most $\prod_{j=1}^n \Deg Q_j$. 
\end{theorem}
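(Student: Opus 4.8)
The plan is to deduce the bound from the complex Bézout inequality, using the real transversality hypothesis only to force the real zeros to be isolated \emph{complex} points. Write $d_j := \Deg Q_j$, set $F := (Q_1,\dots,Q_n)\colon \CC^n \to \CC^n$, and let $Z_\CC := Z(Q_1,\dots,Q_n) \subset \CC^n$ be the full complex common zero set, so that $Z = Z_\CC \cap \RR^n$.

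First I would show that every point of $Z$ is an isolated point of $Z_\CC$. Under the identification of $\Lambda^n\RR^n$ with $\RR$ used in the paper, the transversality condition (\ref{deftci}) says precisely that $\nabla Q_1(p)\wedge\cdots\wedge\nabla Q_n(p) \neq 0$ for every $p \in Z$, that is, up to sign, that $\det DF(p) \neq 0$. Since $DF(p)$ is a real matrix with nonvanishing determinant, it is also invertible over $\CC$, so by the holomorphic inverse function theorem $F$ restricts to a biholomorphism from a neighbourhood of $p$ in $\CC^n$ onto a neighbourhood of $0$. Hence $F^{-1}(0) = Z_\CC$ meets that neighbourhood only in $p$, i.e. $p$ is isolated in $Z_\CC$. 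Distinct points of $Z$ are therefore distinct isolated points of $Z_\CC$, and it suffices to bound the number of isolated points of $Z_\CC$ by $\prod_{j} d_j$; this will also prove finiteness of $Z$.

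For the count I would pass to projective space. Let $\bar Q_j$ be the degree-$d_j$ homogenisation of $Q_j$, let $X_j \subset \mathbb{P}^n$ be the corresponding hypersurface, and put $X := X_1 \cap \cdots \cap X_n$. Since $\CC^n$ is Zariski-open in $\mathbb{P}^n$ and $Z_\CC = X \cap \CC^n$, any point isolated in $Z_\CC$ is isolated in $X$, hence is a zero-dimensional irreducible component of $X$. Now I invoke the Bézout inequality in the form valid for arbitrary (possibly improper, possibly positive-dimensional) intersections: the sum of the degrees of the irreducible components of $X$ is at most $\prod_{j=1}^n d_j$. Each zero-dimensional component contributes degree $1$, so $X$ has at most $\prod_j d_j$ zero-dimensional components; combined with the previous step this gives $|Z| \le \prod_{j=1}^n \Deg Q_j$, and in particular $Z$ is finite.

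The one non-elementary ingredient is the Bézout inequality in exactly this generality: ordinary projective Bézout, counting intersection points with multiplicity, would apply only if $X$ were already $0$-dimensional, which we cannot assume — nothing prevents the complex common zero set from having positive-dimensional components carrying no real points. I expect getting this input cited or established cleanly to be the main point; the standard route is to quote the Bézout inequality from elimination/intersection theory. A slightly more self-contained alternative replaces $Q_j$ by $Q_j + \eta_j$ for generic polynomials $\eta_j$ with $\Deg\eta_j \le d_j$ and tiny coefficients: by the implicit function theorem each $p \in Z$ persists and transversality is preserved along the new real zero set, so the real count does not decrease, and if one checks that for generic such $\eta_j$ the complex complete intersection becomes $0$-dimensional, ordinary affine Bézout then bounds the perturbed count by $\prod_j d_j$. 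Either way, the analytic half of the argument — real transversality implies the real zeros are isolated complex points — is short, and is the only place hypothesis (\ref{deftci}) is genuinely used.
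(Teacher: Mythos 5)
Your argument is correct. The two halves do what they should: real transversality at $p \in Z$ says exactly that the Jacobian $\det(\nabla Q_1,\dots,\nabla Q_n)(p) \neq 0$, and since this matrix is then invertible over $\CC$, the holomorphic inverse function theorem makes $p$ an isolated point of the complex zero set, hence a zero-dimensional irreducible component of the projective intersection $X$; and the refined B\'ezout inequality (Fulton--Vogel: the sum of the degrees of the irreducible components of an intersection of hypersurfaces in $\mathbb{P}^n$ is at most the product of their degrees) then bounds the number of such components by $\prod_j \Deg Q_j$. You are also right that this stronger form of B\'ezout is genuinely needed here, since the complexification can carry positive-dimensional components with no real points (e.g.\ $Q_1=(x^2+y^2+1)(x-2)$, $Q_2=(x^2+y^2+1)(y-3)$ in the plane), so classical B\'ezout with multiplicities does not apply directly. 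Note, for comparison, that the paper does not prove this statement at all: it simply cites Theorem 5.2 of \cite{CKW}, so there is no internal argument to measure yours against. Relative to that citation, your route trades one black box for another -- the improper-intersection B\'ezout inequality -- and its virtue is that the only place the hypothesis (\ref{deftci}) enters is the short nondegeneracy-implies-isolatedness step, which you isolate cleanly. Your sketched perturbation alternative would need the genericity claim (that the perturbed complex system is zero-dimensional) argued carefully, but since you present it only as an optional variant, the main proof stands on its own; the only small caveat worth recording is the trivial degenerate case where some $Q_j$ is constant, which you can dispose of directly.
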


Using this Bezout theorem, we now prove Lemma \ref{transinterbound}.

\begin{proof} The proof is by induction on $m$.  When $m=0$, Theorem \ref{bezout} guarantees that $Z$ consists of at most $D^n$ points, and the conclusion follows.

Now we turn to the inductive step.  Without loss of generality, we can assume that $\ell$ is the $x_n$-axis.  We let $T_r$ denote the $r$-neighborhood of the $x_n$-axis.  

Next we do some scaling to reduce to a special case.  By rescaling, we can reduce to the case that $r=1$.  Next, by scaling in the $x_n$-coordinate only, we can reduce to the case that $\alpha = 1$.  
So we have to show that $Z_{> 1} \cap T_1$ is contained in $\lesssim D^n$ balls of radius $\lesssim 1$.





Let $Z_w$ be defined as in the last subsection.  We choose $\lesssim 1$ values of $w$ in general position so that on each connected component of $Z \setminus \cup_w Z_w$, the tangent plane of $Z$ varies by an angle at most $\frac{1}{100}$.  Since $w$ is generic, $Z_w = Z(P_1, ..., P_{n-m}, g_w)$ is a transverse complete intersection of dimension $m-1$.  Also $\Deg g_w \lesssim D$.  We can apply our inductive assumption to $Z_w$, using radius $r=20$ and $\alpha = 1/2$.  We see that
$ Z_{w, > 1/2} \cap T_{20}$ is covered by $\lesssim D^n$ balls of radius $\lesssim 1$.

Next we claim that $Z_{>1/2} \cap Z_{w} \subset Z_{w, > 1/2}$.  If $x \in Z_{>1/2} \cap Z_w$, then

$$ \inf_{v \in T_x Z} \Angle(v, \ell) > 1/2. $$

\noindent But $T_x Z_w \subset T_x Z$.  Therefore, $\inf_{v \in T_x Z_w} \Angle(v, \ell) > 1/2$, and so $x \in Z_{w, >1/2}$ as claimed.   

Since the total number of $w$ is $\lesssim 1$, 

$$\cup_w Z_{> 1/2} \cap Z_w \cap T_{20} \textrm{ is contained in $\lesssim D^n$ balls $B_i$ of radius $\lesssim 1$}. $$

If $B_i = B(x_i, r_i)$ we write $10 B_i$ for $B(x_i, 10 r_i)$.  The union $\cup_i 10 B_i$ is a set of $\lesssim D^n$ balls of radius $\lesssim 1$ which covers part of $Z_{> 1} \cap T_1$.  We still have to cover the remaining part

$$  Z_{>1} \cap T_1 \setminus \cup_i 10 B_i. $$

Consider a point $z$ in this remaining part.  We can assume the radius of each $B_i$ is at least 2, and so we know that the distance from $z$ to $\cup_i B_i$ is at least 10.  Let $A$ be the connected component of $Z \cap B(z,10)$ containing $z$.  We claim that $A$ is disjoint from all $Z_w$.  Indeed, suppose that $\gamma$ was a curve in $A$ starting at $z$ and intersecting $\cup_w Z_w$ for the first time at $z' \in A$.  Along the curve $\gamma$, the tangent plane of $TZ$ is constant up to angle $\frac{1}{100}$.  Since $\gamma$ starts at $z \in Z_{>1}$,  $\gamma \subset Z_{> 1/2}$.  Also, $\gamma \subset B(z,10) \subset T_{20}$.  We conclude that $z' \in Z_{>1/2} \cap Z_w \cap T_{20}$, and so so $z' \in \cup_i B_i$.  But $B(z,10)$ is disjoint from $\cup_i B_i$.  This contradiction proves the claim.  Since $A$ is connected and disjoint from all $Z_w$, the tangent plane of $Z$ is constant on $A$ up to angle $\frac{1}{100}$.  

Therefore, $A$ is a small perturbation of an $m$-plane that cuts across $T_1$ in a quantitatively transverse way.  Let $\Pi$ be a random $(n-m)$-plane containing the $x_n$-axis.  With probability $\gtrsim 1$, $\Pi \cap A \cap T_{1}$ is non-empty.  By the Bezout theorem (Theorem \ref{bezout}), $|\Pi \cap Z| \le D^{n-m}$ for generic $\Pi$.  Therefore, there can be at most $\lesssim D^{n-m}$ disjoint sets $A$ of this type.  So we see that the remaining part of $Z_{>1} \cap T_1$ is contained in $\lesssim D^{n-m}$ additional balls of radius $\lesssim 1$.  

\end{proof}

\section{Transverse equidistribution estimates}

In this section we prove a transverse equidistribution estimate.  To set up the statement, we first define what it means for a wave packet to be tangent to a transverse complete intersection $Z$.

\begin{defn} \label{defTZ}

Suppose that $Z = Z(P_1, ..., P_{n-m})$ is a transverse complete intersection.  We say that $\Ttv$ is $R^{-1/2 + \dm}$-tangent to $Z$ in $B_R$ if:

\begin{equation} \label{tangZdist} \Ttv \subset \NZR, \end{equation}

\noindent and for any $x \in \Ttv$ and $z \in Z \cap B_R$ with $|x-z| \lesssim R^{1/2 + \dm}$, 

\begin{equation} \label{tangZangle} \Angle (G(\theta), T_z Z ) \lesssim R^{-1/2 + \dm}. \end{equation}

We define

$$ \TTT_Z := \{ (\tv) | \Ttv \textrm{ is $R^{-1/2 + \dm}$ tangent to } Z \textrm{ in } B_R \}. $$

We say that $f$ is concentrated in wave packets from $\TTT_Z$ if

$$ \sum_{(\tv) \notin \TTT_Z} \| \ftv \|_{L^2} \le \RD(R) \| f \|_{L^2}. $$

\end{defn}

(In this definition $\dm > 0$ is a small constant.  The estimates we prove in this section hold for any $\dm \ge 0$.  We will choose $\dm$ in Section \ref{secmainind}.) 

Suppose that $B$ is a ball of radius $R^{1/2 + \dm}$ in $\RR^n$.  Define

\begin{equation} \label{defTBZ}  \TTT_{B,Z} := \{ (\tv) \in \TTT_Z | \Ttv \cap B \not= \emptyset \}. \end{equation}

The main result of this section is the following transverse equidistribution estimate.  

\begin{lemma} \label{equid2} Suppose that $B$ is a ball of radius $R^{1/2 + \dm}$ in $B_R \subset \RR^n$ and 
that $\rho \le R$.  Suppose that  $g = \sum_{(\tv) \in \TTT_{B,Z}} \gtv$.  Then

$$ \int_{B \cap N_{\rho^{1/2 + \dm}}(Z)} | \EP g |^2 \lesssim R^{O(\dm)} \left(\frac{R^{1/2}}{\rho^{1/2}}\right)^{-(n-m)}\int_{2B} | \EP g |^2 + \RD(R) \| g \|_{L^2}^2. $$

\end{lemma}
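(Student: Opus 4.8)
The plan is to prove this in five moves: (i) pin down the directions of the wave packets making up $g$; (ii) convert that into a statement about the Fourier support of $\EP g$; (iii) extract a local-constancy (``transverse equidistribution'') property of $|\EP g|$; (iv) estimate how much of $B\cap N_{\rho^{1/2+\dm}}(Z)$ a thin plank can see; (v) assemble. Throughout I write $\vec\Pi$ for the linear subspace parallel to an affine plane $\Pi$, and $\vec\Pi^\perp$ for its orthogonal complement.

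First I would pin down directions. Since every $(\tv)\in\TTT_{B,Z}$ gives a tube $T_{\theta,v}$ that meets $B$ and is $R^{-1/2+\dm}$-tangent to $Z$, fix one such tube; the distance condition of Definition \ref{defTZ} produces a point $z_1\in Z$ within $O(R^{1/2+\dm})$ of $B$. Because $B$ has radius $R^{1/2+\dm}$, \emph{every} tube in $g$ passes within $O(R^{1/2+\dm})$ of $z_1$, so the angle condition applied with the point $z_1$ forces $\Angle(G(\theta),V)\lesssim R^{-1/2+\dm}$ for all wave packets in $g$, where $V:=T_{z_1}Z$ is a fixed $m$-plane. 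Here one also uses the tangent-plane control from Section 5 to record that on $2B$ the tangent spaces $T_zZ$ vary by at most a small angle, so $V$ (hence each relevant $T_zZ$) is quantitatively transverse to the $(n-m)$-plane directions that will appear below.

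Next, the translation to frequency: this is the ``short calculation'' alluded to in the sketch. Using $G(\omega)=(-2\omega,1)/|(-2\omega,1)|$, the preimage under $G$ of the $m$-plane $V$ is an affine $(m-1)$-plane $P\subset\RR^{n-1}$, so all of the caps $\theta$ occurring in $g$ lie in $N_{R^{-1/2+O(\dm)}}(P)$; and since $\omega\mapsto|\omega|^2$ restricted to the affine plane $P$ is a quadratic graph over $P$, the image $\xi(P)$ lies in an affine $m$-plane $\Pi\subset\RR^n$. Hence the Fourier transform of $\eta_R\EP g$ (after harmlessly restricting to a neighborhood of $2B$) is essentially supported in $N_{R^{-1/2+O(\dm)}}(\Pi)$. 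Comparing $\vec\Pi=\vec P\oplus\RR e_n$ with $\vec V=\vec P\oplus\RR(-2\omega_*,1)$ gives $\vec\Pi^\perp\cap\vec V=\{0\}$ with a quantitative angle, so $\vec\Pi^\perp$ is transverse to $V$, and by the previous paragraph to $T_zZ$ for the relevant $z\in Z\cap 2B$.

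For the last three moves: writing $\EP g=\EP g*K$ with $K$ an approximate reproducing kernel for $N_{R^{-1/2+O(\dm)}}(\Pi)$ gives $|\EP g(x)|^2\lesssim(|\EP g|^2*|K|)(x)$ with $|K|$ concentrated, up to rapidly decaying tails, on the plank $P_0$ of thickness $O(1)$ in $\vec\Pi$-directions and $R^{1/2-O(\dm)}$ in $\vec\Pi^\perp$-directions — i.e.\ $|\EP g|$ is morally constant along $\vec\Pi^\perp$ at scale $R^{1/2-O(\dm)}$. One may assume $\rho\le R^{1-O(\dm)}$ (otherwise $R^{O(\dm)}(R^{1/2}/\rho^{1/2})^{-(n-m)}\gtrsim 1$ and the claim follows from $B\subset 2B$), so $\rho^{1/2+\dm}\le R^{1/2-O(\dm)}$. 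Cover $N_{\rho^{1/2+\dm}}(Z)\cap B$ by finitely overlapping balls $B'$ of radius $\rho^{1/2+\dm}$, let $\tilde B'$ be the $R^{1/2-O(\dm)}$-dilate of $B'$ in the $\vec\Pi^\perp$-directions, and use local constancy to get $\int_{B'}|\EP g|^2\lesssim R^{O(\dm)}\big(\rho^{1/2+\dm}/R^{1/2-O(\dm)}\big)^{n-m}\int_{\tilde B'}|\EP g|^2+\RD(R)\|g\|_{L^2}^2$. The planks $\tilde B'$ have $O(1)$ overlap: those sharing a $\vec\Pi$-location lie inside $N_{\rho^{1/2+\dm}}(Z)$ intersected with a $\rho^{1/2+\dm}$-neighborhood of an $(n-m)$-plane parallel to $\vec\Pi^\perp$, and since $Z$ crosses that plane quantitatively transversely, Bézout (Theorem \ref{bezout}, packaged as in Lemma \ref{transinterbound}) bounds the number of such $B'$. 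Summing, $\sum_{B'}\int_{\tilde B'}|\EP g|^2\lesssim\int_{2B}|\EP g|^2$, and since $\big(\rho^{1/2+\dm}/R^{1/2-O(\dm)}\big)^{n-m}=R^{O(\dm)}(R^{1/2}/\rho^{1/2})^{-(n-m)}$ this is the asserted bound.

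The hard part is the combination of the frequency translation and the transversality: one must verify that ``$G(\theta)$ near the $m$-plane $V$'' genuinely localizes $\widehat{\EP g}$ to a neighborhood of an actual affine $m$-plane $\Pi$ (rather than a curved $(m-1)$-dimensional surface), and that the orthogonal complement $\vec\Pi^\perp$ is quantitatively transverse to $Z$ on $2B$ — this is exactly what makes the $O(1)$-overlap estimate of move (v) work. The latter transversality requires controlling the variation of $T_zZ$ over a ball of radius $R^{1/2+\dm}\gg 1$, which is where the algebraic/differential-geometric input of Section 5 (Lemmas \ref{sardZw}, \ref{transinterbound} and Bézout) is needed rather than a purely oscillatory-integral argument.
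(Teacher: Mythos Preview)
Your overall architecture matches the paper's: localize the directions $G(\theta)$ to a subspace, convert this to Fourier support in a neighborhood of an affine plane $\Pi$, deduce equidistribution transverse to $\Pi$, and then count how $N_{\rho^{1/2+\dm}}(Z)$ meets each transverse $(n-m)$-plane. The frequency step (your $\Pi=P\times\RR$) is exactly the paper's $A_\xi=A_\omega\times\RR$, and your local-constancy kernel argument is an acceptable substitute for the paper's Lemma~\ref{heis2} plus Fubini over planes $\Pi$ parallel to $V'$.

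The genuine gap is in your choice of $V$ and the transversality claim that follows. You set $V=T_{z_1}Z$ for one point $z_1$ and then assert that ``on $2B$ the tangent spaces $T_zZ$ vary by at most a small angle.'' Nothing in Section~5 gives this: the $Z_w$ machinery controls tangent planes on connected components of $Z\setminus\cup_w Z_w$, not on all of $Z\cap 2B$, and over a ball of radius $R^{1/2+\dm}\gg 1$ the tangent plane of a bounded-degree variety can rotate by an $O(1)$ angle. In the example $T_{z_1}Z$ and $T_{z_2}Z$ differ by $O(1)$, your $\vec\Pi^\perp$ (built from $T_{z_1}Z$) need not be transverse to $T_{z_2}Z$, and then an $(n-m)$-plane parallel to $\vec\Pi^\perp$ can meet $Z$ tangentially near $z_2$; the Bézout/Lemma~\ref{transinterbound} count you invoke in step~(v) only controls the \emph{transverse} part $Z_{>\alpha}$, so your $O(1)$-overlap bound for the planks $\tilde B'$ fails.

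The paper repairs exactly this point with a different idea: take $V$ to be a subspace of \emph{minimal} dimension such that $\Angle(G(\theta),V)\lesssim R^{-1/2+\dm}$ for all $(\theta,v)\in\TTT_{B,Z}$ (so possibly $\dim V<m$), and let $V'$ come from Lemma~\ref{equid1}. Transversality of $V'$ to every $T_zZ$ with $z\in Z\cap 2B$ is then proved by contradiction: if some $T_zZ$ contained a subspace $W$ nearly inside $V'$ with $\dim W>\dim Z-\dim V$, one builds a linear map $L:\RR^n\to V$ with $L|_V=\mathrm{id}$, $L|_W=0$, $|L|\lesssim 1$, and checks that every $G(\theta)$ lies within $O(R^{-1/2+\dm})$ of $L(T_zZ)$, a subspace of dimension $<\dim V$, contradicting minimality. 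This argument uses no control whatsoever on the variation of $T_zZ$; it trades that geometric input for the combinatorial minimality of $V$. Once transversality is in hand, the covering of $\Pi\cap N_{\rho^{1/2+\dm}}(Z)\cap B$ by $R^{O(\dm)}(R^{1/2}/\rho^{1/2})^{\dim V'+m-n}$ balls of radius $\rho^{1/2+\dm}$ and the Fubini step give the stated bound.
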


We build up to the proof via several smaller lemmas.  We begin with a version of the Heisenberg uncertainty principle, saying that a function which is concentrated in a small ball in frequency space cannot concentrate too much in physical space.

\begin{lemma} \label{heis1} Suppose that $G: \RR^{n} \rightarrow \CC$ is a function, and that $\hat G$ is supported in a ball of radius $r$, $B(\xi_0, r)$.  Then for any ball $B(x_0, \rho)$ of radius $\rho \le r^{-1}$,

$$ \int_{B(x_0, \rho)} |G|^2 \lesssim \frac{|B_\rho|}{|B_{r^{-1}}|} \int |G|^2. $$
\end{lemma}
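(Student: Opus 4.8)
The plan is to reduce to the model case where the frequency support is centered at the origin, and then exploit the fact that a function with frequency support in a small ball is, up to a harmless modulation, slowly varying on the dual scale. First I would modulate: write $G(x) = e^{i x \cdot \xi_0} H(x)$ where $\hat H$ is supported in $B(0, r)$. Since $|G| = |H|$ pointwise, it suffices to prove the estimate for $H$, so we may assume $\xi_0 = 0$ and $\hat G$ is supported in $B(0,r)$.

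Next I would introduce a smooth bump $\phi$ which equals $1$ on $B(0,1)$ and is supported in $B(0,2)$, and rescale it to the frequency ball: let $\chi(\xi) := \phi(\xi / r)$, so $\chi \equiv 1$ on $\operatorname{supp} \hat G$. Then $G = \check\chi * G$, where $\check\chi$ is an $L^1$-normalized bump essentially concentrated on the ball $B(0, r^{-1})$ of dual radius, with rapidly decaying tails beyond that scale; concretely $\check\chi(x) = r^n \check\phi(r x)$, so $\|\check\chi\|_{L^1} \lesssim 1$ and $|\check\chi(x)| \lesssim r^n (1 + r|x|)^{-N}$ for any $N$. The heart of the argument is then a pointwise bound on $\int_{B(x_0,\rho)} |G|^2$: writing $G = \check\chi * G$ and applying Cauchy--Schwarz (or Young's inequality together with the $L^1$-normalization of $\check\chi$), one gets that the local $L^2$ mass of $G$ on $B(x_0,\rho)$ is controlled by a weighted average of $\int |G|^2$, with weight coming from $\check\chi$ integrated over the translates of $B(x_0,\rho)$. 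Since $\rho \le r^{-1}$, the ball $B(x_0,\rho)$ fits inside a single dual-scale ball, and this weighted count contributes a factor comparable to $|B_\rho| / |B_{r^{-1}}|$ — this is just the statement that a function which is morally constant at scale $r^{-1}$ spreads its $L^2$ mass evenly among sub-balls of radius $\rho$.

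More precisely, the step I would carry out is: for $x \in B(x_0,\rho)$,
$$ |G(x)|^2 = \left| \int \check\chi(x - y) G(y)\, dy \right|^2 \le \|\check\chi\|_{L^1} \int |\check\chi(x-y)|\, |G(y)|^2\, dy \lesssim \int |\check\chi(x-y)|\, |G(y)|^2\, dy, $$
and then integrate in $x$ over $B(x_0,\rho)$ and apply Fubini:
$$ \int_{B(x_0,\rho)} |G|^2 \lesssim \int |G(y)|^2 \left( \int_{B(x_0,\rho)} |\check\chi(x - y)|\, dx \right) dy. $$
For the inner integral, I would bound $\int_{B(x_0,\rho)} |\check\chi(x-y)|\,dx \lesssim |B_\rho| \cdot \sup_{x \in B(x_0,\rho)} |\check\chi(x-y)| \lesssim |B_\rho| \cdot r^n = |B_\rho|/|B_{r^{-1}}|$, using $|\check\chi(z)| \lesssim r^n$ uniformly. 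This gives $\int_{B(x_0,\rho)} |G|^2 \lesssim (|B_\rho|/|B_{r^{-1}}|) \int |G|^2$, as desired.

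The step I expect to be the only mild subtlety is making the tail decay of $\check\chi$ do no harm: the crude bound $|\check\chi| \lesssim r^n$ everywhere is already enough for the inner integral above, so in fact the rapid decay is not even needed for this particular lemma — the uniform pointwise bound on $\check\chi$ suffices. Thus there is no real obstacle; the argument is a one-line convolution estimate once the modulation normalization and the choice of $\chi$ are set up. The only thing to be careful about is the normalization constant in the Fourier transform convention so that $\|\check\chi\|_{L^1} \lesssim 1$ and $\|\check\chi\|_{L^\infty} \lesssim r^n$ come out with the stated powers; both follow from $\check\chi(x) = c_n r^n \check\phi(rx)$ with $\phi$ a fixed Schwartz bump.
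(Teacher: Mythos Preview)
Your proof is correct, but it takes a genuinely different route from the paper's. The paper localizes in physical space by multiplying $G$ by a bump $\eta$ adapted to $B(x_0,\rho)$, passes to the Fourier side via Plancherel, and then bounds $\hat\eta * \hat G$ pointwise using $\|\hat\eta\|_{L^\infty}\|\hat G\|_{L^1}$ together with Cauchy--Schwarz on $\int_{B_r}|\hat G|$. Your argument instead stays entirely on the physical side: you use the reproducing formula $G = \check\chi * G$ coming from the frequency support, apply Cauchy--Schwarz inside the convolution, and then exploit only the crude pointwise bound $|\check\chi| \lesssim r^n$ on the inner integral. Your approach is arguably more elementary---it never invokes Plancherel and makes transparent that only the $L^1$ and $L^\infty$ norms of $\check\chi$ are used---while the paper's Fourier-side computation makes the uncertainty-principle heuristic (spreading in frequency forces spreading in space) more visibly symmetric. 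Both yield the same constant structure $|B_\rho|/|B_{r^{-1}}|$ with no loss.
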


\begin{proof} Let $\eta$ be a smooth bump function with $|\eta| \sim 1$ on $B(x_0, \rho)$ and rapidly decaying outside of it.  Then $| \hat \eta(\xi) | \sim |B_\rho|$ on $B_{\rho^{-1}}$ and rapidly decaying outside of it.

$$ \int_{B_\rho} |G|^2 \lesssim \int | \eta G|^2 = \int | \hat \eta * \hat G|^2. $$

For $\xi \lesssim \rho^{-1}$, we bound

$$ | \hat \eta * \hat G (\xi) | \le \| \hat \eta \|_{L^\infty} \| \hat G \|_{L^1} \sim | B_\rho | \int_{B_r} |\hat G|. $$

For $|\xi|$ far from $B_{\rho^{-1}}$, the rapid decay of $\eta$ takes over and gives a stronger bound.  All together, we have

$$  \int | \hat \eta * \hat G|^2 \lesssim |B_\rho^{-1}| \left( |B_\rho| \int_{B_r} |\hat G| \right)^2 \le |B_\rho| |B_r| \int | \hat G|^2 = \frac{| B_\rho|}{|B_{r^{-1}}|} \int |G|^2. $$

\end{proof}

Next we need a more local version of this lemma.

\begin{lemma} \label{heis2} Suppose that $G: \RR^{n} \rightarrow \CC$ is a function, and that $\hat G$ is supported in a ball of radius $r$, $B(\xi_0, r)$.  Then for any ball $B(x_0, \rho)$ with $\rho \le r^{-1}$, we have the inequality
 
$$ \int_{B(x_0, \rho)} |G|^2 \lesssim \frac{|B_\rho|}{|B_{r^{-1}}|} \int W_{B(x_0, r^{-1})} |G|^2 ,  $$

where $W_{B(x_0, r^{-1})}$ is a weight function which is equal to 1 on $B(x_0, r^{-1})$ and rapidly decaying outside of it.  
\end{lemma}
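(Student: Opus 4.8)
The plan is to upgrade Lemma \ref{heis1} by replacing its one non-local step --- the bound $\| \hat G \|_{L^1} \lesssim |B_r|^{1/2} \| \hat G \|_{L^2}$, which destroys all spatial information --- with a pointwise reproducing formula for $G$. Since $\hat G$ is supported in $B(\xi_0, r)$, I would fix a Schwartz function $\psi$ whose Fourier transform is a smooth bump equal to $1$ on $B(\xi_0, r)$ and supported in $B(\xi_0, 2r)$, so that $G = G * \psi$. Writing $\hat \psi(\xi) = m(\xi - \xi_0)$ for a fixed bump $m$ adapted to $B(0, 2r)$, one has $\psi(x) = e^{i x \cdot \xi_0} \check m(x)$, and the standard estimate for the inverse transform of a bump gives, for every $N$, $|\psi(x)| \lesssim_N r^n (1 + r|x|)^{-N}$; in particular $\int_{\RR^n} |\psi| \lesssim 1$ after the substitution $u = rx$.

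With this in hand, Cauchy--Schwarz in the variable $y$ against the measure $|\psi(x - y)| \, dy$ gives the pointwise inequality
$$ |G(x)|^2 = \left| \int G(y) \, \psi(x - y) \, dy \right|^2 \le \Big( \int |\psi| \Big) \int |G(y)|^2 \, |\psi(x - y)| \, dy \lesssim \int |G(y)|^2 \, |\psi(x - y)| \, dy. $$
I would then integrate this over $B(x_0, \rho)$ and interchange the order of integration, bounding $\int_{B(x_0, \rho)} |\psi(x - y)| \, dx \le |B_\rho| \cdot \sup_{x \in B(x_0, \rho)} |\psi(x - y)|$. Because $\rho \le r^{-1}$, every $x \in B(x_0, \rho)$ satisfies $r|x - x_0| \lesssim 1$, so the decay estimate for $\psi$ yields $\sup_{x \in B(x_0, \rho)} |\psi(x - y)| \lesssim_N r^n (1 + r|x_0 - y|)^{-N}$, which is $r^n$ times a weight $W_{B(x_0, r^{-1})}(y)$ equal to $\sim 1$ on $B(x_0, r^{-1})$ and rapidly decaying away from it. Combining this with $|B_\rho| \, r^n \sim |B_\rho| / |B_{r^{-1}}|$ gives exactly
$$ \int_{B(x_0, \rho)} |G|^2 \lesssim \frac{|B_\rho|}{|B_{r^{-1}}|} \int W_{B(x_0, r^{-1})} |G|^2, $$
as desired.

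This argument is essentially routine and I do not anticipate a real obstacle; the only point that needs a little care is arranging the tails of $\psi$ so that the right-hand side genuinely sees only a neighborhood of $B(x_0, r^{-1})$ --- which is precisely the content that distinguishes Lemma \ref{heis2} from Lemma \ref{heis1}. An alternative would be to run a smooth partition of unity at scale $r^{-1}$ and apply Lemma \ref{heis1} to each piece, but the Fourier supports of the individual pieces are then only essentially, not exactly, balls of radius $\sim r$, so the reproducing-kernel route above is the cleaner one to write up.
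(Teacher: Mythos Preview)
Your proof is correct, but it takes a different route from the paper.  The paper's argument is a three-line reduction to Lemma~\ref{heis1}: choose a function $\psi$ with $\hat\psi$ supported in $B_r$ and $|\psi|\sim 1$ on $B(x_0,r^{-1})$ (rapidly decaying outside), set $W=|\psi|^2$ and $H=\psi G$, note that $\hat H$ is then supported in $B(\xi_0,2r)$ exactly, and apply Lemma~\ref{heis1} to $H$.  In other words, the paper multiplies by a \emph{spatial} bump with compact Fourier support, whereas you convolve with a \emph{frequency} bump and run the pointwise Cauchy--Schwarz reproducing-kernel argument.  Your route is self-contained and avoids Lemma~\ref{heis1} entirely; the paper's route is shorter and makes the localisation mechanism (the weight $W=|\psi|^2$) completely explicit.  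Incidentally, the concern you raise at the end --- that cutting $G$ spatially only gives ``essential'' Fourier support in a ball --- is exactly what the paper sidesteps by taking $\hat\psi$ compactly supported, so that the Fourier support of $\psi G$ is genuinely contained in $B(\xi_0,2r)$.
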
 

\begin{proof} Let $\psi$ be a function with the support of $\hat \psi \subset B_r$ and with $|\psi| \sim 1$ on $B(x_0, r^{-1})$ and rapidly decaying outside $B(x_0, r^{-1})$.  Our weight function will be $W = |\psi|^2$.

Let $H = \psi \cdot G$.  Note that $\hat H$ is supported in $B(\xi_0, 2r)$.  Applying Lemma \ref{heis1} to $H$, we see that

$$ \int_{B(x_0, \rho)} |G|^2 \lesssim \int_{B(x_0, \rho)} |H|^2 \lesssim \frac{|B_\rho|}{|B_{r^{-1}}|} \int  |H|^2 = \frac{|B_\rho|}{|B_{r^{-1}}|} \int  W |G|^2. $$

\end{proof}

Suppose that $B$ is a ball of radius $R^{1/2 + \dm}$ in $\RR^n$, and $V$ is a subspace of $\RR^n$.  Define

\begin{equation} \label{defTBV}  \TTT_{B,V} := \{ (\tv) | \Ttv \cap B \not= \emptyset \textrm{ and } \Angle( G(\theta), V ) \lesssim R^{-1/2 + \dm} \}. \end{equation}

\noindent Let $2B$ denote the ball with the same center as $B$ and twice the radius.

If $g = \sum_{(\tv) \in \TTT_{B,V}} \gtv$, then we will show that $\EP g$ is equidistributed in $B$ along directions transverse to $V$.  More precisely, we have the following lemma.  

\begin{lemma} \label{equid1} If $V \subset \RR^n$ is a subspace, then there is a subspace $V' \subset \RR^n$
with the following properties.

\begin{enumerate}

\item $\Dim V + \Dim V' = n$.  

\item $V'$ is transverse to $V$ in the sense that for any unit vectors $v \in V$ and $v' \in V'$,

$$ \Angle(v, v') \gtrsim 1. $$

\item Suppose that 

$$g = \sum_{(\tv) \in \TTT_{B,V}} \gtv. $$

If $\Pi$ is a plane parallel to $V'$, and $x_0 \in \Pi \cap B$, then for any $\rho \le R$, 

\begin{equation} \label{equibound1} \int_{\Pi \cap B(x_0, \rho^{1/2 + \dm})} | \EP g |^2 \lesssim R^{O(\dm)}  \left( \frac{R^{1/2}}{\rho^{1/2}} \right)^{-\Dim V'} \int_{\Pi \cap 2B}  | \EP g |^2 + \RD(R) \| g \|_{L^2}^2. \end{equation}

\end{enumerate}

\end{lemma}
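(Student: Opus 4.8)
The plan is to reduce property (3) to the local Heisenberg inequality of Lemma \ref{heis2}, after an explicit choice of $V'$ and a frequency-support computation. Write $\delta := R^{-1/2+\dm}$ and $e_n^\perp := \RR^{n-1}\times\{0\}\subset\RR^n$, and choose
$$ V' := (V\cap e_n^\perp)^\perp \cap e_n^\perp, $$
the orthogonal complement of $V\cap e_n^\perp$ taken inside the hyperplane $e_n^\perp$. If $\TTT_{B,V}=\emptyset$ then $g\equiv 0$ and there is nothing to prove, so I assume $\TTT_{B,V}\neq\emptyset$. The key elementary fact is that $\langle G(\omega),e_n\rangle = |(-2\omega_1,\dots,-2\omega_{n-1},1)|^{-1}\gtrsim 1$ for every $\omega\in B^{n-1}$; together with $\Angle(G(\theta),V)\lesssim\delta$, which gives $\langle G(\theta),e_n\rangle\le |\pi_V e_n|+O(\delta)$, this forces $|\pi_V e_n|\gtrsim 1$, so $V$ is quantitatively not contained in $e_n^\perp$. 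Hence $\Dim(V\cap e_n^\perp)=m-1$, and so $\Dim V' = (n-1)-(m-1)=n-m$, which is property (1). For property (2), set $\zeta:=\pi_V e_n/|\pi_V e_n|$; one checks that $\zeta$ spans the orthocomplement of $V\cap e_n^\perp$ inside $V$, and that for any unit $v'\in V'\subset e_n^\perp$ one has $\pi_V v' = \langle v',\zeta\rangle\zeta$, whence $\Dist(v',V)^2 = 1-\langle v',\zeta\rangle^2 \ge \langle \zeta,e_n\rangle^2 = |\pi_V e_n|^2 \gtrsim 1$; this is the required quantitative transversality.

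For property (3) I would first pin down the essential frequency support of $\EP g$. Since the affine map $\omega\mapsto(-2\omega,1)$ carries $\RR^{n-1}$ bi-Lipschitzly onto the hyperplane $\{x_n=1\}$, which (by $|\pi_V e_n|\gtrsim 1$) meets $V$ at an angle $\gtrsim 1$, the condition $\Angle(G(\theta),V)\lesssim\delta$ forces every relevant $\omega$ into $N_{O(\delta)}(A)$, where $A:=\{\omega:(-2\omega,1)\in V\}$ is an affine $(m-1)$-plane in $\RR^{n-1}$. Lifting by $\xi(\omega)=(\omega,|\omega|^2)$ on the bounded set $B^{n-1}$, the set $\xi(N_{O(\delta)}(A)\cap B^{n-1})$ lies in $N_{O(\delta)}(\tilde V)$, where $\tilde V$ is the affine plane spanned by $\xi(A)$ (of dimension $m$ when $m\ge2$, a point when $m=1$); a short computation gives $\tilde V-\tilde V \subseteq (V\cap e_n^\perp)\oplus\RR e_n = (V')^\perp$, with equality when $m\ge2$. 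Since $\widehat{\EP g}$ is, up to a $\RD(R)\|g\|_{L^2}$ error, supported in $N_{R^{-1}}(\bigcup_\theta\xi(\theta))\subset N_{O(\delta)}(\tilde V)$, and since $V'\perp(\tilde V-\tilde V)$, the orthogonal projection of this frequency set onto $V'$ is contained in a single ball of radius $O(\delta)$ in $V'\cong\RR^{n-m}$.

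Now fix a plane $\Pi\parallel V'$ and $x_0\in\Pi\cap B$, and identify $\Pi$ with $V'\cong\RR^{n-m}$ via $y\mapsto x_0+y$. The transversality from property (2), together with $G(\theta)\in N_\delta(V)$, shows every tube $\Ttv$ with $(\tv)\in\TTT_{B,V}$ makes angle $\gtrsim 1$ with $\Pi$, so $\Ttv\cap\Pi$ lies within $O(R^{1/2+\dm})$ of $x_0$; hence $G:=\EP g|_\Pi$ is $\RD(R)\|g\|_{L^2}$ on $\Pi$ outside $B(x_0,O(R^{1/2+\dm}))$, and by the previous paragraph $\hat G$ is essentially supported in a ball of radius $r\sim\delta$. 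For $\rho\le R^{1-C\dm}$ we have $\rho^{1/2+\dm}\le r^{-1}$, so Lemma \ref{heis2} applies and yields
$$ \int_{\Pi\cap B(x_0,\rho^{1/2+\dm})}|\EP g|^2 \lesssim \Big(\frac{\rho^{1/2+\dm}}{r^{-1}}\Big)^{n-m}\!\int W_{B(x_0,r^{-1})}|\EP g|^2 \lesssim R^{O(\dm)}\Big(\frac{R^{1/2}}{\rho^{1/2}}\Big)^{-(n-m)}\!\int_{\Pi\cap 2B}|\EP g|^2 + \RD(R)\|g\|_{L^2}^2, $$
where in the last step I use that the core $B(x_0,r^{-1})\supset B(x_0,R^{1/2-\dm})$ of the weight $W$ and a large multiple of it are contained in $\Pi\cap 2B$ (since $R^{1/2-\dm}\ll R^{1/2+\dm}$), while the rapidly decaying tail of $W$ only sees the $\RD(R)\|g\|_{L^2}$-small part of $\EP g|_\Pi$. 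Finally the leftover range $R^{1-C\dm}<\rho\le R$ is trivial, since there $R^{O(\dm)}(R^{1/2}/\rho^{1/2})^{-(n-m)}\gtrsim 1$ and $\int_{\Pi\cap B(x_0,\rho^{1/2+\dm})}|\EP g|^2\le\int_{\Pi\cap 2B}|\EP g|^2$.

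The main obstacle I expect is the frequency-support bookkeeping of the middle paragraph: making rigorous, with all the $\RD(R)$ errors, the chain ``$G(\theta)$ near $V$ $\Rightarrow$ $\omega$ near the flat plane $A$ $\Rightarrow$ $\xi(\theta)$ near the affine plane $\tilde V$'', and --- crucially --- checking that the several angles involved (between $V$ and $\{x_n=1\}$, and the transverse thickness of $\xi(N_\delta(A))$ about $\tilde V$) are all bounded below by absolute constants, which is exactly what the observation $|\pi_V e_n|\gtrsim 1$ buys us once $\TTT_{B,V}\neq\emptyset$. Everything else is either the elementary linear algebra of the first paragraph or a black-box application of Lemma \ref{heis2} together with the transversality of the tubes to $\Pi$.
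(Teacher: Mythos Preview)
Your proposal is correct and follows essentially the same route as the paper. Your subspace $V' = (V\cap e_n^\perp)^\perp\cap e_n^\perp$ coincides with the paper's $V_\xi^\perp$ (there $V_\xi = V_\omega\oplus\RR e_n$ with $V_\omega$ the linear part of $A_\omega$, and one checks $V_\omega = V\cap e_n^\perp$), and both arguments finish by projecting the frequency support onto $V'$ and invoking Lemma~\ref{heis2}. Two cosmetic differences: (i) for the frequency containment the paper uses the flat affine plane $A_\xi := A_\omega\times\RR$ and the one-line identity $\Dist(\xi(\omega),A_\xi)=\Dist(\omega,A_\omega)$, which is slightly cleaner than your curved $\tilde V=\mathrm{aff}\,\xi(A)$ but lands in the same translate of $(V')^\perp$; (ii) for the transversality the paper proves a sublemma that $\Angle(V,V')=\Angle(V,\RR^{n-1})$ via an explicit orthonormal basis, whereas your computation with $\zeta=\pi_V e_n/|\pi_V e_n|$ gives the same bound more directly.
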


\begin{proof} To prove the lemma, we locate the appropriate space $V'$ and then we appeal to Lemma \ref{heis2}.  

The proof is about relating the direction $G(\theta)$ and the frequency $\xi(\theta)$.  Whenever an object has to do with directions, we label it with a subscript $G$, and whenever it has to do with frequencies, we label it with a subscript $\xi$.   Since $V$ is a set of directions of tubes, we write $V_G$ for $V$.   Consider the set of $\omega \in B^{n-1}$ so that $\Angle ( G(\omega), V_G) \lesssim \alpha$, for some small $\alpha$.  What can we say about the frequencies $\xi (\omega)$?

Recall that $G(\omega) =  \frac{ G_0(\omega)}{|G_0(\omega)|}$, where

$$ G_0(\omega) = ( - 2 \omega_1, ..., -2 \omega_{n-1}, 1). $$

Let $\RR^{n-1} \subset \RR^n$ be the $(x_1, ..., x_{n-1})$-plane.  Examining the formula for $G_0$ and $G$, we see that for every $\omega \in B^{n-1}$, $\Angle( G(\omega), \RR^{n-1} ) \ge c_{\textrm{angle}} > 0$.

We define

$$ \Angle( V_G, \RR^{n-1} ) := \max_{v \in V_G} \Angle (v, \RR^{n-1} ). $$

If $\Angle (V_G, \RR^{n-1})$ is smaller than $(1/2) c_{\textrm{angle}}$, then $G(\omega)$ is never close to $V_G$ for any $\omega \in B^{n-1}$.  In this case $\TTT_{B,V}$ is empty and there is nothing to prove.  Therefore, we can assume from now on that

\begin{equation} \label{angvG1} \Angle (V_G, \RR^{n-1}) \gtrsim 1. \end{equation}

We note that $G(\omega) \in V_G$ if and only if $G_0(\omega) \in V_G$.  We define $A_{\omega} \subset \RR^{n-1}$ by

$$ A_{\omega} := \{ \omega \in \RR^{n-1} | G_0(\omega) \in V_G \}. $$

Since $G_0$ is an affine map, $A_\omega$ is an affine subspace of $\RR^{n-1}$.  Since $V_G$ is quantitatively transverse to $\RR^{n-1}$, and since $G_0(\omega) = (-2 \omega_1, ..., -2 \omega_{n-1}, 1)$, we see that $\Dim A_\omega = \dim V -1$.  We also see that for $\omega \in B^{n-1}$, 

$$ \Dist ( \omega, A_\omega) \lesssim \Angle( G(\omega), V_G ). $$

Next we define $A_\xi$ to be $A_\omega \times \RR \subset \RR^n$, an affine subspace of $\RR^n$.  Since $\xi(\omega) = (\omega_1, ..., \omega_{n-1}, |\omega|^2)$, we see that

$$ \Dist( \xi(\omega), A_\xi) = \Dist (\omega, A_\omega). $$

The spaces $A_\omega$ and $A_\xi$ are affine spaces.  We let $V_\omega$ and $V_\xi$ be the subspaces parallel to $A_\omega$ and $A_\xi$.  The space $V_{\xi}^\perp \subset \RR^n$ is our space $V'$.  The dimension of $V_\xi = \Dim A_\omega + 1 = \Dim V$, and so $\Dim V' + \Dim V = n$ as desired.  

We let $\pi_{V'}$ be the orthogonal projection from $\RR^n$ to $V' = V_\xi^\perp$.  Combining our estimates above, we see that

$$ \pi_{V'} \left( \{ \omega \in B^{n-1} | \Angle( G(\omega), V_G ) \le \alpha \} \right) \subset \textrm{ a ball of radius } \lesssim \alpha. $$

Next we want to see that $V_G$ and $V_\xi^\perp$ are quantitatively transverse.  We define

$$ \Angle (V_G , V_\xi^\perp ) := \min_{v \in V_G, w \in v_\xi^\perp} \Angle(v,w). $$

\begin{sublemma} 
$$\Angle(V_G, \RR^{n-1}) = \Angle(V_G, V_\xi^\perp).  $$  
\end{sublemma}

In particular, in the non-vacuous case that $\Angle(V_G, \RR^{n-1}) \gtrsim 1$, we see that $V = V_G$ and $V' = V_\xi^\perp$ are quantitatively transverse.

\begin{proof} The intersection $V_G \cap G_0(\RR^{n-1})$ is an affine space parallel to $V_\omega$.  Let $v_G \in V_G$ be a unit vector perpendicular to $V_\omega$.  Let $v_1, ..., v_{m-1}$ be an orthonormal basis of $V_\omega$.  Then $v_1, ..., v_{m-1}, v_G$ is an orthonormal basis of $V_G$.

Let $e_n$ be the $n^{th}$ coordinate unit vector.  We see that $v_1, ..., v_{m-1}, e_n$ is an orthonormal basis for $V_\xi$.  We also see that $V_{\xi}^\perp \subset \RR^{n-1}$ and $V_\xi^\perp = V_\omega^\perp \subset \RR^{n-1}$.  

Since $v_1, ..., v_{m-1} \subset V_\omega \subset \RR^{n-1}$,

$$\Angle(V_G, \RR^{n-1}) = \Angle( v_G, \RR^{n-1}).$$

Since $v_G$ is perpendicular to $V_\omega$, we see that the projection of $v_G$ to $\RR^{n-1}$ actually lies in $V_{\xi}^\perp$, and so 

$$ \Angle(v_G, \RR^{n-1}) = \Angle(v_G, V_\xi^\perp). $$

But since $v_1, ..., v_{m-1}$ are in $V_\xi$, we see that $v_G$ is the vector in $V_G$ which makes the smallest angle with $V_\xi^\perp$, and so

$$ \Angle(v_G, V_\xi^\perp) = \Angle(V_G, V_\xi^\perp). $$

\end{proof}

Let $\Pi$ be an $(n-m)$-plane parallel to $V'$ passing through $B$.  We know that $(\EP \gtv)^\wedge$ is supported in $\xi(\theta)$.  The restriction to $\Pi$ of $\EP \gtv$ has Fourier transform supported in $\pi_{V'} (\xi(\theta))$.  Now for all $(\theta, v) \in \TTT_{B,V}$, $\Angle( G(\theta), V ) \lesssim R^{-1/2 + \dm}$, and so all $\pi_{V'} (\xi(\theta))$ lie in a single ball of radius $\lesssim R^{-1/2 + \dm}$.  Therefore, if we view $\EP g$ as a function $G: \Pi \rightarrow \CC$, its Fourier transform is supported in a ball of radius $\lesssim R^{-1/2 + \dm}$.  We apply Lemma \ref{heis2}, giving:

\begin{equation} \label{equidonPi} \int_{\Pi \cap B(x_0, \rho^{1/2 + \dm})} | \EP g |^2 \lesssim \left( \frac{R^{1/2 - \dm}}{\rho^{1/2 + \dm}} \right)^{-\Dim V'} \int_\Pi W_{B(x_0, R^{1/2 - \dm})}  | \EP g |^2  \end{equation}

$$ \lesssim R^{O(\dm)}  \left( \frac{R^{1/2}}{\rho^{1/2}} \right)^{-\Dim V'} \int_\Pi W_B  | \EP g |^2. $$

Finally, $\EP g = \sum_{(\tv) \in \TTT_{B,V}} \EP \gtv$.  Each $\EP \gtv$ is essentially supported on $\Ttv$.  Since $\Ttv$ is transverse to $\Pi$ and intersects $B$, we see that if $x \in \Pi \setminus 2B$, $| \EP g(x) | \le \RD(R) \| g \|_{L^2}$.  So

$$ \int_\Pi W_B  | \EP g |^2 \le \int_{\Pi \cap 2B} | \EP g |^2 + \RD(R) \| g \|_{L^2}^2. $$

\end{proof} 

Now we are ready to prove Lemma \ref{equid2}.  

\begin{proof} Since $(\tv) \in \TTT_Z$, and $\Ttv \cap B$ is non-empty, we know that for any $z \in Z \cap 2B$,

$$ \Angle(T_z Z , G(\theta)) \lesssim R^{-1/2 + \dm}. $$

Let $V$ be a subspace of lowest possible dimension so that, for all $(\tv) \in \TTT_{B,Z}$, 

$$ \Angle( V, G(\theta)) \lesssim R^{-1/2 + \dm}. $$

Let $V'$ be the subspace given by Lemma \ref{equid1}.  We know that $\Dim V + \Dim V' = n$, and we know that $V'$ is quantitatively transverse to $V$.  By (\ref{equidonPi}), we also know that for any plane $\Pi$ parallel to $V'$, 

\begin{equation} \label{intgrhoinPi} \int_{\Pi \cap B(x_0, \rho^{1/2 + \dm})} | \EP g |^2 \lesssim R^{O(\dm)}  \left( \frac{R^{1/2}}{\rho^{1/2}} \right)^{-\Dim V'} \int_{\Pi \cap 2B}  | \EP g |^2 + \RD(R) \| g \|_{L^2}^2. \end{equation}

We claim that for each $z \in Z \cap B$, $T_z Z$ is quantitatively transverse to $V'$.  If this is not the case, it means that there exists a point $z \in Z$ and a subspace $W \subset T_z Z$ with

$$ \Dim W > \Dim Z - \Dim V, $$

so that for each non-zero $w \in W$,

$$ \Angle(w, V') \le o(1). $$

Since $V$ and $V'$ are transverse, this angle condition guarantees that

$$ \Angle(w, V) \gtrsim 1. $$

Because of this angle condition, we can construct a linear map $L: \RR^n \rightarrow V$ so that $L$ restricted to $V$ is the identity, $L$ restricted to $W$ is zero, and $|L| \lesssim 1$.  Recall that for each $(\tv) \in \TTT_{B, Z}$, $\Angle( G(\theta), V) \lesssim R^{-1/2 + \dm}$, and so

$$ | L ( G(\theta) ) - G(\theta) | \lesssim R^{-1/2 + \dm}. $$

On the other hand, we know that $G(\theta) \subset N_{R^{-1/2 + \dm}}(T_z Z) \cap B(1)$, and so $L(G(\theta))$ lies in

$$ L( N_{R^{-1/2 + \dm}}(T_z Z) \cap B(1) ) \subset N_{C R^{-1/2 + \dm}} ( L ( T_z Z) ). $$

This shows that for all $(\theta, v) \in \TTT_{B, Z}$, 

$$ \Angle ( G(\theta), L (T_z Z) ) \lesssim R^{-1/2 + \dm}. $$

But since $L$ vanishes on $W$, $L ( T_z Z )$ is a subspace of dimension at most $\Dim Z - \Dim W < \Dim V$.  This contradicts our hypothesis that $V$ has minimal dimension.  This finishes the proof of our claim that for each $z \in Z \cap 2B$, $T_z Z$ is quantitatively transverse to $V'$.

Suppose that $\Pi$ is a plane parallel to $V'$ and intersecting $B$.  Given the transversality we just proved, it follows that

$$ \Pi \cap N_{\rho^{1/2 + \dm}}(Z) \cap B \subset N_{C \rho^{1/2 + \dm}}(\Pi \cap Z) \cap \Pi \cap 2B. $$

Note that $\Pi \cap Z$ is itself a transverse complete intersection of dimension $\Dim V' + \Dim Z - n$.  
Now the set, $N_{C \rho^{1/2 + \dm}}(\Pi \cap Z) \cap \Pi \cap 2B$, can be covered by $R^{O(\dm)} 
\left( \frac{R^{1/2}}{\rho^{1/2}} \right)^{\Dim V' + \Dim Z - n}$ balls in $\Pi$ of radius $\rho^{1/2 + \dm}$ (cf. \cite{W}).  Applying (\ref{intgrhoinPi}) on each of these balls and summing, we get the bound

\begin{equation*}
\int_{\Pi \cap N_{\rho^{1/2 + \dm}}(Z) \cap B} | \EP g |^2 \lesssim R^{O(\dm)}  \left( \frac{R^{1/2}}{\rho^{1/2}} \right)^{-(n-m)} \int_{\Pi \cap 2B}  | \EP g |^2 + \RD(R) \| g \|_{L^2}^2. 
\end{equation*}

Finally, integrating over planes $\Pi$ parallel to $V'$ (using Fubini) we get the desired bound:

\begin{equation*}
\int_{B \cap N_{\rho^{1/2 + \dm}}(Z)} | \EP g |^2 \lesssim R^{O(\dm)}  \left( \frac{R^{1/2}}{\rho^{1/2}} \right)^{-(n-m)} \int_{2B}  | \EP g |^2 + \RD(R) \| g \|_{L^2}^2. 
\end{equation*}

\end{proof}

\section{Adjusting a wave packet decomposition to a smaller ball} \label{secsmallerball}

Suppose that $B(y, \rho) \subset B_R$ for some radius $\rho$ in the range $R^{1/2} < \rho < R$, and we want to decompose $f$ into wave packets associated to the ball $B(y, \rho)$.  How does the new wave packet decomposition relate to the old wave packet decomposition?  

If the center $y$ is not at the origin, then we introduce new coordinates

$$\tx = x - y. $$

We define

$$ \psi_y(\omega) := e^{i ( y_1 \omega_1 + ... + y_{n-1} \omega_{n-1} + y_n |\omega|^2)}. $$

Then we write

$$ \EP f(x) = \int e^{i ( x_1 \omega_1 + ... + x_{n-1} \omega_{n-1} + x_n |\omega|^2)} f(\omega) = 
\int e^{i ( \tx_1 \omega_1 + ... + \tx_{n-1} \omega_{n-1} + \tx_n |\omega|^2)} e^{i \psi_y(\omega)} f(\omega). $$

For any function $f$, we use the notation 

$$ \tf(\omega) = e^{i \psi_y(\omega)} f(\omega). $$

In this notation, we now have

$$ \EP f(x) = \EP \tf (\tx). $$

Next we decompose $\tf$ into wave packets adapted to the ball $B_\rho$.  We follow the construction of wave packets in Section \ref{basicwavepack}, except with the radius $R$ replaced by $\rho$.  We cover $B^{n-1}$ with caps $\tith$ of radius $\rho^{-1/2}$.  We cover $\RR^{n-1}$ by finitely overlapping balls of radius $\sim \rho^{\frac{1 + \dt}{2}}$, centered at vectors $\tiv \in \rho^{\frac{1 + \dt}{2}} \ZZ^{n-1}$.  And we decompose $\tf$ as 

$$ \tf = \sum_{\ttv} \tftv + \RD(R) \| f \|_{L^2}, $$

\noindent where $\tftv$ is supported in $\tith$ and its Fourier transform is essentially supported in $B(\tiv, \rho^{\frac{1 + \dt}{2}})$. 
For each $(\ttv)$, $\EP \tftv$ is essentially supported on a tube $\tTtv$ of radius $\rho^{1/2 + \dt}$ and length $\rho$.  In the $\tx$ coordinates, this tube is contained in $B_\rho$, while in the original $x$ coordinates, this tube is contained in $B(y, \rho)$. 

How does the original wave packet decomposition $f = \sum_{\tv} \ftv$ relate to the new one?  The first question we study is, if we expand $\ftv$ in wave packets at scale $B_\rho$, $(\ftv)^\sim = \sum_{\ttv} (\ftv)^\sim_{\ttv}$, then which $(\ttv)$ can have a significant contribution?  We answer this question in Lemma \ref{scalesftvtith}.  Before stating the lemma, we need a couple definitions.

For a given $y$ and $\omega$ we define 

$$\bar v(\omega, y) := \partial_\omega \psi_y(\omega),$$

and we compute

$$ \bar v(\omega, y) = \partial_\omega \psi_y(\omega) = \partial_\omega ( y_1 \omega_1 + ... + y_{n-1} \omega_{n-1} + y_n |\omega|^2) $$

$$ = (y_1 + 2 \omega_1 y_n, ..., y_{n-1} + 2 \omega_{n-1} y_n) = y' + 2 y_n \omega. $$

\noindent (Here we use the notation $y' = (y_1, ..., y_{n-1})$.)  If $\omega_\theta$ denotes the center of a cap $\theta$, we also write $\bar v(\theta, y)$ for $\bar v(\omega_\theta, y)$.  

Define

\begin{equation} \label{deftiTTTthv}
\tilde \TTT_{\tv} := \{ (\ttv) : \Dist(\theta, \tith) \lesssim \rho^{-1/2} \textrm{ and } |v + \bar v(\theta, y) - \tiv| \lesssim R^{1/2 + \delta/2} \}.
\end{equation}

\begin{lemma} \label{scalesftvtith} The function $(\ftv)^\sim$ is concentrated in wave packets from $\tilde \TTT_{\tv}$.  In other words,

$$ (\ftv)^{\sim} = \sum_{(\ttv) \in \tilde \TTT_{\tv}} (\ftv)^{\sim}_{\ttv} + \RD(R) \| f \|_{L^2}. $$

\end{lemma}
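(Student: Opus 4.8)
The plan is to follow the Fourier support of $(\ftv)^{\sim} = e^{i\psi_y}\ftv$ through the scale-$\rho$ wave packet construction of Section~\ref{basicwavepack}, and to show that the pieces $(\ftv)^{\sim}_{\ttv}$ indexed by $(\ttv)\notin\tilde \TTT_{\tv}$ are all of size $\RD(R)\|f\|_{L^2}$. Since the scale-$\rho$ construction already gives $(\ftv)^{\sim} = \sum_{(\ttv)}(\ftv)^{\sim}_{\ttv} + \RD(R)\|f\|_{L^2}$ with the sum over all pairs, and since there are only polynomially many pairs $(\ttv)$, this will suffice.

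First I would record two facts about $(\ftv)^{\sim}$ itself. It is supported on the cap $\theta$ of radius $R^{-1/2}$, since $\tilde\psi_\theta$ is supported there and the factor $e^{i\psi_y}$ does not move the support. And because $\psi_y(\omega) = y'\cdot\omega + y_n|\omega|^2$ is quadratic in $\omega$, the Taylor expansion about the centre $\omega_\theta$ is exact:
$$\psi_y(\omega) = \psi_y(\omega_\theta) + \bar v(\theta,y)\cdot(\omega-\omega_\theta) + y_n|\omega-\omega_\theta|^2,$$
where $\bar v(\theta,y) = \partial_\omega\psi_y(\omega_\theta)$. Letting $\eta_\theta$ be a smooth bump equal to $1$ on the support of $\tilde\psi_\theta$ and adapted to $\theta$ at scale $R^{-1/2}$, I would write $(\ftv)^{\sim} = c_\theta\, e^{i\bar v(\theta,y)\cdot\omega}\, F(\omega)\,\ftv(\omega)$, where $c_\theta$ is a unimodular constant and $F(\omega) := \eta_\theta(\omega)\,e^{iy_n|\omega-\omega_\theta|^2}$. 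Since $B(y,\rho)\subset B_R$ forces $|y_n|\lesssim R$, the dilation $\omega = \omega_\theta + R^{-1/2}u$ turns $F$ into a standard bump on the unit ball with uniformly bounded derivatives, so $\hat F$ is rapidly decaying outside $B(0,CR^{1/2})$. Combining this with the fact that $\widehat{\ftv}$ is essentially supported in $B(v, R^{1/2+\dt/2})$, and using that the modulation $e^{i\bar v(\theta,y)\cdot\omega}$ just translates Fourier supports by $\bar v(\theta,y)$, I would conclude that $(\ftv)^{\sim}$ has Fourier transform essentially supported in $B\big(v+\bar v(\theta,y),\,CR^{1/2+\dt/2}\big)$, up to an error of size $\RD(R)\|f\|_{L^2}$.

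Next I would feed this into $(\ftv)^{\sim}_{\ttv} = \tilde\psi_{\tith}\big[\eta_{\tiv}^\vee * (\psi_{\tith}(\ftv)^{\sim})\big]$ and read off the two conditions. For the position condition: $\psi_{\tith}(\ftv)^{\sim}$ is supported on $\tith\cap\theta$, so it vanishes identically — and hence so does $(\ftv)^{\sim}_{\ttv}$ — unless $\tith$ meets $\theta$, which forces $\Dist(\theta,\tith)\lesssim\rho^{-1/2}$. For the frequency condition: $[\psi_{\tith}(\ftv)^{\sim}]^\wedge = \widehat{\psi_{\tith}}*[(\ftv)^{\sim}]^\wedge$, and since $\widehat{\psi_{\tith}}$ is concentrated at the scale $\rho^{1/2}\le R^{1/2}$ this is still essentially supported in $B(v+\bar v(\theta,y),\,CR^{1/2+\dt/2})$; convolving with $\eta_{\tiv}^\vee$, whose Fourier transform is supported on $B(\tiv,\sim\rho^{(1+\dt)/2})$, then yields a function of size $\RD(R)\|f\|_{L^2}$ unless that ball meets $B(v+\bar v(\theta,y),CR^{1/2+\dt/2})$, i.e. unless $|v+\bar v(\theta,y)-\tiv|\lesssim R^{1/2+\dt/2}$. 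These are precisely the conditions defining $\tilde \TTT_{\tv}$, so every piece outside $\tilde \TTT_{\tv}$ is $\RD(R)\|f\|_{L^2}$; summing and absorbing the polynomial count of pairs into the rapidly decaying function gives the stated identity.

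I expect the only step that is not pure bookkeeping to be the handling of the quadratic phase $e^{iy_n|\omega-\omega_\theta|^2}$. The key observation will be that on the cap $\theta$ this phase has gradient only $\lesssim |y_n|R^{-1/2}\lesssim R^{1/2}$, which is swallowed by the frequency-uncertainty scale $R^{1/2+\dt/2}$ already carried by $\ftv$; thus the phase merely fattens the Fourier support of $(\ftv)^{\sim}$ from a ball of radius $R^{1/2+\dt/2}$ to a comparable one, while the shift of the centre from $v$ to $v+\bar v(\theta,y)$ is exactly the linearization $\partial_\omega\psi_y(\omega_\theta)$ of $\psi_y$. The remaining manipulations are the usual tracking of rapidly-decaying tails coming from the wave packet construction.
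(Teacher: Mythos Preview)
Your proof is correct and follows essentially the same approach as the paper: identify the spatial support as $\theta$ to handle the $\tith$-condition, and show that the Fourier transform of $(\ftv)^{\sim}$ is essentially supported in $B\big(v+\bar v(\theta,y),\,CR^{1/2+\dt/2}\big)$ to handle the $\tiv$-condition. The only difference is presentational: where the paper writes $(\eta_\theta e^{i\psi_y})^\wedge * (\ftv)^\wedge$ and invokes stationary phase for the first factor, you make the same computation explicit by Taylor-expanding the quadratic phase $\psi_y$ about $\omega_\theta$, peeling off the linear part as a modulation, and bounding the remaining quadratic phase directly via the rescaling $\omega=\omega_\theta+R^{-1/2}u$.
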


\begin{proof} Since $\ftv$ is supported in $\theta$, the support of $(\ftv)^{\sim}$ is clearly contained in 

$$ \cup \{ \tith : \Dist(\tith, \theta) \lesssim \rho^{-1/2} \}. $$

The main point is to check that the Fourier transform of $(\ftv)^{\sim}$ is essentially supported in a ball around $v + \bar v(\theta,y)$ of radius $\lesssim R^{1/2 + \dt/2}$.  Let $\eta_\theta$ be a bump function which is 1 on $\theta$ and decays to 0 outside of $2 \theta$.  Then

$$ \left( e^{i \psi_y(\omega)} \ftv \right)^\wedge =  \left( \eta_\theta e^{i \psi_y(\omega)} \cdot \ftv \right)^\wedge = 
\left( \eta_\theta e^{i \psi_y(\omega)} \right)^\wedge * \left( \ftv \right)^\wedge . $$

Now $(\ftv)^\wedge$ is rapidly decaying outside of $B(v, R^{1/2 + \dt/2})$.  On the other hand, a stationary phase argument shows that $\left( \eta_\theta e^{i \psi_y(\omega)} \right)^\wedge$ is rapidly decaying outside of $B( \bar v(\theta, y), R^{1/2})$.  (To see this, it helps to note that on the support of $\eta_\theta$, $\partial_\omega \psi_y$ lies in a ball around $\bar v(\theta, y)$ of radius $\lesssim R^{1/2}$.)
\end{proof}

Next we explore the geometric features of a tube $\tTtv$ with $(\ttv) \in \tilde \TTT_{\tv}$.

\begin{lemma} \label{tubetildetube} If $(\ttv) \in \tilde \TTT_{\tv}$, then the tube $\tTtv$ obeys the following geometric estimates:
 
\begin{equation} \label{tubetildetubedist} \HausDist(\tTtv, \Ttv \cap B(y, \rho) ) \lesssim R^{1/2 + \dt}. \end{equation}

\noindent and 

\begin{equation} \label{tubetildetubeangle} \Angle (G(\theta), G(\tith) ) \lesssim \rho^{-1/2}. \end{equation}

\end{lemma}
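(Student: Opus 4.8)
The plan is to pass to the translated coordinates $\tx = x - y$ introduced above, in which both tubes are cut out by affine inequalities, and then simply compare coefficients. Recall from (\ref{defTtv}) that
$$ \Ttv = \{ x \in B_R : | x' + 2 x_n \omega_\theta + v | \le R^{1/2+\dt} \}, $$
and that, since $\tTtv$ is built exactly as in Section \ref{basicwavepack} but with $R$ replaced by $\rho$,
$$ \tTtv = \{ \tx \in B_\rho : | \tx' + 2 \tx_n \omega_{\tith} + \tiv | \le \rho^{1/2+\dt} \}, $$
where $\omega_\theta,\omega_{\tith}$ are the centers of $\theta,\tith$. If $x = \tx + y$, then with $y' = (y_1,\dots,y_{n-1})$ and $\bar v(\theta,y) = y' + 2 y_n \omega_\theta$ a one-line computation gives
$$ x' + 2 x_n \omega_\theta + v = \tx' + 2 \tx_n \omega_\theta + \bigl( v + \bar v(\theta,y) \bigr), $$
so that, in the $\tx$-variable, $\Ttv \cap B(y,\rho) = \{ \tx \in B_\rho : | \tx' + 2 \tx_n \omega_\theta + (v + \bar v(\theta,y)) | \le R^{1/2+\dt} \}$. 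Thus the two sets in (\ref{tubetildetubedist}) are cylinders of identical algebraic form, with axes governed by $\omega_\theta$ resp.\ $\omega_{\tith}$, ``centers'' $v + \bar v(\theta,y)$ resp.\ $\tiv$, and radii $R^{1/2+\dt}$ resp.\ $\rho^{1/2+\dt}$.

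I would first dispose of the angle bound (\ref{tubetildetubeangle}). Membership in $\tilde \TTT_{\tv}$ forces $\Dist(\theta,\tith) \lesssim \rho^{-1/2}$, hence $|\omega_\theta - \omega_{\tith}| \lesssim \rho^{-1/2}$; since $\omega \mapsto G(\omega)$ is smooth, hence Lipschitz, on the bounded domain $B^{n-1}$, we get $\Angle(G(\theta), G(\tith)) \lesssim |\omega_\theta - \omega_{\tith}| \lesssim \rho^{-1/2}$ (the caps $G(\theta),G(\tith)$ themselves have radius $\le \rho^{-1/2}$, which only helps).

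For the Hausdorff bound (\ref{tubetildetubedist}) I would estimate the difference of the two affine forms
$$ \ell(\tx) := \tx' + 2 \tx_n \omega_\theta + (v + \bar v(\theta,y)), \qquad \tilde\ell(\tx) := \tx' + 2 \tx_n \omega_{\tith} + \tiv $$
on $B_\rho$. For $\tx \in B_\rho$ one has $|\tx_n| \le \rho$, so $|2 \tx_n (\omega_\theta - \omega_{\tith})| \lesssim \rho \cdot \rho^{-1/2} = \rho^{1/2} \le R^{1/2+\dt}$; combined with $| v + \bar v(\theta,y) - \tiv | \lesssim R^{1/2+\dt/2}$ from the definition of $\tilde \TTT_{\tv}$, this gives $| \ell(\tx) - \tilde\ell(\tx) | \lesssim R^{1/2+\dt}$ for every $\tx \in B_\rho$. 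Hence if $\tx \in \tTtv$ then $|\ell(\tx)| \le \rho^{1/2+\dt} + C R^{1/2+\dt} \lesssim R^{1/2+\dt}$, so the point $x = \tx+y \in B(y,\rho)$ lies within $O(R^{1/2+\dt})$ of $\Ttv \cap B(y,\rho)$; conversely, if $x \in \Ttv \cap B(y,\rho)$ then $\tx = x-y \in B_\rho$ has $|\tilde\ell(\tx)| \lesssim R^{1/2+\dt}$, and moving $\tx'$ radially onto the cross-sectional core of $\tTtv$ displaces $\tx$ by $\lesssim R^{1/2+\dt}$ while landing in $\tTtv$. Both inclusions give $\HausDist(\tTtv, \Ttv \cap B(y,\rho)) \lesssim R^{1/2+\dt}$.

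The content here is genuinely elementary, so the only thing requiring care — and what I would flag as the ``main obstacle'' — is the bookkeeping of turning the affine-inequality descriptions into an honest Hausdorff-distance statement: keeping the $x \leftrightarrow \tx$ translation and the ball restrictions $B(y,\rho)$, $B_\rho$ consistent, and checking that the final ``move radially onto the core'' step does not leave the relevant ball. This last point causes no trouble: in the range $\rho \lesssim R^{1/2+\dt}$ the claimed bound is trivial, since both tubes lie in a ball of radius $\lesssim R^{1/2+\dt}$, while if $\rho \gg R^{1/2+\dt}$ the displacement of size $O(R^{1/2+\dt})$ stays inside $B_\rho$ up to a constant-factor enlargement that is absorbed into the $\lesssim$.
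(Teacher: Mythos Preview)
Your proof is correct and follows essentially the same approach as the paper: rewrite both tubes in the translated $\tilde x$-coordinates, compare the affine defining inequalities using the two conditions in the definition of $\tilde \TTT_{\tv}$, and read off the Hausdorff and angle bounds. If anything, you are slightly more explicit than the paper about the last bookkeeping step (turning the pointwise bound on $|\ell - \tilde\ell|$ into a Hausdorff-distance inclusion, and handling the edge case of small $\rho$); the paper simply says ``comparing (\ref{TtvcapBy}) and (\ref{tTtvcapBy})'' and leaves it there.
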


\begin{proof} We recall the definition of $\Ttv$ from (\ref{defTtv}):

$$\Ttv := \{ (x', x_n) \in B_R \textrm{ so that } |x' + 2 x_n \omega_\theta + v | \le R^{1/2 + \dt} \}. $$

In the coordinates $\tx$, since $x = \tx + y$ and $y' + 2 y_n \omega_\theta = \bar v(\theta, y)$, 

\begin{equation} \label{TtvcapBy} \Ttv \cap B(y, \rho) = \{ (\tx', \tx_n) \in B_\rho \textrm{ so that } |\tx' + 2 \tx_n \omega_\theta + \bar v(\theta, y) + v | \le R^{1/2 + \dt} \}. \end{equation}

On the other hand, 

\begin{equation} \label{tTtvcapBy} \tTtv :=  \{ (\tx', \tx_n) \in B_\rho \textrm{ so that } |\tx' + 2 \tx_n \omega_{\tith} + \tiv | \le \rho^{1/2 + \dt} \}. \end{equation}

By the definiton of $\tilde \TTT_{\tv}$, $\Dist(\tith, \theta) \le \rho^{-1/2}$ and so $| \omega_\theta - \omega_{\tith}| \lesssim \rho^{-1/2}$.  Since $|\tx_n| \le \rho$, $| 2 \tx_n \omega_\theta - 2 \tx_n \omega_{\tith}| \lesssim \rho^{1/2}$.
By the definition of $\tilde \TTT_{\tv}$, $| v + \bar v(\theta, y) - \tiv | \lesssim R^{1/2 + \dt/2}$.  Comparing (\ref{TtvcapBy}) and (\ref{tTtvcapBy}), we see that $\HausDist(\Ttv \cap B(y, \rho), \tTtv) \lesssim R^{1/2 + \dt}$ as desired.

Since $\Dist(\tith, \theta) \le \rho^{-1/2}$ it follows that $\Angle ( G(\theta), G(\tith) ) \lesssim \rho^{-1/2}$.

\end{proof}

Many different $(\theta, v)$ lead to essentially the same set $\tilde \TTT_{\tv}$.  If $\Dist(\theta_1, \theta_2) \le \rho^{-1/2}$, and $| v_1 + \bar v(\theta_1, y) - v_2 - \bar v(\theta_2, y) | \le R^{1/2 + \dt/2}$, then $\tilde \TTT_{\theta_1, v_1}$ and $\tilde \TTT_{\theta_2, v_2}$ are essentially the same.  We can organize the possible pairs $(\tv)$ into equivalence classes in the following way.  If $\tith$ is one of our caps of radius $\rho^{-1/2}$, and $w \in R^{1/2 + \delta/2} \ZZ^{n-1}$, then we define

\begin{equation} \label{defTTTtithtw} \TTT_{\tith, w} := \{ (\theta, v) : \Dist(\theta, \tith) \lesssim \rho^{-1/2} \textrm{ and } |v + \bar v(\theta, y) - w | \lesssim R^{1/2 + \delta/2} \} . \end{equation}

\noindent If $(\theta_1, v_1)$ and $(\theta_2, v_2)$ lie in the same set $\TTT_{\tith, w}$, then $\tilde \TTT_{\theta_1, v_1}$ and $\tilde \TTT_{\theta_2, v_2}$ are essentially the same.  They are both contained in (and essentially equal to)

\begin{equation} \label{deftiTTTtithtw} \tilde \TTT_{\tith, w} := \{ (\tilde \theta_1, \tilde v) : \Dist(\tilde \theta_1, \tith) \lesssim \rho^{-1/2} \textrm{ and } |w - \tilde v | \lesssim R^{1/2 + \delta/2} \} . \end{equation}

Now Lemma \ref{scalesftvtith} gives the following corollary.

\begin{lemma} \label{scalestithv} If $g$ is concentrated in wave packets in $\TTT_{\tith, w}$, then $\tilde g$ is concentrated in wave packets in $\tilde \TTT_{\tith, w}$.  In other words, if

$$ g = \sum_{(\tv) \in \TTT_{\tith, w}} \gtv + \RD(R) \| g \|_{L^2}, $$

then

$$ \tilde g = \sum_{(\ttv) \in \tilde \TTT_{\tith, w}} \tgtv + \RD(R) \| g \|_{L^2}. $$

\end{lemma}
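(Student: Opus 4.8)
The plan is to deduce this from Lemma~\ref{scalesftvtith}, applied to one wave packet at a time, together with the elementary inclusion $\tilde\TTT_{\tv}\subseteq\tilde\TTT_{\tith,w}$ already recorded in the paragraph preceding the statement. The key observation is that the tilde operation $g\mapsto\tilde g=e^{i\psi_y}g$ is multiplication by a unimodular function, so it is linear, preserves $\|g\|_{L^2}$, and turns a term of the form $\RD(R)\|g\|_{L^2}$ into another such term. Using this on the hypothesis $g=\sum_{(\tv)\in\TTT_{\tith,w}}\gtv+\RD(R)\|g\|_{L^2}$ gives
$$ \tilde g=\sum_{(\tv)\in\TTT_{\tith,w}}(\gtv)^\sim+\RD(R)\|g\|_{L^2}, $$
and since $\TTT_{\tith,w}$ contains only polynomially many (at most $R^{O(n)}$) pairs $(\tv)$, it suffices to show that each $(\gtv)^\sim$ with $(\tv)\in\TTT_{\tith,w}$ is concentrated in wave packets from $\tilde\TTT_{\tith,w}$.

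Next I would invoke Lemma~\ref{scalesftvtith} (applied with the ambient function equal to $g$): for every such $(\tv)$,
$$ (\gtv)^\sim=\sum_{(\ttv)\in\tilde\TTT_{\tv}}(\gtv)^\sim_{\ttv}+\RD(R)\|g\|_{L^2}. $$
The remaining point is the set inclusion. If $(\tv)\in\TTT_{\tith,w}$ then by definition $\Dist(\theta,\tith)\lesssim\rho^{-1/2}$ and $|v+\bar v(\theta,y)-w|\lesssim R^{1/2+\delta/2}$; and if $(\tith_1,\tiv)\in\tilde\TTT_{\tv}$ then $\Dist(\theta,\tith_1)\lesssim\rho^{-1/2}$ and $|v+\bar v(\theta,y)-\tiv|\lesssim R^{1/2+\delta/2}$. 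Two applications of the triangle inequality give $\Dist(\tith_1,\tith)\lesssim\rho^{-1/2}$ and $|\tiv-w|\lesssim R^{1/2+\delta/2}$, i.e. $(\tith_1,\tiv)\in\tilde\TTT_{\tith,w}$; hence $\tilde\TTT_{\tv}\subseteq\tilde\TTT_{\tith,w}$. Since the set of $(\ttv)$ lying outside $\tilde\TTT_{\tith,w}$ is therefore contained in the set lying outside $\tilde\TTT_{\tv}$, the displayed identity for $(\gtv)^\sim$ continues to hold with $\tilde\TTT_{\tv}$ replaced by $\tilde\TTT_{\tith,w}$.

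Finally I would reassemble: substituting these per-term identities back into the expression for $\tilde g$, interchanging the two sums, collecting like $(\ttv)$-components (only $R^{O(n)}$ of them can occur), and using that extracting the scale-$\rho$ wave packet component is linear up to $\RD(R)$ errors, one arrives at $\tilde g=\sum_{(\ttv)\in\tilde\TTT_{\tith,w}}\tgtv+\RD(R)\|g\|_{L^2}$, which is the assertion.

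I do not expect a genuine obstacle here — this is a bookkeeping corollary of Lemma~\ref{scalesftvtith}. The only points that need (mild) care are the two highlighted above: that the tilde operation preserves rapidly decaying errors, because it is multiplication by a unimodular function, and that these errors survive summation over the polynomially many relevant pairs $(\tv)$ and $(\ttv)$, which is exactly where one uses that $\RD(R)$ decays faster than every fixed power of $R$, so that $R^{O(n)}\RD(R)=\RD(R)$.
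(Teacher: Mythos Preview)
Your proposal is correct and matches the paper's approach: the paper states this lemma as an immediate corollary of Lemma~\ref{scalesftvtith}, relying precisely on the inclusion $\tilde\TTT_{\tv}\subseteq\tilde\TTT_{\tith,w}$ recorded just before the statement, and you have simply written out the bookkeeping that makes this explicit.
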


We also note that the sets $\TTT_{\tith, w}$ are essentially disjoint, and their union contains all the possible pairs $(\tv)$.  Similarly, the sets $\tilde \TTT_{\tith, w}$ are essentially disjoint and their union contains all possible pairs $(\ttv)$.  With this in mind, we define

\begin{equation} \label{defgtithv} 
g_{\tith, w} := \sum_{(\tv) \in \TTT_{\tith, w}} \gtv. 
\end{equation}

\begin{equation} \label{deftigtithv} 
(\tilde g)_{\tith, w} := \sum_{(\ttv) \in \tilde \TTT_{\tith, w}} (\tilde g)_{\ttv}. 
\end{equation}

For any $g$, we get a decomposition $g = \sum_{\tith, w} g_{\tith, w}$ obeying with

\begin{equation} \label{orthgtithv} \| g \|_{L^2}^2 \sim \sum_{(\tith, w)} \| g_{\tith, w} \|_{L^2}^2, \end{equation}

Similarly, for any $\tg$, we get a decomposition $\tg = \sum_{\tith, w} \tg_{\tith, w}$ with

\begin{equation} \label{orthtigtithv} \| \tilde g \|_{L^2}^2 \sim \sum_{(\tith, w)} \| \tilde g_{\tith, w} \|_{L^2}^2. \end{equation}

By Lemma \ref{tubetildetube}, for all the pairs $(\theta, v) \in \TTT_{\tith, w}$, the sets $\Ttv \cap B(y, \rho)$ are essentially the same.  We denote this intersection by $T_{\tith, w} \subset B(y, \rho)$.  It is a tube of radius $R^{1/2 + \dt}$ and length $\rho$.  The set $\TTT_{\tith, w}$ can be described geometrically as the set of pairs $(\tv)$ so that $\Ttv \cap B(y, \rho)$ is essentially $ T_{\tith, w}$, and so that the direction of $\Ttv$ obeys the inequality $\Angle (G(\theta), G(\tilde \theta) ) \lesssim \rho^{-1/2}$.  

We will need to study the following situation.  We have a function $g$ which is concentrated on wave packets in $\TTT_Z$, and we want to study $\EP g$ on a smaller ball $B(y, \rho) \subset B_R$.  If we decompose $g$ into wave packets associated to the ball $B(y, \rho)$, what can we say about the new wave packet decomposition?

First of all, we point out the wave decompositon of $\tg$ at scale $\rho$ is not necessarily concentrated on wave packets that are tangent to $Z$ on $B(y, \rho)$.  By Lemma \ref{scalesftvtith}, we do know that $\tg$ is concentrated on wave packets in $\cup_{(\tv) \in \TTT_{Z}} \tilde \TTT_{\tv}$.  
If $(\tv) \in \TTT_Z$, then we know that $\Ttv$ is tangent to $Z$ on $B_R$, which implies that

\begin{equation*} \Ttv \cap B(y, \rho) \subset N_{R^{1/2 + \dm}}(Z) \cap B(y, \rho), \end{equation*}

and that for any $x \in \Ttv$ and $z \in Z \cap B(y, \rho)$ with $|x-z| \lesssim R^{1/2 + \dm}$, 

\begin{equation*} \Angle (G(\theta), T_z Z ) \lesssim R^{-1/2 + \dm}. \end{equation*}

If now $(\ttv) \in \tilde \TTT_{\tv}$, then (\ref{tubetildetubedist}) and (\ref{tubetildetubeangle}) imply that

\begin{equation*} \tTtv  \subset N_{R^{1/2 + \dm}}(Z) \cap B(y, \rho), \end{equation*}

and that for any $x \in \tTtv$ and $z \in Z \cap B(y, \rho)$ with $|x-z| \lesssim R^{1/2 + \dm}$, 

\begin{equation*} \Angle (G(\theta), T_z Z ) \lesssim R^{-1/2 + \dm} + \rho^{-1/2} \le \rho^{-1/2 + \dm}. \end{equation*}

The angle condition is more than strong enough for $\tTtv$ to be tangent to $Z$ in $B(y, \rho)$, but it is not true that $\tTtv \subset N_{\rho^{1/2 + \dm}} (Z) \cap B(y, \rho)$.  If $\tTtv$ intersects $N_{\rho^{1/2 + \dm}}(Z) \cap B(y, \rho)$, then the angle condition guarantees that $\tTtv$ is contained in $N_{2 \rho^{1/2 + \dm}} (Z) \cap B(y, \rho)$.  A bit more generally, if $b$ is a vector with $|b| \le R^{1/2 + \dm}$, and if $\tTtv$ intersects $N_{\rho^{1/2 + \dm}}(Z + b) \cap B(y,\rho)$, then $\tTtv$ is contained in $N_{2 \rho^{1/2 + \dm}} (Z+b) \cap B(y, \rho)$, and $\tTtv$ is tangent to $Z+b$ in $B_j$.

For any $b \in B_{R^{1/2 + \dm}}$, we define

$$ \tilde \TTT_{Z + b} := \{ (\ttv) : \tTtv \textrm{ is tangent to } Z+b \textrm{ in } B_j \}. $$

$$ \tg_{b} := \sum_{(\ttv) \in \tilde \TTT_{Z + b}} \tg_{\ttv}. $$

So we see that if $g$ is concentrated on wave packets in $\TTT_Z$, then $\tg$ is concentrated on wave packets in $\cup_{|b| \lesssim R^{1/2 + \dm}} \tilde \TTT_{Z+b}$.  For any $(\ttv) \in \cup_{(\tv) \in \TTT_Z} \tilde \TTT_{\tv}$, we saw above that either $(\ttv) \in \tilde \TTT_{Z+b}$, or else $\tTtv$ is disjoint from $N_{\rho^{1/2 + \dm}}(Z) \cap B(y, \rho)$.
Therefore, for $x = y + \tx \in B(y, \rho)$, 

\begin{equation} \label{Egb} | \EP \tg_b( \tx) | \sim  \chi_{N_{\rho^{1/2 + \dm}}(Z+b)}(x) \EP g(x). \end{equation}

To get finer information, it is helpful to decompose $g$ as above as $g = \sum_{\tith, w} g_{\tith, w}$, and to think about the wave packet decomposition of each piece $(g_{\tith, w})^\sim$ on $B(y, \rho)$.  For brevity, we let $h = g_{\tith, w}$.  We choose a ball $B(x_0, R^{1/2 + \dm})$ with $x_0 \in T_{\tith, w} \subset B(y, \rho)$.  For any $(\tv) \in \TTT_{\tith, w}$, $\Ttv \cap B(y, \rho) \subset T_{\tith, w}$, and so $\Ttv$ intersects $B(x_0, R^{1/2 + \dm})$ in a tube segment of length $R^{1/2 + \dm}$.  

By Lemma \ref{orthogloc2}, we have

\begin{equation} \label{hL2} \| h \|_{L^2}^2 \sim R^{-1/2 - \dm} \| \EP h \|_{L^2(B(x_0, R^{1/2 + \dm}))}^2. \end{equation}

Now we know that $ \EP \tih (\tx) = \EP h(\tx + y)$.  Also, for $x = y + \tx \in B(y, \rho)$, we know by (\ref{Egb}) that

$$ | \EP\tih_b (\tx) | \sim \chi_{N_{\rho^{1/2 + \dm}}(Z+b)}(x) \EP h(x). $$

Using Lemma \ref{orthogloc2} again, we have

\begin{equation} \label{hbL2} \| \tih_b \|_{L^2}^2 \sim R^{-1/2 - \dm} \| \EP h \|_{L^2(B(x_0, R^{1/2 + \dm}) \cap N_{\rho^{1/2 + \dm}}(Z+b))}^2. \end{equation}

These observations lead to a couple estimates about how $\| \tih_b \|_{L^2}$ relates to $\| h \|_{L^2}$.

\begin{lemma} \label{disjborthog} Suppose that $h$ is concentrated on wave packets in $\TTT_{\tith, w}$ for some $(\tith, w)$, and $x_0$ is in the tube $T_{\tith, w}$.  If we choose a set of vectors $b \in B_{R^{1/2 + \dm}}$ so that the sets 

$$B(x_0, R^{1/2 + \dm}) \cap N_{\rho^{1/2 + \dm}}(Z+b)$$

\noindent are disjoint, then

$$ \sum_b \| \tih_b \|_{L^2}^2 \lesssim \| h \|_{L^2}^2. $$

\end{lemma}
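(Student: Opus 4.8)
The plan is to deduce the lemma directly from the two identities (\ref{hL2}) and (\ref{hbL2}) established just above, together with the disjointness hypothesis, so there is very little left to do. Write $B = B(x_0, R^{1/2 + \dm})$ for the ball appearing in those identities. Since $h$ is concentrated on wave packets in $\TTT_{\tith, w}$ and $x_0 \in T_{\tith, w}$, every such wave packet meets $B$ in a tube segment of length $R^{1/2 + \dm}$, which is exactly the hypothesis needed to apply Lemma \ref{orthogloc2} uniformly to $h$ and to each of the pieces $\tih_b$; this is how (\ref{hL2}) and (\ref{hbL2}) were obtained, and it is the one place the assumption $x_0 \in T_{\tith, w}$ is used.

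Next I would simply sum (\ref{hbL2}) over the chosen vectors $b$. By hypothesis the sets $B \cap N_{\rho^{1/2 + \dm}}(Z + b)$ are pairwise disjoint, and they are all contained in $B$, so
\[ \sum_b \| \EP h \|_{L^2(B \cap N_{\rho^{1/2 + \dm}}(Z+b))}^2 \le \| \EP h \|_{L^2(B)}^2. \]
Multiplying through by $R^{-1/2 - \dm}$ and invoking (\ref{hbL2}) on the left and (\ref{hL2}) on the right gives
\[ \sum_b \| \tih_b \|_{L^2}^2 \lesssim R^{-1/2 - \dm} \| \EP h \|_{L^2(B)}^2 \sim \| h \|_{L^2}^2, \]
which is the asserted bound.

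There is essentially no analytic obstacle here: the real content has been front-loaded into the transverse equidistribution machinery that produced (\ref{hL2}) and (\ref{hbL2}), and the present statement is just the observation that disjoint pieces of a fixed $L^2$ integral cannot sum to more than the whole. The only point requiring a small amount of care is the usual bookkeeping of the rapidly decaying errors: when we sum over $b$, the number of relevant $b$ is at most polynomial in $R$, so the accumulated $\RD(R) \| h \|_{L^2}^2$ errors remain of the form $\RD(R) \| h \|_{L^2}^2$ and are absorbed into the final estimate without affecting the conclusion.
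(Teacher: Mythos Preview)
Your proof is correct and is exactly the argument the paper has in mind: the lemma is stated immediately after (\ref{hL2}) and (\ref{hbL2}) as a direct consequence of those identities together with the disjointness hypothesis, and you have filled in precisely those details. One small terminological quibble: (\ref{hL2}) and (\ref{hbL2}) come from the local $L^2$ orthogonality (Lemma~\ref{orthogloc2}) and the identity (\ref{Egb}), not from transverse equidistribution, which enters only in the subsequent Lemma~\ref{equidgtithw}.
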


Finally, we come to transverse equidistribution estimates.  Combining the transverse equidistribution estimate in Lemma \ref{equid2} with the considerations in this section, we get the following estimates.

\begin{lemma} \label{equidgtithw}  Suppose that $h$ is concentrated on wave packets in $\TTT_Z$ and also on wave packets in $\TTT_{\tith, w}$ for some $(\tith, w)$.  Then for any $b \in B_{R^{1/2 + \dm}}$, we have

$$ \| \tilde h_b \|_{L^2}^2 \le R^{O(\dm)} \left( \frac{R^{1/2}}{\rho^{1/2}} \right)^{-(n-m)} \| h \|_{L^2}^2. $$

\end{lemma}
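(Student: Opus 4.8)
The plan is to deduce this from the transverse equidistribution estimate Lemma \ref{equid2}, applied not to $Z$ but to the translate $Z+b$ on the small ball $B(x_0, R^{1/2+\dm})$, where $x_0$ is the point in the tube $T_{\tith, w}$ used just above (\ref{hL2}).

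First I would recall the two $L^2$-localization identities already established. Since $h$ is concentrated on wave packets in $\TTT_{\tith, w}$, each of which meets $B(x_0, R^{1/2+\dm})$ in a tube segment, equations (\ref{hL2}) and (\ref{hbL2}) give
$$ \| h \|_{L^2}^2 \sim R^{-1/2-\dm} \| \EP h \|_{L^2(B(x_0, R^{1/2+\dm}))}^2, \qquad \| \tih_b \|_{L^2}^2 \sim R^{-1/2-\dm} \| \EP h \|_{L^2(B(x_0, R^{1/2+\dm}) \cap N_{\rho^{1/2+\dm}}(Z+b))}^2 . $$
Also, by Lemma \ref{orthogloc2} with $z = x_0$ and $r = R^{1/2+\dm}$, the mass of $\EP h$ on $2B := B(x_0, 2 R^{1/2+\dm})$ obeys $\int_{2B} | \EP h |^2 \lesssim R^{1/2+\dm} \| h \|_{L^2}^2$.

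Next I would check that $h$ is concentrated on wave packets in $\TTT_{B(x_0, R^{1/2+\dm}),\, Z+b}$, i.e.\ on wave packets tangent to $Z+b$ in $B_R$ which meet $B(x_0, R^{1/2+\dm})$. By hypothesis $h$ is concentrated on wave packets in $\TTT_Z \cap \TTT_{\tith, w}$; for such a pair $(\tv)$ the tube $\Ttv$ meets $B(x_0, R^{1/2+\dm})$ because $\Ttv \cap B(y,\rho)$ is essentially $T_{\tith, w} \ni x_0$ by Lemma \ref{tubetildetube}. Moreover, since $|b| \le R^{1/2+\dm}$ and $\Ttv$ has radius $\lesssim R^{1/2+\dm}$, the distance condition $\Ttv \subset \NZR$ passes to $\Ttv \subset N_{C R^{1/2+\dm}}(Z+b) \cap B_R$, and the angle condition transfers up to a constant since $T_{z'}(Z+b) = T_{z'-b}Z$ and $z'-b \in Z$ lies within $\lesssim R^{1/2+\dm}$ of $x$ whenever $z' \in Z+b$ does. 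This is all Lemma \ref{equid2} uses, as its proof invokes the distance and angle conditions only up to constants.

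Finally I would apply Lemma \ref{equid2} with the $m$-dimensional transverse complete intersection $Z+b$, the ball $B = B(x_0, R^{1/2+\dm})$, and $g = h$, obtaining
$$ \int_{B(x_0, R^{1/2+\dm}) \cap N_{\rho^{1/2+\dm}}(Z+b)} | \EP h |^2 \lesssim R^{O(\dm)} \left( \frac{R^{1/2}}{\rho^{1/2}} \right)^{-(n-m)} \int_{2B} | \EP h |^2 + \RD(R) \| h \|_{L^2}^2 . $$
Substituting $\int_{2B} | \EP h |^2 \lesssim R^{1/2+\dm} \| h \|_{L^2}^2$, multiplying by $R^{-1/2-\dm}$, and using the identity for $\| \tih_b \|_{L^2}^2$ above yields $\| \tih_b \|_{L^2}^2 \lesssim R^{O(\dm)} ( R^{1/2}/\rho^{1/2} )^{-(n-m)} \| h \|_{L^2}^2$, the $\RD(R)$ term being absorbed into the main term. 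Since $h = g_{\tith, w}$ and the hypotheses are exactly that $h$ is concentrated on $\TTT_Z$ and on $\TTT_{\tith, w}$, this is the claim. The only point needing care is the tangency-transfer step: this is where $|b| \le R^{1/2+\dm}$ enters, and it is the scale-$R$ counterpart of the observation already made in the discussion preceding the lemma; everything else is bookkeeping with the orthogonality identities and absorption of rapidly decaying errors.
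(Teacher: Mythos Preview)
Your proposal is correct and follows essentially the same approach as the paper: combine (\ref{hbL2}), Lemma \ref{equid2} applied to the translate $Z+b$, and (\ref{hL2}) (or equivalently Lemma \ref{orthogloc2}). Your argument is in fact more careful than the paper's, which simply writes ``We combine (\ref{hbL2}), Lemma \ref{equid2}, and (\ref{hL2})'' without spelling out why the hypotheses of Lemma \ref{equid2} transfer from $Z$ to $Z+b$; your tangency-transfer step fills that gap correctly.
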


\begin{proof} We combine (\ref{hbL2}), Lemma \ref{equid2}, and (\ref{hL2}):

$$  \| \tilde h_b \|_{L^2}^2 \sim R^{-1/2 - \dm} \| \EP h \|_{L^2(B(x_0, R^{1/2 + \dm}) \cap N_{\rho^{1/2 + \dm}}(Z+b))}^2 $$

$$\lesssim R^{-1/2 - \dm} R^{O(\dm)}  \left( \frac{R^{1/2}}{\rho^{1/2}} \right)^{-(n-m)} \| \EP h \|_{L^2(B(x_0, R^{1/2 + \dm}))}^2 $$

$$ \sim R^{O(\dm)}  \left( \frac{R^{1/2}}{\rho^{1/2}} \right)^{-(n-m)} \| h \|_{L^2}^2. $$

\end{proof}

We can now combine the different $(\tith, w)$ in order to get estimates for $g$.

\begin{lemma} \label{equidsmallerball}
 If $g$ is concentrated in wave packets in $\TTT_Z$, then for any $b$,
$$ \| \tilde g_b \|_{L^2}^2 \le R^{O(\dm)} \left( \frac{R^{1/2}}{\rho^{1/2}} \right)^{-(n-m)} \| g \|_{L^2}^2. $$
\end{lemma}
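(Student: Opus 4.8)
The plan is to reduce the estimate to the single--equivalence--class version already established in Lemma \ref{equidgtithw}, using the essentially orthogonal decomposition $g = \sum_{\tith, w} g_{\tith, w}$ from Section \ref{secsmallerball}. First I would note that each piece $g_{\tith,w}$ still satisfies the hypotheses of that lemma: since the sets $\TTT_{\tith,w}$ are essentially pairwise disjoint and their union contains every pair $(\tv)$, the assumption that $g$ is concentrated on wave packets in $\TTT_Z$ forces $g_{\tith,w} = \sum_{(\tv) \in \TTT_Z \cap \TTT_{\tith,w}} \gtv + \RD(R)\|g\|_{L^2}$, so $g_{\tith,w}$ is concentrated on wave packets in $\TTT_Z$, and, being $\sum_{(\tv)\in\TTT_{\tith,w}}\gtv$ by definition, it is trivially also concentrated on wave packets in $\TTT_{\tith,w}$. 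Hence Lemma \ref{equidgtithw} applies to $h = g_{\tith,w}$ and gives, for every $b \in B_{R^{1/2+\dm}}$,
$$ \left\| ( \widetilde{g_{\tith,w}} )_b \right\|_{L^2}^2 \le R^{O(\dm)} \left( \frac{R^{1/2}}{\rho^{1/2}} \right)^{-(n-m)} \| g_{\tith,w} \|_{L^2}^2. $$

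Next I would assemble these per--class bounds into the global one. Multiplying $g = \sum_{\tith,w} g_{\tith,w} + \RD(R)\|g\|_{L^2}$ by $e^{i\psi_y}$ and then passing to the component on wave packets in $\tilde\TTT_{Z+b}$ — a linear operation, exact up to rapidly decaying errors — yields $\tilde g_b = \sum_{\tith,w} ( \widetilde{g_{\tith,w}} )_b + \RD(R)\|g\|_{L^2}$. By Lemma \ref{scalestithv}, $\widetilde{g_{\tith,w}}$ is concentrated on wave packets in $\tilde\TTT_{\tith,w}$, hence so is its component $( \widetilde{g_{\tith,w}} )_b$; since the $\tilde\TTT_{\tith,w}$ are essentially pairwise disjoint, the functions $( \widetilde{g_{\tith,w}} )_b$ are essentially orthogonal in $L^2$, so by the orthogonality relation (\ref{orthog}) for the scale-$\rho$ wave packet decomposition (cf. also (\ref{orthtigtithv})), $\| \tilde g_b \|_{L^2}^2 \sim \sum_{\tith,w} \| ( \widetilde{g_{\tith,w}} )_b \|_{L^2}^2$. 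Combining this with the per--class bound above and then with the orthogonality (\ref{orthgtithv}) of the decomposition $g = \sum_{\tith,w} g_{\tith,w}$, I obtain
$$ \| \tilde g_b \|_{L^2}^2 \;\lesssim\; R^{O(\dm)} \left( \frac{R^{1/2}}{\rho^{1/2}} \right)^{-(n-m)} \sum_{\tith,w} \| g_{\tith,w} \|_{L^2}^2 \;\sim\; R^{O(\dm)} \left( \frac{R^{1/2}}{\rho^{1/2}} \right)^{-(n-m)} \| g \|_{L^2}^2, $$
which is the claimed estimate.

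This argument is essentially bookkeeping, and the only place where care is needed — rather than a genuine obstacle — is the repeated use of ``essentially disjoint'' and ``essentially orthogonal'': one must check that the scale-$\rho$ wave packets comprising the various $( \widetilde{g_{\tith,w}} )_b$ really do have almost disjoint index sets (so that (\ref{orthog}) can be applied with summable errors), and that forming the $\tilde\TTT_{Z+b}$--component commutes with the decomposition $\tilde g = \sum_{\tith,w} \widetilde{g_{\tith,w}}$ up to $\RD(R)$ errors. Both facts follow directly from Section \ref{secsmallerball}: the classes $\TTT_{\tith,w}$, and hence the $\tilde\TTT_{\tith,w}$, partition all pairs into essentially disjoint families, and Lemma \ref{scalesftvtith} pins down which scale-$\rho$ wave packets each $\widetilde{\gtv}$ can contribute to.
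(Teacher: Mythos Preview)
Your proposal is correct and follows essentially the same route as the paper: decompose $g = \sum_{\tith,w} g_{\tith,w}$, apply Lemma \ref{equidgtithw} to each piece, use linearity of $f \mapsto \tilde f_b$ together with Lemma \ref{scalestithv} to see that the $(\widetilde{g_{\tith,w}})_b$ are essentially orthogonal, and then sum using (\ref{orthgtithv}) and (\ref{orthtigtithv}). The paper's own proof is the same argument, stated slightly more tersely.
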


\begin{proof}

We first expand $g = \sum_{\tith, w} g_{\tith, w}$.  The wave packets contributing significantly to each $g_{\tith, w}$ are a subset of those contributing to $g$, and so each $g_{\tith, w}$ is concentrated on wave packets in $\TTT_Z$.  For each $\tith, w$, Lemma \ref{equidgtithw} tells us that

$$ \| (g_{\tith, w})^\sim_b \|_{L^2}^2 \le R^{O(\dm)} \left( \frac{R^{1/2}}{\rho^{1/2}} \right)^{-(n-m)} \| g_{\tith,w} \|_{L^2}^2. $$

We know that the $g_{\tith, w}$ are orthogonal, and so

$$ \| g \|_{L^2}^2 \sim \sum_{\tith, w} \| g_{\tith, w} \|_{L^2}^2. $$

The operation $f \mapsto \tf_b$ is a linear map, and so

$$ \tg_b = \sum_{\tith, w} (g_{\tith, w})^\sim_b. $$

We claim that this is also an orthogonal decomposition.  By Lemma \ref{scalestithv}, $(g_{\tith, w})^\sim$ is concentrated on wave packets in $\tilde \TTT_{\tith, w}$.  But then $(g_{\tith, w})^\sim_b$ is also concentrated on wave packets in $\tilde \TTT_{\tith, w}$.  The different sets $\tilde \TTT_{\tith,w}$ are disjoint, and so the functions $(g_{\tith, w})^\sim_b$ are orthogonal, as claimed.  Therefore

$$ \| \tg_b \|_{L^2}^2 \sim \sum_{\tith, w} \| (g_{\tith, w})^\sim_b \|_{L^2}^2. $$

Combining these estimates gives the desired conclusion.  \end{proof}

\section{Proof of Theorem \ref{broadEP}.    }  \label{secmainind}

We now formulate the inductive estimate that proves Theorem \ref{broadEP}.

\begin{prop} \label{mainind} For $\eps > 0$, there are small constants $ 0 < \delta \ll \delta_{n-1} \ll ... \ll \delta_1 \ll \delta_0 \ll \eps,$ and a large constant $\bar A$ so that the following holds.

Let $m$ be a dimension in the range $1 \le m \le n$.  Suppose that $Z = Z(P_1, ..., P_{n-m})$ is a transverse complete intersection where $\Deg P_i \le D_Z$.  Suppose that $f$ is concentrated on wave packets from $\TTT_Z$.  Then for any $1 \le A \le \bar A$, and any radius $1 \le R$, 

\begin{equation} \label{mainindest} \| \EP f \|_{\BL^p_{k, A} (B_R)} \le C(K, \eps, m, D_Z) R^{m \eps} R^{\delta (\log \bar A - \log A)} R^{ - e + \frac{1}{2}}  \| f \|_{L^2}, \end{equation}

for all 

\begin{equation} \label{rangep} 2 \le p \le \pkm := 2 \cdot \frac{m+k}{m+k-2}, \end{equation}

where

\begin{equation} \label{defe} e = e(k,n,p) = \frac{1}{2} \left( \frac{1}{2} - \frac{1}{p} \right) (n+k). \end{equation}

\end{prop}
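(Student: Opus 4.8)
The proof is by induction, primarily on the dimension $m$ (running upward from the base case $m=k-1$), secondarily on the radius $R$, and, for fixed $(m,R)$, on the number of wave packets appearing in the decomposition of $f$. A preliminary reduction: by the broad H\"older inequality Lemma~\ref{broadholder} (applied with exponents $2$ and $\pkm$ and with $A_1=0$, $A_2=A$) together with Lemma~\ref{basicL2}, which gives $\|\EP f\|_{\BL^2_{k,0}(B_R)}\lesssim R^{1/2}\|f\|_{L^2}$ and matches (\ref{mainindest}) at $p=2$ where $e=0$, it suffices to prove (\ref{mainindest}) at the single endpoint $p=\pkm$; the remaining exponents follow by interpolation, since $e(k,n,p)$ in (\ref{defe}) is affine in $1/p$. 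For the base case $m=k-1$: on any ball $B_{K^2}\subset N_{R^{1/2}}(Z)$ the angle condition of Definition~\ref{defTZ} forces every wave packet $\Ttv\in\TTT_Z$ meeting $B_{K^2}$ to have $G(\theta)$ within $O(R^{-1/2+\dm})$ of the fixed $(k-1)$-plane $T_zZ$; choosing that plane as one of the subspaces $V_a$ makes $\mu_{\EP f}(B_{K^2})$ negligible, so $\|\EP f\|_{\BLpkA(B_R)}=\RD(R)\|f\|_{L^2}$ and (\ref{mainindest}) holds trivially.

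For the inductive step I would run polynomial partitioning. After choosing new orthogonal coordinates in which the projection of a positive-fraction subset of $Z$ onto the $(y_1,\dots,y_m)$-plane is non-degenerate, apply Theorem~\ref{polypart} to $\mu_{\EP f}$ with a polynomial $P$ depending only on $y_1,\dots,y_m$; then $Z(P)$ is a finite union of transverse complete intersections meeting $Z$ transversely on that subset, $\RR^n\setminus Z(P)$ splits into $\sim D^m$ equal-$\mu_{\EP f}$-mass cells $O_i$, and — since a generic line meets $Z(P)$ at most $D$ times — each tube $\Ttv$ penetrates at most $D+1$ of the truncated cells $O_i'=O_i\setminus N_{R^{1/2}}(Z(P))$. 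Split into the cellular case ($\sum_i\mu_{\EP f}(O_i')$ dominates) and the algebraic case ($\mu_{\EP f}(N_{R^{1/2+\dm}}(Y)\cap B_R)$ dominates, $Y=Z\cap Z(P)$ being an $(m-1)$-dimensional transverse complete intersection of degree $\lesssim D\,D_Z$). In the cellular case, with $f_i=\sum_{\Ttv\cap O_i'\ne\emptyset}\ftv$, most cells satisfy both $\|\EP f\|_{\BLpkA(B_R)}^p\lesssim D^m\mu_{\EP f}(O_i')$ and, by the tube count and orthogonality (\ref{orthog}), $\|f_i\|_{L^2}^2\lesssim D^{1-m}\|f\|_{L^2}^2$; applying the wave-packet-count inductive hypothesis to such an $f_i$ produces a factor $D^{\,m+(1-m)p/2}$, whose exponent is $\le 0$ for $p\ge\pkm$ (since $\pkm\ge \tfrac{2m}{m-1}$ because $m\ge k$), and choosing $D=D(\eps)$ large closes this case.

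The algebraic case is the heart of the argument. First split $f=f_{tang}+f_{trans}$, where $f_{tang}$ collects the wave packets tangent to $Y$ and $f_{trans}$ the rest, and apply the quasi-triangle inequality Lemma~\ref{triangleineq}: $\|\EP f\|_{\BLpkA(B_R)}\lesssim \|\EP f_{tang}\|_{\BLpkAt(B_R)}+\|\EP f_{trans}\|_{\BLpkAt(B_R)}$ — this is the one step spending $A$. The tangential term is controlled by the dimension induction: $f_{tang}$ is concentrated on wave packets from $\TTT_Y$ with $\dim Y=m-1$, the inductive estimate at dimension $m-1$ applies (valid since $p=\pkm<\bar p(k,m-1)$, and $e$ does not depend on $m$), and the gain $R^{(m-1)\eps}$ versus $R^{m\eps}$ absorbs both the $R^{\dt\log 2}$ cost of halving $A$ and the larger degree. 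For $f_{trans}$, cover $B_R$ by balls $B_j$ of radius $\rho=R^{1-c}$ with $c$ governed by the $\delta$-hierarchy, let $f_{j,trans}$ collect the wave packets crossing $N_{R^{1/2+\dm}}(Y)$ transversely inside $B_j$; Lemma~\ref{transinterbound} shows each tube is transverse to $N(Y)$ in $\lesssim 1$ of the $B_j$, so $\sum_j\|f_{j,trans}\|_{L^2}^2\lesssim\|f\|_{L^2}^2$. Re-expanding $f_{j,trans}$ into scale-$\rho$ wave packets via Section~\ref{secsmallerball}, write $f_{j,trans}=\sum_b f_{j,trans,b}$ where each scale-$\rho$ expansion is concentrated on wave packets tangent to the translate $Z+b$ in $B_j$; the radius induction (Proposition~\ref{mainind} at radius $\rho$, dimension $m$, parameter $A/2$) bounds each $\mu_{\EP f_{j,trans,b}}(B_j)$, the $b$-pieces are orthogonal and supported in disjoint slabs, and the transverse equidistribution estimate Lemma~\ref{equidsmallerball} gives $\|f_{j,trans,b}\|_{L^2}^2\lesssim R^{O(\dm)}(R^{1/2}/\rho^{1/2})^{-(n-m)}\|f\|_{L^2}^2$. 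Combining these with a H\"older step in the $L^2$ norms (using $p>2$), the power $(R/\rho)^{e-1/2}$ coming from $M(\rho,A/2)$ and the power $(R^{1/2}/\rho^{1/2})^{-(n-m)(p-2)/(2p)}$ from equidistribution cancel exactly when $p=\pkm$ — this is precisely the identity that fixes the exponent $e$ in (\ref{defe}) — while the residual $R^{O(\dm)}$ and $R^{\dt\log 2}$ losses are swallowed by the genuine gain $(\rho/R)^{m\eps}$, closing the induction.

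The main obstacle is this transverse algebraic case: making the exponent arithmetic balance exactly at $p=\pkm$ while respecting the hierarchy $\delta\ll\delta_{n-1}\ll\cdots\ll\delta_1\ll\delta_0\ll\eps$ and the accounting of $A$. Concretely, one must choose $\rho$, the tangency thresholds $R^{-1/2+\dm}$, the number of translates $b$ ($\sim(R^{1/2}/\rho^{1/2})^{n-m}$), and finally $\bar A$ so that (i) each invocation of the quasi-triangle inequality is paid for by an honest $\eps$-gain, (ii) the quasi-triangle inequality is used only $O_\eps(1)$ times along any branch of the recursion, which is what the factor $R^{\dt(\log\bar A-\log A)}$ in (\ref{mainindest}) bookkeeps, and (iii) the transverse-equidistribution loss $R^{O(\dm)}$ never dominates. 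A secondary difficulty, absent from the hyperplane caricature, is that $Z$ is genuinely curved, so cutting $N_{R^{1/2}}(Z)$ into balanced cells requires the coordinate choice together with the control of the Gauss map of $Z$ (Lemma~\ref{sardZw} and the tangent-plane discussion of Section~5) and the transverse tube–variety bound Lemma~\ref{transinterbound}, which is exactly what keeps $Y=Z\cap Z(P)$ a low-degree variety one dimension lower and makes the whole recursion on $m$ terminate at $m=k-1$.
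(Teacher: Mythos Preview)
Your outline follows the paper's architecture closely: interpolation to the endpoint via Lemma~\ref{broadholder}, the base case $m=k-1$, polynomial partitioning adapted to $Z$ via projection onto an $m$-plane, the cellular/algebraic dichotomy, and in the algebraic case the tangential/transverse split resolved by dimension and radius induction respectively, with Lemma~\ref{equidsmallerball} supplying exactly the factor that makes the arithmetic close at $p=\pkm$. Two points of organization differ from the paper and deserve care.

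First, the paper does not induct on wave-packet count in the cellular case. Instead it notes that the inductive hypothesis at radius $R/2$ already yields the estimate at radius $R$ for \emph{any} admissible $g$ with a worse constant $CE$; applying this to $f_i$ and multiplying by the cellular gain $D^{m+(1-m)p/2}$ closes the induction provided that exponent is strictly negative. Here you should watch the endpoint $m=k$: then $\pkm=2m/(m-1)$ exactly and the exponent is zero, so the paper works at $p=\pkm+\delta$ in that case to force a genuine (tiny) gain --- your ``$\le 0$'' is not enough.

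Second, and more substantively, the tangential/transverse split is \emph{local}, not global. The paper first covers $B_R$ by balls $B_j$ of radius $\rho$ (chosen via $\rho^{1/2+\dl}=R^{1/2+\dm}$), defines tangency to $Y$ relative to each $B_j$, and splits $f_j=f_{j,tang}+f_{j,trans}$ there; the quasi-triangle inequality is invoked ball-by-ball, and the dimension induction is applied to $f_{j,tang}$ at radius $\rho$ (on the variety $Y$), then summed brutally over $j$ at cost $R^{O(\dl)}$. Your global split ``$f=f_{tang}+f_{trans}$'' is not well-posed as written, since whether a given tube is tangent or transverse to $Y$ depends on which $B_j$ one sits in; and applying the $(m-1)$-dimensional hypothesis to a global $f_{tang}$ at radius $R$ would require membership in $\TTT_Y$ at scale $R$ with threshold $R^{-1/2+\delta_{m-1}}$, which the hypothesis $f\in\TTT_Z$ does not directly give. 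Once you localize the split to the $B_j$'s (and run the tangential induction at radius $\rho$), your argument coincides with the paper's.
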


When $m=n$, Proposition \ref{mainind} gives Theorem \ref{broadEP}.   When $m= n$, we can take $Z = \RR^n$ (and $D_Z =1$).  Now if we choose $A=\bar A$ and $p = \pkn$, then we compute $-e + 1/2 = 0$, and we get the inequality in Theorem \ref{broadEP}.

We prove Proposition \ref{mainind} by induction.  We will do induction on the dimension $m$, the radius $R$, and on $A$.   We start by checking the base of the induction.  When $R$ is small, we choose the constant $C(K, \eps, m, D_Z)$ sufficiently large and the result follows.  So from now on, we can assume that $R$ is very large compared to $K, \eps, m, D_Z$.  To check the case $A =1$, we choose $\bar A$ large enough so that $R^{\delta( \log \bar A - \log 1 )} = R^{10 n}$, and the inequality follows because $ \| \EP f \|_{\BL^p_{k, 1} (B_R)} \le | B_R | \| f \|_{L^2}$.  The base of the induction on $m$ is $m = k-1$.  In this case, since $A \ge 1$, we have

\begin{equation} \label{m=k-1} \| \EP f \|_{\BLpkA (B_R)} \le \RD(R) \| f \|_{L^2}. \end{equation}

\noindent This follows from the definition of $\BLpkA$.  Recall that $ \| \EP f \|_{\BLpkA(B_R)}^p := \sum_{B_{K^2} \subset B_R} \mu_{Ef}(B_{K^2})$, where 

$$ \mu_{\EP f} (B_{K^2}) := \min_{V_1, ..., V_A \textrm{ $(k-1)$-subspaces of } \RR^n} \left( \max_{\tau: \Angle(G(\tau), V_a) > K^{-1} \textrm{ for all } a } \int_{B_{K^2}} | \EP f_\tau|^p \right). $$

\noindent  Fix a ball $B = B_{K^2} \subset \NZR$, and let $V$ be the tangent space to $Z$ at some point $z$ in the $R^{1/2 + \dm}$-neighborhood of the ball $B_{K^2}$.   Notice that the dimension of $V$ is $m = k-1$.  If $\Ttv$ intersects $B_{K^2}$ and if $(\tv) \in \TTT_Z$, then $\Angle (G(\theta), V) \lesssim R^{-1/2 + \dm}$.  So if $\tau$ contains a $\theta$ with $(\tv) \in \TTT_Z$ for some $v$, then $\Angle( G(\tau), V) \le K^{-1}$.  On the other hand, if $\tau$ does not contain such a $\theta$, then $\| f_\tau \|_{L^2} \le \RD(R) \| f \|_{L^2}$, because $f$ is concentrated on wave packets tangent to $Z$.  Therefore,

$$ \left( \max_{\tau: \Angle(G(\tau), V) > K^{-1} } \int_{B_{K^2}} | \EP f_\tau|^p \right) \le \RD(R) \| f \|_{L^2}^p. $$

\noindent On the other hand, if $B_{K^2} \subset B_R$ is not contained in $N_{R^{1/2 + \dm}}(Z)$, then on $B_{K^2}$, $| \EP f | \le \RD(R) \| f \|_{L^2}$.  This proves (\ref{m=k-1}), establishing the base case $m=k-1$.

For $p=2$, Proposition \ref{mainind} follows quickly from the basic $L^2$ estimate in Lemma \ref{basicL2}:

$$ \| \EP f \|_{\BL^2_{k, A}(B_R)}^2 \le \sum_\tau \| \EP f_\tau \|_{L^2(B_R)}^2 $$

$$ \lesssim R \sum_\tau \| f_\tau \|_{L^2}^2 = R \| f \|_{L^2}^2. $$

Next we begin the inductive step.  We assume that Proposition \ref{mainind} holds if we decrease the dimension $m$, the radius $R$, or the value of $A$.  We define $p$ to be 

$$ p = \bar p(k,m) \textrm{ if } m > k, $$

$$ p = \bar p(m, m) + \delta \textrm{ if } m =k. $$

We will check the main estimate (\ref{mainindest}) for this value of $p$.  Once we check (\ref{mainindest}) for this value of $p$, we can get the whole range in (\ref{rangep}) by interpolation between 2 and our value of $p$, using the Holder inequality for $\BL$ in Lemma \ref{broadholder}.

There are two cases, depending on whether or not the mass of $\mu$ is concentrated into a small neighborhood of a lower dimensional variety.  We let $D(\eps, D_Z)$ be a function that we will define later.  We say we are in the algebraic case if there is a transverse complete intersection $Y^l \subset Z^m$ of dimension $l < m$, defined using polynomials of degree $\le D(\eps, D_Z)$, so that 

$$ \mu_{\EP f}( \NY \cap B_R) \gtrsim \mu_{\EP f}(B_R). $$

\noindent Otherwise, we say that we are in the non-algebraic case, or the cellular case.

\subsection{The non-algebraic case}  We begin with the non-algebraic case.  In this case, we will use polynomial partitioning.  To set up the polynomial partitioning, we first locate a significant piece of $\NZR$ where the tangent space of $Z$ is not changing too fast.  We say that a ball $B(x_0, R^{1/2 + \dm}) \subset \NZR$ is regular if, on each connected component of $Z \cap B(x_0, 10 R^{1/2 + \dm})$, the tangent space $TZ$ is constant up to angle $\frac{1}{100}$.  Let $w \in \Lambda^m \RR^n$.  Recall that $Z_w \subset Z$ is defined in (\ref{defZw}).  For generic $w$, $Z_w \subset Y$ is a transverse complete intersection of dimension $m-1$, defined using polynomials of degree $\lesssim D_Z$.  We can choose a set of $\lesssim 1$ values of $w$ so that on each connected component of $Z \setminus \cup_w Z_w$, the tangent plane $TZ$ is constant up to angle $\frac{1}{100}$.  Since we are in the non-algebraic case,

$$ \mu_{\EP f} ( \cup_w N_{10 R^{1/2 + \dm}}(Z_w) \cap B_R ) \le \frac{1}{100} \mu_{\EP f} (B_R). $$

\noindent Each ball $B(x_0, R^{1/2 + \dm}) \subset \NZR$ which does not intersect $\cup_w N_{10 R^{1/2 + \dm}}(Z_w)$ is a regular ball.  So the regular balls contain most of the mass of $\mu$.

For each regular ball $B = B_{R^{1/2 + \dm}} \subset \tNZR$, we pick a point $z \in Z \cap B_{R^{1/2 + \dm}}$ and we define $V_B$ to be the $m$-plane $T_z Z$.  For an $m$-plane $V$, we define $ \frak B_V$ to be the set of regular balls so that $\Angle(V_B, V) \le \frac{1}{100}$.  By pigeonholing, we can choose a plane $V$ so that

$$ \mu_{\EP f} \left( \cup_{B \in \frak B_V} B \right) \gtrsim \mu_{\EP f} (B_R). $$

\noindent We define $N_1 \subset \NZR$ to be the union of the balls $B \in \frak B_V$.  We let $\mu_1$ be the restriction of $\mu_{\EP f}$ to $N_1$, and we note that $\mu_1 (N_1) \sim \mu_{\EP f} (B_R)$.  

Now we are ready to do polynomial partitioning.  We let $P_V$ denote a polynomial defined on $V$: $P_V: V \rightarrow \RR$.  
We let $\pi: \RR^n \rightarrow V$ be the orthogonal projection.  Now we apply polynomial partitioning, Theorem \ref{polypart}, to the push-forward measure $\pi_* \mu_1$ on $V$, using the degree $D = D(\eps, D_Z)$.  Theorem \ref{polypart} gives us a non-zero polynomial $P_V$ of degree at most $D$ so that $V \setminus Z(P_V) = \cup_i O_{V, i}$, the number of cells $O_{V, i}$ is $\sim D^m$, and for each cell, $\pi_* \mu_1 (O_{V,i})$ is $\sim D^{-m} \pi_* \mu_1 (V)$.  Moreover, $P_V = \prod_j Q_{V,j}$ where each $Q_{V,j}$ has a little freedom in the constant term, which we can use for transversality purposes.  

We extend $V$ to a polynomial $P$ on $\RR^n$ by setting $P(x) := P_V( \pi (x) )$.  We note that $Z(P) = \pi^{-1} ( Z(P_V) )$.  We define $O_i := \pi^{-1} O_{V,i}$, and we note that $\RR^n \setminus Z(P) = \cup_i O_i$ and that $\mu_1 (O_i) = \pi_* \mu_1 ( O_{V,i}) \sim D^{-m} \mu_1( N_1)$.  Similarly, we define $Q_j(x) = Q_{V,j} (\pi(x) )$ so that $P = \prod_j Q_j$.  Each $Q_j$ is a polynomial of degree at most $D$ on $\RR^n$, and we have a little freedom in the constant term of each $Q_j$.  By Lemma \ref{sardtci}, we can guarantee that for each $j$, $Y_j = Z(P_1, ..., P_{n-m}, Q_j)$ is a transverse complete intersection.  

We define $W := N_{R^{1/2 + \dt}} Z(P)$, and $O_i' := O_i \setminus W$.  Since each tube $\Ttv$ has radius $R^{1/2 + \dt}$, each tube $\Ttv$ enters at most $D$ of the cells $O_i'$.  On the other hand, we claim that 

$$ W \cap N_1 \subset \bigcup_j N_{20 R^{1/2 + \dm}} (Y_j) . $$

\noindent Here is the proof of the claim.  Suppose that $x \in W \cap N_1$.  Since $x \in N_1$, $x$ is in a regular ball $B = B(x_0, R^{1/2 + \dm}) \in \frak B_V$.  There is a point $z_B \in Z \cap B$ with $\Angle(T_{z_B} Z, V) \le \frac{1}{100}$.  Let $Z_B$ be the component of $Z \cap 10 B$ containing this point $z_B$.  On $Z_B$, the tangent plane $TZ$ makes a small angle with $V$.  
Let us use coordinates $(v,w)$, where $v \in V$ and $w \in V^\perp$.  Since the tangent plane of $TZ_B$ makes a small angle with $V$, $Z_B$ is the graph of a function, $w = h(v)$, where $h$ has small Lipschitz constant $\le \frac{1}{100}$.  Since $x \in W$, there must be a point $(v_0, w_0) \in Z(P) \cap B(x, R^{1/2 + \dt})$.  Since $P(v,w) = P_V(v)$ we see that $P_V(v_0) = 0$.  Now the point $(v_0, h(v_0))$ lies both in $Z(P)$ and in $Z_B$, and so it lies in $Y_j$ for some $j$.  Since $(v_0, h(v_0)) \in Z_B \subset B(x_0, 10 R^{1/2 + \dm})$ and $x \in B(x_0, R^{1/2 + \dm})$, it follows that $x \in N_{20 R^{1/2 + \dm}}(Y_j)$ as desired.  

Since we are in the non-algebraic case, $\mu_1 (N_{20 R^{1/2 + \dm}} (Y_j) )$ is negligible for each $j$, and so $\mu_1 (W) = \mu_1 (W \cap N_1)$ is negligible.  Therefore, we have

\begin{equation} \label{equiO_i'} \mu_1( O_i') \sim D^{-m} \mu_1(N_1) \sim D^{-m} \| \EP f \|_{\BLpkA(B_R)}^p \textrm{ for the vast majority of } i. \end{equation}

Now for each index $i$ we define a function $f_i$ which only includes the wave packets that enter $O_i'$.  More precisely, $f_i = \sum_{(\tv) \in \TTT_i} \ftv$ where

\begin{equation} \label{defT_i} 
\TTT_i := \{ (\tv) \textrm{ such that } \Ttv \cap O_i' \not= \emptyset \}. 
\end{equation}

Each function $f_i$ is also concentrated on wave packets in $\TTT_Z$.  Moreover, there are $\sim D^m$ cells $O_i'$ for which

$$ \| \EP f \|_{\BLpkA(B_R)}^p \lesssim D^m \mu_{1} ( O_i' ) \lesssim D^m \mu_{\EP f_i} (O_i' ) \lesssim D^m \| \EP f_i \|_{\BLpkA(B_R)}^p. $$

On the other hand, we have good control on the $L^2$ norms of $f_i$.  Because each tube $\Ttv$ enters $\lesssim D$ cells $O_i'$, each pair $(\tv)$ belongs to $\TTT_i$ for $\lesssim D$ values of $i$, and so

$$ \sum_i \| f_i \|_{L^2}^2 \sim \sum_i \sum_{(\tv) \in \TTT_i} \| \ftv \|_{L^2}^2 \lesssim D \sum_{(\tv)} \| \ftv \|_{L^2}^2 \sim D \| f \|_{L^2}^2. $$

In summary, there are $\sim D^m$ choices of $i$ so that

\begin{equation} \label{Brf_i} 
 \| \EP f \|_{\BLpkA(B_R)}^p \lesssim D^m \| \EP f_i \|_{\BLpkA(B_R)}^p. 
 \end{equation}
 
\begin{equation} \label{L^2f_i}
 \| f_i \|_{L^2}^2  \lesssim D^{1-m} \| f \|_{L^2}^2.
\end{equation}

(For later reference, we also record here: for each $i$ and each $\theta$, 

\begin{equation} \label{L^2f_itheta}
\| f_{i, \theta} \|_{L^2}^2 \lesssim \sum_{\theta' \cap \theta \not= \emptyset} \| f_{\theta'} \|_{L^2}^2.
\end{equation}

\noindent This inequality doesn't appear in the proof of Theorem \ref{broadEP}, but it could be useful in some later refinements.)

Using these estimates, we can now prove (\ref{mainindest}) by induction on the radius.  To make the computation clearer, we abbreviate

\begin{equation*} E = \bar C(K,  \eps, m, D_Z) R^{m \eps} R^{\delta (\log \bar A - \log A)} R^{ - e + \frac{1}{2}}, \end{equation*}

so (\ref{mainindest}) reduces to

$$ \| \EP f \|_{\BLpkA(B_R)} \le E \| f \|_{L^2}. $$

Since we assume that (\ref{mainindest}) holds on balls of radius $R/2$, it follows that it also holds on balls of radius $R$ up to a constant factor loss.  So we can assume that

\begin{equation} \| \EP f_i \|_{\BL^p_{k, A} (B_R)} \le C E \| f_i \|_{L^2}. \end{equation}

Now using (\ref{Brf_i}) and (\ref{L^2f_i}), we get

$$  \| \EP f \|_{\BLpkA(B_R)}^p \lesssim D^m \| \EP f_i \|_{\BLpkA(B_R)}^p \lesssim D^m E^p \| f_i \|_{L^2}^p \lesssim D^{m - (m-1)\frac{p}{2}} E^p \| f \|_{L^2}^p. $$

By our choice of $p$, $p > \frac{2m}{m-1}$, and so the exponent of $D$ is negative.  If $m > k$, the exponent is negative with pretty large absolute value; if $m = k$, then the exponent is $- \delta$.  
If we choose $D = D(\eps, D_Z)$ sufficiently large, the power of $D$ dominates the implicit constant, and we get $\| \EP f \|_{\BLpkA(B_R)} \le E \| f \|_{L^2}$, which closes the induction in the non-algebraic case.

\subsection{The algebraic case} Next we turn to the algebraic case.  By definition, we know that there is a transverse complete intersection $Y^l$ of dimension $l < m$, defined using polynomials of degree $\le D = D(\eps, D_Z)$, so that 

\begin{equation} \label{Ybig} \mu_{\EP f}( \NY) \gtrsim \mu_{\EP f}(B_R). \end{equation}

In the algebraic case, we subdivide $B_R$ into smaller balls $B_j$ of radius $\rho$, chosen so that

\begin{equation} \label{defrho} \rho^{1/2 + \dl} = R^{1/2 + \dm}. \end{equation}


For each $j$, we define $f_j = \sum_{(\tv) \in \TTT_j} \ftv$, where

$$ \TTT_j := \{ (\tv) | \Ttv \cap N_{R^{1/2 + \dm}}(Y) \cap B_j \not= \emptyset \}. $$

On $N_{R^{1/2 + \dm}}(Y) \cap B_j$, $\EP f_j = \EP f + \RD(R) \| f \|_{L^2}$.  Therefore,

$$ \| \EP f \|_{\BLpkA(B_R)}^p \lesssim \sum_j \| \EP f_j \|_{\BLpkA(B_j)}^p + \RD(R) \| f \|_{L^2}^p. $$

We further subdivide $\TTT_j$ into tubes that are tangential to $Y$ and tubes that are transverse to $Y$.  As in definition \ref{defTZ}, we say that $\Ttv$ is tangent to $Y$ in $B_j$ if 

\begin{equation} \label{tangjdist} \Ttv \cap 2 B_j \subset N_{R^{1/2 + \dm}}(Y) \cap 2 B_j = N_{\rho^{1/2 + \dl}}(Y) \cap 2 B_j, \end{equation}

and for any $x \in \Ttv$ and $y \in Y \cap 2 B_j $ with $|x-y| \lesssim R^{1/2 + \dm} = \rho^{1/2 + \dl}$, 

\begin{equation} \label{tangjangle} \Angle (G(\theta), T Y_y ) \lesssim \rho^{-1/2 + \dl}. \end{equation}

We define the tangential wave packets by

$$ \TTT_{j, tang} = \{ (\tv) \in \TTT_j | \Ttv \textrm{ is tangent to $Y$ in $B_j$} \}. $$

And we define the transverse wave packets by 

$$ \TTT_{j, trans} := \TTT_j \setminus \TTT_{j, trans}. $$

We define $f_{j, tang} = \sum_{(\tv) \in \TTT_{j, tang}} \ftv$ and $f_{j, trans} = \sum_{(\tv) \in \TTT_{j, trans} \ftv}$ so 

$$ f_j = f_{j, tang} + f_{j, trans}. $$

Therefore we have

\begin{equation} \label{tang+trans} \sum_j \| \EP f_j \|_{\BLpkA(B_j)}^p  \lesssim  \sum_j \| \EP f_{j, tang} \|_{\BLpkAt(B_j)}^p + \sum_j \| \EP f_{j, trans} \|_{\BLpkAt(B_j)}^p. \end{equation}

We will control the contribution of the tangential wave packets by induction on the dimension $m$, and we will control the contribution of the transverse wave packets by induction on the radius $R$.

\subsection{The tangential sub-case} Suppose first that the tangential wave packets dominate the right-hand side of (\ref{tang+trans}).  In order to apply induction to $\EP f_{j, tang}$ on $B_j$, we redo the wave packet decomposition at a scale appropriate to $B_j$, as in Section \ref{secsmallerball}.  For brevity, during this discussion, we let $g = f_{j, tang}$.

$$ \tg = \sum_{\ttv} \tg_{\ttv} + \RD(R) \| f \|_{L^2}. $$

Before applying induction, we need to check that this wave packet decomposition is concentrated on pairs $(\ttv)$ that are tangent to $Y$ on $B_j$, in the sense of Definition \ref{defTZ}: in other words on pairs $(\ttv)$ so that

\begin{equation} \label{tangYdist}  \tTtv \subset N_{\rho^{1/2 + \dl}}(Y) \cap B_j, \end{equation}

\noindent and for any $x \in \tTtv$ and $y \in Y \cap B_j$ with $|x-y| \lesssim \rho^{1/2 + \dl}$, 

\begin{equation} \label{tangYangle}  \Angle (G(\tith), T_y Y ) \lesssim \rho^{-1/2 + \dl}. \end{equation}

We know that $g = f_{j, tang}$ is concentrated on wave packets $(\tv) \in \TTT_{j, tang}$, which obey (\ref{tangjdist}) and (\ref{tangjangle}).  
These tell us that $\Ttv \cap B_j$ lies in the desired neighborhood of $Y \cap B_j$ and makes a good angle with $T_y Y$.  For any $(\tv)$, $(\ftv)^\sim$ is concentrated on wave packets $(\ttv) \in \tilde \TTT_{\tv}$, by Lemma \ref{scalesftvtith}.  For $(\ttv) \in \tilde \TTT_{\tv}$, (\ref{tubetildetubedist}) and (\ref{tubetildetubeangle}) tell us that $\tTtv$ lies in a small neighborhood of $\Ttv \cap B_j$ and makes a small angle with $\Ttv$.  So if $(\tv) \in \TTT_{j, tang}$ and $(\ttv) \in \tilde \TTT_{\tv}$, then $\tTtv$ obeys (\ref{tangYdist}) and (\ref{tangYangle}).  We have now checked the hypotheses of Proposition \ref{mainind} for $\tg$ with the variety $Y$ on the ball $B_j$, and so we can apply induction on the dimension.  

By induction on the dimension, we get the following inequality:

$$ \| \EP f_{j, tang} \|_{\BLpkAt(B_j)} = \| \EP \tg \|_{\BLpkAt(B_\rho)} $$

$$ \le C(K, \eps, l, D(\eps, D_Z) )  \rho^{l \eps} \rho^{\delta (\log \bar A - \log (A/2) )} \rho^{ - e + \frac{1}{2}}  \| f_{j, tang} \|_{L^2}, $$

for 

$$2 \le p \le \bar p(k,l) := 2 \cdot \frac{l+k}{l+k-2},$$

where

$$ e = e(k,n,p) = \frac{1}{2} \left( \frac{1}{2} - \frac{1}{p} \right) (n+k). $$

Since $l < m$, $\bar p(k,m) < \bar p(k,l)$, and so our value of $p$ is in the range $2 \le p \le \bar p(k,l)$ and the bound above applies.  The number of balls $B_j$ is $\lesssim R^{O(\dl)}$.  Summing brutally over the balls, we see that

$$ \| \EP f \|_{\BLpkA(B_R) } \lesssim R^{O(\dl)} C(K, \eps, l, D(\eps, D_Z)) \rho^{l \eps} \rho^{\delta (\log \bar A - \log (A/2))} \rho^{ - e + \frac{1}{2}} \| f \|_{L^2}. $$

We note that the exponent $-e + 1/2$ may well be negative.  Nevertheless, $\rho^{-e + 1/2} \le R^{O(\dl)} R^{-e + 1/2}$.  Also, $\rho^{\delta (\log \bar A - \log A/2)} \le R^{\delta (\log \bar A - \log A/2)} = R^\delta R^{\delta (\log \bar A - \log A)}$.  Therefore, the last expression is

$$ \le R^{O(\dl)} C(K, \eps, l, D(\eps, D_Z)) R^{l \eps} R^{\delta (\log \bar A - \log (A/2))} R^{ - e + \frac{1}{2}} \| f \|_{L^2}. $$

Since $\dl$ is much smaller than $\eps$, $R^{O(\dl)} R^{l \eps} \le R^{m \eps}$, and the induction closes.  (We have to choose $C(K, \eps, m, D_Z)$ larger than $C(K, \eps, l, D(\eps, D_Z))$. )  This finishes the discussion of the tangential algebraic case.  

\subsection{The transverse sub-case} Suppose now that the transverse wave packets dominate (\ref{tang+trans}).  First, we note that

$$ \sum_j \| f_{j, trans} \|_{L^2}^2 = \sum_{\tv} |\{ j: (\tv) \in \TTT_{j, trans} \}| \| \ftv \|_{L^2}^2. $$

Next we claim that for each $(\tv)$, $|\{ j: (\tv) \in \TTT_{j, trans} \}| \lesssim_{\eps, D_Z} 1$.  In the discussion, we just abbreviate this as $\lesssim 1$.  This follows from Lemma \ref{transinterbound}, which controls the transverse intersections between a tube and an algebraic variety.  Let $T$ be the cylinder with the same center as $\tTtv$ and with radius $r = R^{1/2 + \dm} = \rho^{1/2 + \dl}$, and let $\alpha = \rho^{- 1/2 + \dl}$.  Let $\ell$ denote the central axis of $T$ and recall that $Y_{> \alpha}$ is the set $\{ y \in Y | \Angle ( T_y Y, \ell) > \alpha \}$.  If $(\tv) \in \TTT_{j, trans}$, then $T \cap Y_{> \alpha} \cap 2 B_j$ must be non-empty.  However, Lemma \ref{transinterbound} tells us that $T \cap Y_{> \alpha}$ is contained in $\le C D_Y^n$ balls of radius $\lesssim r \alpha^{-1} \sim \rho$.  Here $Y$ is defined by polynomials of degree $D_Y \le D(\eps, D_Z) \lesssim 1$.  Therefore, $(\tv) \in \TTT_{j, trans}$ for $\lesssim 1$ values of $j$.  Plugging this into the last equation, we get

\begin{equation} \label{fjtransl2}  \sum_j \| f_{j, trans} \|_{L^2}^2 \lesssim \| f \|_{L^2}^2. \end{equation}

Next we would like to study $\EP f_{j,trans}$ on each ball $B_j$ by doing induction on the radius.  
In order to do so, we redo the wave packet decomposition at a scale appropriate to $B_j$, as in Section \ref{secsmallerball}.  For brevity, during this discussion, we let $g = f_{j, trans}$.  

$$ \tg = \sum_{\ttv} \tg_{\ttv} + \RD(R) \| f \|_{L^2}. $$

We recall a couple definitions from Section \ref{secsmallerball}.  For any $b \in B_{R^{1/2 + \dm}}$, we define

$$ \tilde \TTT_{Z + b} := \{ (\ttv) : \tTtv \textrm{ is tangent to } Z+b \textrm{ in } B_j \}. $$

$$ \tg_{b} := \sum_{(\ttv) \in \tilde \TTT_{Z + b}} \tg_{\ttv}. $$

\noindent For each $b$, $\tg_{b}$ is concentrated in wave packets tangent to $Z + b$ in the ball $B_j$, and so we will be able to apply induction on the radius to study $\EP \tg_b$.  By (\ref{Egb}), if $y_j$ is the center of $B_j$ and $x = y_j + \tx \in B_j$, then

\begin{equation} \label{Egb2} | \EP \tg_b( \tx) | \sim  \chi_{N_{\rho^{1/2 + \dm}}(Z+b)}(x) | \EP g(x) |. \end{equation}

We define $f_{j, trans, b}$ so that $(f_{j, trans, b})^\sim = \tg_b$ (in other words, $f_{j, trans, b} = e^{- i \psi_y(\omega)} \tg_b$).  In this language, the last equation becomes

\begin{equation} \label{Efjb} | \EP f_{j, trans, b} (x) | \sim \chi_{N_{\rho^{1/2 + \dm}}(Z+b)} (x) |\EP f_{j, trans}(x)|. \end{equation}

Next we choose a set of vectors $b \in B_{R^{1/2 + \dm}}$.  The number of vectors $b$ that we choose is related to the geometry of $Z$.  We cover $N_{R^{1/2 + \dm}}(Z) \cap B_j$ with disjoint balls of radius $R^{1/2 + \dm}$, and in each ball $B$ we note the volume of $B \cap N_{\rho^{1/2 + \dm}}(Z)$.  We dyadically pigeonhole this volume: for each $s$ we consider

$$ \frak B_s := \{ B(x_0, R^{1/2 + \dm}) \subset N_{R^{1/2 + \dm}}(Z) \cap B_j : | B(x_0, R^{1/2 + \dm}) \cap N_{\rho^{1/2 + \dm}}(Z) | \sim 2^s \}. $$

\noindent We select a value of $s$ so that 

$$ \| \EP f_{j, trans} \|_{\BLpkAt(\cup_{B \in \frak B_s} B)} \gtrsim (\log R)^{-1}  \| \EP f_{j, trans} \|_{\BLpkAt(B_j)}. $$

Next we prune $\TTT_{j, trans}$ a little: we include $(\theta, v)$ only if $\Ttv$ intersects one of the balls of $\frak B_s$.  To avoid making the notation even heavier, we don't make a separate notation for the pruned set.  This pruning can only decrease $\| f_{j, trans} \|_{L^2}$, and it changes $\| \EP f_{j, trans} \|_{\BLpkAt(B_j)}^p$ by at most a factor of $\log R$.  

Now we are ready to choose our set of translations $\{ b \}$.  We choose a random set of $ | B_{R^{1/2 + \dm}} | / 2^s$
vectors $b \in B_{R^{1/2 + \dm}}$.  For a typical ball $B(x_0, R^{1/2 + \dm}) \in \frak B_s$, the union $\cup_b N_{\rho^{1/2 + \dm}}(Z+b)$ covers a definite fraction of the ball (in a random way).   Therefore, with high probability, we get

\begin{equation} \label{breakupjb} \| \EP f_{j, trans} \|_{\BLpkAt(B_j)}^p \lesssim \sum_b \| \EP f_{j, trans, b} \|_{\BLpkAt(N_{\rho^{1/2 + \dm}}(Z + b) \cap B_j)}^p. \end{equation}

\noindent On the other hand, a typical point of $B(x_0, R^{1/2 + \dm})$ lies in $\lesssim 1$ of the sets $N_{\rho^{1/2 + \dm}}(Z+b)$.  Using this geometric fact, we will show that

\begin{equation} \label{breakupjborthog} \sum_b \| \tg_{b} \|_{L^2}^2 \lesssim \| g \|_{L^2}^2. \end{equation}

To see (\ref{breakupjborthog}), we decompose $g = \sum_{\tith, w} g_{\tith, w}$ as in Section \ref{secsmallerball}.  If $g_{\tith, w}$ is not neglibigle, then $T_{\tith, w}$ must intersect one of the balls $B(x_0, R^{1/2 + \dm}) \in \frak B_s$.  Since the sets $N_{\rho^{1/2 + \dm}}(Z+b) \cap B(x_0, R^{1/2 + \dm})$ are essentially disjoint, Lemma \ref{disjborthog} tells us that

$$ \sum_b \| (g_{\tith, w})^\sim_b \|_{L^2}^2 \lesssim \| g_{\tith, w} \|_{L^2}^2. $$

But (as we saw in the proof of Lemma \ref{equidsmallerball}), 

$$ \tg_b = \sum_{\tith, w} (g_{\tith,w})^\sim_b $$

\noindent is an orthogonal decomposition, and $g = \sum_{\tith, w} g_{\tith, w}$ is an orthogonal decomposition, and so

$$ \sum_b \| \tg_b \|_{L^2}^2 \sim \sum_{b, \tith, w} \| (g_{\tith, w})^\sim_b \|_{L^2}^2 \lesssim \sum_{\tith, w} \| g_{\tith, w} \|_{L^2}^2 \sim \| g \|_{L^2}^2. $$ 

We now have all the estimates that we need in the transverse case, and we collect them here. 

\begin{equation} \label{jbdecBrLp}
 \| \EP f \|_{\BLpkA(B_R) }^p \lesssim \log R \sum_{j,b} \| \EP f_{j, trans, b} \|_{\BLpkAt(B_j)}^p. 
 \end{equation}

\begin{equation} \label{jbdecorth}
\sum_{j,b} \| f_{j, trans, b} \|_{L^2}^2 \lesssim \| f \|_{L^2}^2.
\end{equation}

By Lemma \ref{equidsmallerball}, 

\begin{equation} \label{jbdecequi}
\max_b \| f_{j, trans, b} \|_{L^2}^2  \le R^{O(\dm)} \left( \frac{R^{1/2}}{\rho^{1/2}} \right)^{-(n-m)} \| f_{j, trans} \|_{L^2}^2.
\end{equation}

(For later reference, we also record here: for each $j,b$ and each $\tilde \theta$, 

\begin{equation} \label{jbdecequitith}
\| f_{j, trans, b} \|_{L^2(\tith)}^2 \lesssim R^{O(\dm)} \left( \frac{R^{1/2}}{\rho^{1/2}} \right)^{-(n-m)} \| f \|_{L^2(2 \tith)}^2.
\end{equation}

\noindent This follows from Lemma \ref{equidsmallerball}.  This inequality doesn't appear in the proof of Theorem \ref{broadEP}, but it could be useful in some later refinements.)

By induction on the radius, we know that

\begin{equation*} \| \EP f_{j, trans,b} \|_{\BL^p_{k, A/2} (B_j)} \le C(K, \eps, m, D_Z) \rho^{m \eps}  \rho^{\delta (\log \bar A - \log (A/2))} \rho^{ - e + \frac{1}{2}}  \| f_{j, trans, b} \|_{L^2} \end{equation*}

$$ \le C(K, \eps, m, D_Z) R^\delta \rho^{m \eps} R^{\delta (\log \bar A - \log A)} \rho^{ - e + \frac{1}{2}}  \| f_{j, trans, b} \|_{L^2} . $$

Using these estimates, we can now prove (\ref{mainindest}) by induction on the radius.  

$$ \| \EP f \|_{\BLpkA(B_R)}^p \lesssim (\log R) \sum_{j,b} \| \EP f_{j, trans, b} \|_{\BLpkAt(B_j)}^p $$

$$ \le R^{O(\delta)} \left( C(K, \eps, m, D_Z) \rho^{m \eps}  R^{\delta (\log \bar A - \log (A/2))} \rho^{ - e + \frac{1}{2}} \right)^p \sum_{j,b}  \| f_{j, trans, b} \|_{L^2}^p. $$

Using (\ref{jbdecequi}) and (\ref{jbdecorth}),

$$ \sum_{j,b}  \| f_{j, trans, b} \|_{L^2}^p \lesssim R^{O(\dm)}  \left( \frac{R^{1/2}}{\rho^{1/2}} \right)^{-(n-m)(\frac{p}{2}-1)} \| f \|_{L^2}^p. $$

Now plugging in we get

$$ \| \EP f \|_{\BLpkA(B_R)}^p \lesssim R^{O(\dm)} \left( C(K, \eps, m, D_Z) \rho^{m \eps} R^{\delta (\log \bar A - \log A)} \rho^{ - e + \frac{1}{2}} \right)^p \left( \frac{R^{1/2}}{\rho^{1/2}} \right)^{-(n-m)(\frac{p}{2}-1)} \| f \|_{L^2}^p. $$

At the exponent $p = \bar p(k,m)$, 

$$ \left( \rho^{-e + 1/2} \right)^{p}  \left( \frac{R^{1/2}}{\rho^{1/2}} \right)^{-(n-m)(\frac{p}{2}-1)} = \left( R^{-e + 1/2} \right)^{p}. $$

(If $ m =k$, so that $p = \bar p(k,m) + \delta$, then this is true up to a factor $R^{O(\delta)}$.)
So plugging in our values of $p$ and $e$, and multiplying out, we get

$$  \| \EP f \|_{\BLpkA(B_R)}^p \le C R^{O(\dm)} (R/\rho)^{- m \eps}  \left( C(K, \eps, m, D_Z) R^{m \eps} R^{\delta (\log \bar A - \log A)} R^{ - e + \frac{1}{2}} \right)^p  \| f \|_{L^2}^p. $$

The constant $C$ on the right-hand side depends on $\eps$, $D_Z$ and the dimension $n$.
We have to check that $C R^{O(\dm)} (R/ \rho)^{-m \eps} \le 1$.  Note that $R/\rho = R^{\theta(\dl)}$.  We choose the $\delta$'s so that $\delta_m \ll \eps \delta_{m-1}$, and so $(R/\rho)^{- m \eps}$ dominates the other terms.  Therefore, the induction closes in the transverse algebraic case also.  

This finishes the proof of Theorem \ref{broadEP}.

\section{Going from $k$-broad estimates to regular estimates}

The paper \cite{BG} introduces a technique to go from multilinear estimates to regular $L^p$ estimates.  In this section, we follow this technique to go from $k$-broad estimates to regular $L^p$ estimates.

\begin{prop} \label{broadtoreg} Suppose that for all $K, \eps$, the operator $\EP$ obeys the $k$-broad inequality:

\begin{equation} \label{btrbroad} \| \EP f \|_{\BLpkA(B_R)} \lesssim_{K, \eps} R^\eps \| f \|_{L^q}.  \end{equation}

(Here the quantities $k, A, p,q$, are fixed, and the inequality holds for all $R$.)

If $p \le q \le \infty$ and $p$ is in the range

\begin{equation} \label{btrp} 2 \cdot \frac{2n - k + 2}{2n - k} \le p \le 2 \cdot \frac{k-1}{k-2}, \end{equation}

then $ \EP$ obeys

\begin{equation} \label{btrreg} \| \EP f \|_{L^p(B_R)} \lesssim_\eps R^\eps \| f \|_{L^q}. \end{equation}

\end{prop}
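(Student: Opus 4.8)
\emph{Proof plan.} The plan is to run the Bourgain--Guth scheme from \cite{BG}: localize $\EP f$ to balls of radius $K^2$, separate a ``broad'' part, on which the hypothesis (\ref{btrbroad}) applies directly, from a ``narrow'' part, on which a decoupling inequality together with parabolic rescaling lets us close an induction on the radius $R$. So fix $\eps>0$, cover $B^{n-1}$ by caps $\tau$ of radius $K^{-1}$, write $f=\sum_\tau f_\tau$, and decompose $B_R$ into balls $B_{K^2}$. For each $B_{K^2}$ I would run the standard dichotomy: if $\mu_{\EP f}(B_{K^2})$ is comparable, up to a fixed power of $K$, to $\int_{B_{K^2}}|\EP f|^p$, call $B_{K^2}$ \emph{broad}; otherwise call it \emph{narrow}, in which case, taking $(k-1)$-subspaces $V_1,\dots,V_A$ realizing the minimum in the definition of $\mu_{\EP f}(B_{K^2})$, the caps $\tau$ with $\Angle(G(\tau),V_a)>K^{-1}$ for all $a$ contribute only a negligible fraction of $\int_{B_{K^2}}|\EP f|^p$, so that $\int_{B_{K^2}}|\EP f|^p\lesssim\int_{B_{K^2}}\bigl|\sum_{\tau\in S(B_{K^2})}\EP f_\tau\bigr|^p$, where $S(B_{K^2})$ collects the caps lying within $K^{-1}$ of some $V_a$. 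Summing the broad balls, $\sum_{\mathrm{broad}}\int_{B_{K^2}}|\EP f|^p\lesssim K^{O(1)}\|\EP f\|_{\BLpkA(B_R)}^p\lesssim_{K,\eps}K^{O(1)}R^{\eps p}\|f\|_{L^q}^p$.

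For the narrow balls, the key point is that for each $B_{K^2}$ the caps in $S(B_{K^2})$, mapped through the frequency map $\xi$, lie in the $K^{-1}$-neighborhood of a bounded number of affine $(k-1)$-planes — this is the computation underlying Lemma \ref{equid1} — so $|S(B_{K^2})|\lesssim K^{k-2}$. Applying Bourgain's $\ell^2$ decoupling theorem \cite{B4} on $B_{K^2}$, valid throughout the range (\ref{btrp}), and then Holder from $\ell^2$ to $\ell^p$ over the $\lesssim K^{k-2}$ caps of $S(B_{K^2})$, yields
$$ \int_{B_{K^2}}|\EP f|^p\lesssim_\eps K^\eps\,K^{(k-2)(\frac{p}{2}-1)}\sum_{\tau\in S(B_{K^2})}\int_{B_{K^2}}|\EP f_\tau|^p. $$
Summing over the narrow balls and then over $\tau$ gives $\sum_{\mathrm{narrow}}\int_{B_{K^2}}|\EP f|^p\lesssim_\eps K^{\eps+(k-2)(\frac{p}{2}-1)}\sum_\tau\|\EP f_\tau\|_{L^p(B_R)}^p$.

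I would then bound each $\|\EP f_\tau\|_{L^p(B_R)}$ by parabolic rescaling and induction on $R$. Writing $\omega_\tau$ for the center of $\tau$ and $g_\tau(\eta):=f_\tau(\omega_\tau+K^{-1}\eta)$, the affine change of variables $y'=K^{-1}(x'+2x_n\omega_\tau)$, $y_n=K^{-2}x_n$ identifies $|\EP f_\tau(x)|$ with $K^{-(n-1)}|\EP g_\tau(y)|$; the image of $B_R$ is covered by $O(K^{n-1})$ balls of radius $R/K^2$, and $\|g_\tau\|_{L^q}\sim K^{(n-1)/q}\|f_\tau\|_{L^q}$. Applying the inductive hypothesis (\ref{btrreg}) at radius $R/K^2$ on each of these balls and summing, $\|\EP f_\tau\|_{L^p(B_R)}^p\lesssim_\eps K^{\beta(n,p,q)}(R/K^2)^{\eps p}\|f_\tau\|_{L^q}^p$ for an explicit exponent $\beta$. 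Since $p\le q$ and $\sum_\tau\|f_\tau\|_{L^q}^q\sim\|f\|_{L^q}^q$ over $\sim K^{n-1}$ caps, we get $\sum_\tau\|f_\tau\|_{L^q}^p\lesssim K^{(n-1)(1-p/q)}\|f\|_{L^q}^p$. Assembling the two contributions,
$$ \int_{B_R}|\EP f|^p\lesssim_{K,\eps}K^{O(1)}R^{\eps p}\|f\|_{L^q}^p+K^{\gamma(n,k,p,q)}K^{-2\eps p}R^{\eps p}\|f\|_{L^q}^p, $$
where $\gamma$ is a fixed combination of the powers of $K$ accumulated above.

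The induction on $R$ closes as long as $\gamma(n,k,p,q)<2\eps p$, which we can arrange — after fixing $\eps$ — by choosing $K=K(\eps)$ large and absorbing the $K$-dependent constant into the final implicit constant of (\ref{btrreg}); the base case of small $R$ is immediate. The main obstacle, and essentially the only content beyond bookkeeping, is the verification that $\gamma(n,k,p,q)\le 0$ — equivalently, that the loss $K^{(k-2)(p-2)/2}$ from decoupling together with the Jacobian factors from the rescaling is non-positive — for every $p$ in the range (\ref{btrp}). This is precisely the numerical computation performed in \cite{BG}; both endpoints of (\ref{btrp}) are sharp for the method, one of them dictated by the decoupling-and-rescaling numerology and the other by the geometry of the $k$-narrow balls. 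Along the way one should also check the routine points that the ``far'' caps and the rapidly-decaying tails of the wave-packet cutoffs are negligible, and that the $\ell^2\to\ell^p$ and $\ell^q\to\ell^p$ steps only use $2\le p\le q$.
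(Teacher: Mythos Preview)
Your overall strategy is the same as the paper's---broad/narrow split at scale $K^2$, decoupling plus H\"older on the narrow part, then parabolic rescaling and induction on $R$---but there is a concrete miscalculation in the rescaling step that keeps you from reaching the lower endpoint of (\ref{btrp}).

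Under your affine map $y'=K^{-1}(x'+2x_n\omega_\tau)$, $y_n=K^{-2}x_n$, the image of $B_R$ lies in a \emph{single} ball of radius $\sim R/K$ (the $y'$-coordinates have size $\lesssim R/K$, and $|y_n|\lesssim R/K^2\le R/K$). Covering it instead by $O(K^{n-1})$ balls of radius $R/K^2$ and summing the inductive bound over those balls costs you a factor $K^{n-1}$ with nothing to compensate: the gain from inducting at radius $R/K^2$ rather than $R/K$ is only $K^{-p\eps}$, which cannot absorb $K^{n-1}$ once $\eps$ is small. Tracking powers of $K$ with your covering, the $\eps$-free part of $\gamma$ becomes
\[
(k-2)\Bigl(\tfrac{p}{2}-1\Bigr)+(n+1)-(n-1)p+(n-1)+(n-1)=(k-2)\Bigl(\tfrac{p}{2}-1\Bigr)+3n-1-(n-1)p,
\]
and $\gamma\le 0$ then gives only $p\ge 2(3n-k+1)/(2n-k)$, strictly worse than (\ref{btrp}). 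Applying the inductive hypothesis once on $B_{CR/K}$, as the paper does, removes the extra $(n-1)$; the exponent becomes $(k-2)(\tfrac{p}{2}-1)+2n-(n-1)p$, and the condition $\gamma\le 0$ is exactly $p\ge 2(2n-k+2)/(2n-k)$.

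A smaller point you gloss over: the decoupling must be done separately for each of the $A$ subspaces $V_a$, and it is applied in dimension $k-1$ by restricting $\sum_{\tau\in V_a}\EP f_\tau$ to slices parallel to $V_a$ (this is Lemma~\ref{decoupnarrow} in the paper). That is where the upper bound $p\le 2(k-1)/(k-2)$ in (\ref{btrp}) enters---it is the range of validity of Bourgain's decoupling theorem in $\RR^{k-1}$---and your phrase ``valid throughout the range (\ref{btrp})'' hides this.
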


Remarks.  The lower bound on $p$ is important.  The upper bound is less important, and it could probably be improved.  

Theorem \ref{broadEP} together with Proposition \ref{broadtoreg} implies Theorem \ref{L^pEP}.  If $n$ is even, we use $k = \frac{n}{2} + 1$.  By Theorem \ref{broadEP}, we have $\| \EP f \|_{\BLpkA(B_R)} \lesssim R^\eps \| f \|_{L^2} \lesssim R^\eps \| f \|_{L^p}$ for $ p$ slightly bigger than $\pkn = 2 \cdot \frac{n+k}{n+k-2}$.  With our choice of $k$, we also have $\pkn = 2 \cdot \frac{2n - k + 2}{2n - k}$.   Applying Proposition \ref{broadtoreg}, we get $\| \EP f \|_{L^p(B_R)} \lesssim R^\eps \| f \|_{L^p}$ for $p$ slightly bigger than $\pkn$.  Interpolating with the trivial $L^\infty$ bound gives this estimate for all $p > \pkn$.  Finally, applying $\eps$-removal (\cite{T1}) gives Theorem \ref{L^pEP}.  If $n$ is odd, we use $k = \frac{n+1}{2}$.  The argument is the same, (but in this case, $\pkn > 2 \cdot \frac{2n - k + 2}{2n - k}$).

\begin{proof} By hypothesis, we have an inequality of the form

\begin{equation} \label{broadhyp} \sum_{B_{K^2} \subset B_R} \min_{V_1, ..., V_A} \max_{\tau \notin V_a} \int_{B_{K^2}} | \EP f_\tau |^p \le C(K, \eps) R^{p \eps} \| f \|_{L^q}^p. \end{equation}

Here $V_1, ..., V_A$ are $(k-1)$-planes, and we write $\tau \notin V_a$ as an abbreviation for $\Angle( G(\tau), V_a ) > K^{-1}$.  

For each $B_{K^2}$, we fix a choice of $V_1, ..., V_A$ achieving the minimum above.  Then we can write

\begin{equation} \label{broadnarrow} \int_{B_{K^2}} | \EP f|^p \lesssim K^{O(1)} \max_{\tau \notin V_a} \int_{B_{K^2}} | \EP f_\tau|^p + \sum_{a=1}^A \int_{B_{K^2}} \left| \sum_{\tau \in V_a} \EP f_\tau \right|^p. \end{equation}

The first term is the ``broad'' part, and it can be controlled by the $k$-broad estimate.  We handle the second term, the ``narrow'' part, by a decoupling-type argument.  We work with $B_{K^2}$ so that we can cleanly apply the decoupling theorem from \cite{B4}.  (The paper \cite{BG} contains a different but closely related argument.) 

\begin{theorem} (\cite{B4}) \label{decthmbour} Suppose that $g: \RR^m \rightarrow \CC$ with $\hat g$ supported in the $K^{-2}$-neighborhood of the truncated paraboloid.  Divide this neighborhood into slabs $\tau$ with $m-1$ long directions of length $K^{-1}$ and one short direction of length $K^{-2}$.  Write $g = \sum_\tau g_\tau$, where $\hat g_\tau = \chi_\tau \hat g$.  Then on any ball of radius $K^2$, for $2 \le p \le 2 \cdot \frac{m}{m-1}$, 

\begin{equation} \label{decoupling} \| g \|_{L^p(B_{K^2})} \lesssim_\delta K^\delta \left( \sum_\tau \| g_\tau \|_{L^p(W_{B_{K^2}})}^2 \right)^{1/2}, \end{equation}

\noindent where $W_{B_{K^2}}$ is a weight measure, approximately the volume measure on $B_{K^2}$ and rapidly decaying.
\end{theorem}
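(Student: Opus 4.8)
The plan is to prove \eqref{decoupling} by Bourgain's method from \cite{B4}: run a broad--narrow dichotomy in the style of \cite{BG}, feed the transverse part into the $m$-linear restriction estimate of Bennett, Carbery and Tao, and close by induction on the scale $K$ and on the dimension $m$. First I would reduce to the single exponent $p=2m/(m-1)$; the remaining range $2\le p\le 2m/(m-1)$ then follows by interpolating \eqref{decoupling} at $p=2m/(m-1)$ with its trivial $\ell^2L^2$ version, which holds with constant $1$ since the frequency supports of the $g_\tau$ have bounded overlap. Let $D(K)$ be the smallest constant so that $\|g\|_{L^p(B_{K^2})}\le D(K)\big(\sum_\tau\|g_\tau\|_{L^p(W_{B_{K^2}})}^2\big)^{1/2}$ for all $g$ with $\hat g$ in the $K^{-2}$-neighborhood of the truncated paraboloid $P^{m-1}\subset\RR^m$ and all balls of radius $K^2$. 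Using the parabolic rescaling symmetry of the paraboloid (affinely map a $K_1^{-1}$-cap onto the full paraboloid at scale $K/K_1$) one gets the near-multiplicativity $D(K_1K_2)\lesssim D(K_1)D(K_2)$, so it suffices to show $D(K)\lesssim_\delta K^\delta$ along $K\to\infty$, and for that it suffices to prove a recursion $D(K)\le C_\delta K^\delta\big(1+D(K/K_0)\big)$ for a suitable constant auxiliary scale $K_0=K_0(\delta)$: iterating it $\sim\log K/\log K_0$ times gives only a $K^{O(\delta)}$ loss. The base cases are $m=1$ and $K=O(1)$ (trivial) and $m=2$ (the parabola, where $\ell^2L^4$ decoupling is the classical bi-orthogonality/square-function estimate of C\'ordoba and Fefferman).

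\textbf{Broad--narrow dichotomy.} Fix $K_0$ and partition $P^{m-1}$ into $K_0^{-1}$-caps $\sigma$, $g=\sum_\sigma g_\sigma$. A standard broad--narrow decomposition shows that on each $K_0^2$-ball $B\subset B_{K^2}$, either (narrow) the $L^p(B)$ norm of $g$ is dominated, up to $K_0^{O(1)}$, by the contribution of the $g_\sigma$ with $\sigma$ in the $O(K_0^{-1})$-neighborhood of a single hyperplane; or (broad) $\|g\|_{L^p(B)}\lesssim K_0^{O(1)}\big(\int_B\prod_{j=1}^m|g_{\sigma_j}|^{p/m}\big)^{1/p}$ for some quantitatively transverse caps $\sigma_1,\dots,\sigma_m$. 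In the narrow case the $O(K_0^{m-2})$ relevant caps can, after an affine change of variables, be viewed as a $K_0^{-2}$-neighborhood of a paraboloid $P^{m-2}$ in $\RR^{m-1}$ crossed with an interval; applying the $(m-1)$-dimensional $\ell^2$-decoupling at the running exponent $p$ — available by induction on $m$ because $p=2m/(m-1)<2(m-1)/(m-2)$ lies inside the $P^{m-2}$ range — decouples $g$ on $B$ into the $\sigma$'s with constant $O_{K_0}(1)$, and parabolic rescaling of each $\sigma\subset B_{K^2}$ then contributes the factor $D(K/K_0)$. Summing the narrow balls and using $p\ge2$, the narrow part is $\lesssim_{K_0,\delta}D(K/K_0)$ times the right-hand side of \eqref{decoupling}.

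\textbf{The broad term via multilinear restriction.} It remains to bound $\int_{B_{K^2}}\prod_{j=1}^m|g_{\sigma_j}|^{p/m}$ with transverse $\sigma_j$ at $p=2m/(m-1)$. Refine each $g_{\sigma_j}=\sum_{\tau\subset\sigma_j}g_\tau$ into $K^{-1}$-caps and pass to the wave-packet description of Section \ref{basicwavepack}: after the usual pigeonholing that reduces to wave packets of comparable amplitude, each $g_\tau$ is essentially $\EP F_\tau$ for some $F_\tau$ supported on $\tau$, localized to $R^{1/2}$-tubes with $R\sim K^2$. Since the $\sigma_j$ are transverse, the $m$-linear restriction estimate of Bennett--Carbery--Tao — Theorem \ref{nlinear} applied in $\RR^m$, whose exponent is precisely $2m/(m-1)$ — gives $\big\|\prod_j|\EP F_j|^{1/m}\big\|_{L^{2m/(m-1)}(B_{K^2})}\lesssim_\delta K^\delta\prod_j\|F_j\|_{L^2}^{1/m}$. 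Inserting the normalization $\|F_\tau\|_{L^2}^2\sim R^{-1/2}\|\EP F_\tau\|_{L^2(\mathrm{tube})}^2$ together with H\"older and the elementary bound $\big(\sum_\tau a_\tau^2\big)^{1/2}\ge\|(a_\tau)\|_{\ell^p}$ for $p\ge2$ converts this into the $m$-linear $\ell^2$-decoupling inequality $\big\|\prod_j|g_{\sigma_j}|^{1/m}\big\|_{L^p(B_{K^2})}\lesssim_\delta K^\delta\prod_j\big(\sum_{\tau\subset\sigma_j}\|g_\tau\|_{L^p}^2\big)^{1/(2m)}$ with no power of the number of caps. Combined with the narrow estimate this yields exactly $D(K)\le C_\delta K^\delta\big(1+D(K/K_0)\big)$ and the induction closes.

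\textbf{Main obstacle.} The delicate part is the bookkeeping of auxiliary-scale losses: the dichotomy is iterated $\sim\log_{K_0}K$ times, each step costing $K_0^{O(1)}$, so $K_0$ must be fixed as a function of $\delta$ before $K$ is taken large; and one must check that the narrow (lower-dimensional) terms, handled by $P^{m-2}$-decoupling, never push the exponent past $2m/(m-1)$ — which is why the dimensional induction is carried at the running exponent rather than at the critical one. A secondary, routine-but-not-trivial nuisance is the dictionary between the physical-space extension-operator form of the $m$-linear restriction estimate and the frequency-cap form of \eqref{decoupling}: it requires the reduction to wave packets of comparable size, control of how many tubes meet a point, and the comparison of $\ell^2$ and $\ell^p$ norms over the fine caps $\tau$ — and it is exactly here that $p\le 2m/(m-1)$ enters, keeping $p/2\le m/(m-1)$ so the tube-counting losses stay $K^\delta$.
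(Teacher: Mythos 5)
You should know that the paper contains no proof of this statement at all: Theorem \ref{decthmbour} is quoted from Bourgain \cite{B4} and is used here purely as a black box (via Lemma \ref{decoupnarrow}), so there is no internal argument to compare yours against. What you have written is, in substance, a reconstruction of the argument of the cited source: Bourgain obtains exactly this range of $\ell^2$-decoupling by the broad--narrow decomposition of \cite{BG}, the Bennett--Carbery--Tao multilinear restriction theorem for the broad part, lower-dimensional (cylindrical) decoupling for the narrow part, and parabolic rescaling with induction on scales and on dimension -- so your route is the same as the one behind the citation, not a genuinely different one, and its skeleton is sound. Two places would need more care in a full write-up. First, the reduction of the range $2 \le p \le 2\cdot\frac{m}{m-1}$ to the endpoint ``by interpolation'' is not a one-line Riesz--Thorin application, since the inequality is restricted to functions with prescribed Fourier support and carries weights; it is cleaner either to invoke a standard interpolation lemma for decoupling constants or simply to run your induction at each fixed $p$ in the range, which works because the H\"older loss incurred by applying the $L^{2m/(m-1)}$ multilinear estimate at a smaller $p$ is absorbed in the same bookkeeping. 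Second, the passage from the $L^2$-based multilinear restriction inequality to the $\ell^2 L^p$-based multilinear decoupling is the crux: it requires the local (weighted, thickened-neighborhood) form of the Bennett--Carbery--Tao estimate rather than the global extension form, together with local $L^2$ orthogonality on $B_{K^2}$ and the pigeonholing of wave packets, and the powers of $K$ cancel exactly because $p \le 2\cdot\frac{m}{m-1}$ -- you identify this correctly, but it is the step where most of the technical work of \cite{B4} actually lives.
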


Applying this decoupling estimate with $m= k-1$, we get the following lemma:

\begin{lemma} \label{decoupnarrow} 

$$ \left( \int_{B_{K^2}} \left| \sum_{\tau \in V_a} \EP f_\tau \right|^p \right)^{1/p} \lesssim_\delta K^\delta \left[ \sum_{\tau \in V_a} \left( \int W_{B_{K^2}} | \EP f_\tau |^p \right)^{2/p} \right]^{1/2}. $$
\end{lemma}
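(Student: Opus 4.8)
The plan is to invoke Theorem \ref{decthmbour} with $m = k-1$ — precisely the dimension for which its range $2 \le p \le 2\cdot\frac{m}{m-1} = 2\cdot\frac{k-1}{k-2}$ matches the upper endpoint in Proposition \ref{broadtoreg}. The caps $\tau$ with $\Angle(G(\tau),V_a) \le K^{-1}$ (the caps appearing in the sum over $\tau \in V_a$) are organized along a $(k-2)$-dimensional sub-paraboloid: since $G$ is, up to normalization, the affine map $\omega \mapsto (-2\omega,1)$, the set of $\omega$ with $G(\omega)\in V_a$ is an affine subspace $A \subset \RR^{n-1}$, and because $V_a$ is $(k-1)$-dimensional, $A$ is $(k-2)$-dimensional (in the degenerate configurations — $V_a$ contained in $\{\xi_n = 0\}$, or $A\cap B^{n-1}$ empty — no cap $\tau$ satisfies the angle condition and the lemma is vacuous). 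So the relevant $\tau$ have centers in the $K^{-1}$-neighborhood of $A$, and there are $\sim K^{k-2}$ of them. First I would reduce to a standard position by applying a rotation of the $\omega$-variables: such a rotation preserves the paraboloid $\{(\omega,|\omega|^2)\}$, hence merely relabels $\EP$ and rotates $B_{K^2}$ together with its weight, and it lets me assume the linear part of $A$ is $\mathrm{span}(e_1,\dots,e_{k-2})$. Writing $\omega = (\omega'',\omega''')$ with $\omega''\in\RR^{k-2}$, $\omega'''\in\RR^{n-k+1}$, every relevant $\tau$ then has $\omega'''$-coordinates confined to a single ball of radius $\lesssim K^{-1}$, while its $\omega''$-part ranges over the $K^{-1}$-caps of $B^{k-2}$; split the physical variable correspondingly as $x = (x'',x''',x_n)$, so that the $(k-2)$-dimensional paraboloid relevant here sits in the $(k-1)$-dimensional $(\omega'',\xi_n)$-frequency space dual to $(x'',x_n)$.

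Next I would slice $\RR^n$ into the $(k-1)$-dimensional affine subspaces $H_c = \{x'''= c\}$. Restricting $\sum_{\tau\in V_a}\EP f_\tau$ to $H_c$ gives a function of $(x'',x_n)\in\RR^{k-1}$ whose Fourier transform is the pushforward of that of $\sum_\tau \EP f_\tau$ under $(\xi'',\xi''',\xi_n)\mapsto(\xi'',\xi_n)$. Since $|\omega'''|^2 \lesssim K^{-2}$ on each relevant $\tau$, the term $|\omega'''|^2$ in $|\omega|^2 = |\omega''|^2 + |\omega'''|^2$ contributes only an $O(K^{-2})$ shift in $\xi_n$, so the Fourier transform of $\EP f_\tau|_{H_c}$ is supported in a $CK^{-2}$-neighborhood of the $(k-2)$-dimensional truncated paraboloid, inside the slab determined by the $\omega''$-cap of $\tau$. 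Hence Theorem \ref{decthmbour} with $m = k-1$, applied on each ball $B_{K^2}$ of the slice $H_c$ (the constant $C$ is absorbed by covering each slab with $O(1)$ standard slabs, or by a bounded dilation), gives for $2 \le p \le 2\cdot\frac{k-1}{k-2}$ the slicewise estimate
$$ \Big\| \sum_{\tau \in V_a} \EP f_\tau(\cdot,c,\cdot) \Big\|_{L^p(B_{K^2}\cap H_c)} \lesssim_\delta K^\delta \Big( \sum_{\tau \in V_a} \big\| \EP f_\tau(\cdot,c,\cdot) \big\|_{L^p(W_{B_{K^2}}\cap H_c)}^2 \Big)^{1/2}. $$

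Finally I would raise this to the $p$-th power and integrate over $c$ in the projection of $B_{K^2}$ to the $x'''$-variables. Since $p/2 \ge 1$, the triangle inequality in $L^{p/2}$ in the variable $c$ moves the sum over $\tau$ outside the integral, and Fubini recovers the full-space norms:
$$ \int \Big( \sum_{\tau} \big\| \EP f_\tau(\cdot,c,\cdot)\big\|_{L^p(W_{B_{K^2}})}^2 \Big)^{p/2} dc \le \Big( \sum_\tau \Big( \int \big\| \EP f_\tau(\cdot,c,\cdot) \big\|_{L^p(W_{B_{K^2}})}^p \, dc \Big)^{2/p} \Big)^{p/2} = \Big( \sum_\tau \big\| \EP f_\tau \big\|_{L^p(W_{B_{K^2}})}^2 \Big)^{p/2}, $$
while the left-hand side dominates $\big\| \sum_\tau \EP f_\tau \big\|_{L^p(B_{K^2})}^p$. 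Taking $p$-th roots yields Lemma \ref{decoupnarrow}.

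The only real obstacles are bookkeeping ones: tracking how the $\omega$-rotation acts on the weight $W_{B_{K^2}}$ (it is a rotation, hence harmless), passing from the $CK^{-2}$-neighborhood to the $K^{-2}$-neighborhood literally required by Theorem \ref{decthmbour} (standard, via an $O(1)$-fold covering of slabs or a bounded dilation), and discarding the degenerate positions of $V_a$ for which no cap $\tau$ is captured. The conceptual point is simply that decoupling in the $\omega'''$-directions is trivial — the entire frequency region and each individual slab have the same $\sim K^{-1}$ extent there, so there is effectively one slab — so the only genuine decoupling needed is for the $(k-2)$-dimensional paraboloid in the $\omega''$-directions, which is exactly Theorem \ref{decthmbour} at $m = k-1$; this is why the upper endpoint of $p$ in Proposition \ref{broadtoreg} is precisely $2\cdot\frac{k-1}{k-2}$.
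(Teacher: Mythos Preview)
Your approach is essentially the paper's---slice $B_{K^2}$ into $(k-1)$-dimensional planes, apply Theorem~\ref{decthmbour} with $m=k-1$ on each slice, then recombine via Minkowski in $L^{p/2}$---but there is a genuine gap in your normalization step.

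After rotating so that the linear part of $A$ is $\mathrm{span}(e_1,\dots,e_{k-2})$, the affine subspace $A$ has the form $\{(\omega'',\omega_0''')\}$ for some fixed $\omega_0'''\in\RR^{n-k+1}$, which need not vanish (it vanishes iff $e_n\in V_a$). The relevant caps satisfy $|\omega'''-\omega_0'''|\lesssim K^{-1}$, not $|\omega'''|\lesssim K^{-1}$; your claim that $|\omega'''|^2\lesssim K^{-2}$ is therefore false in general. Within a single cap $\tau$ one has
\[
|\omega'''|^2 = |\omega_0'''|^2 + 2\omega_0'''\cdot(\omega'''-\omega_0''') + |\omega'''-\omega_0'''|^2,
\]
and the linear middle term makes $|\omega'''|^2$ vary by $O(K^{-1})$ across $\tau$. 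Hence the Fourier support of $\EP f_\tau$ restricted to your slice $H_c=\{x'''=c\}$ lies only in an $O(K^{-1})$-neighborhood of the $(k-2)$-dimensional paraboloid, not the $O(K^{-2})$-neighborhood that Theorem~\ref{decthmbour} requires at scale $K^2$.

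The fix is minor. Either (i) precede the rotation by a Galilean boost $\omega\mapsto\omega-(0,\omega_0''')$, which is a symmetry of the paraboloid acting as a bounded shear in $x$ and a unimodular factor on $\EP f$; this forces $\omega_0'''=0$, and then indeed $|\omega'''|\lesssim K^{-1}$ and your argument runs. Or (ii) slice along planes parallel to $V_a$ itself, as the paper does: projecting the frequency $(\omega'',\omega''',|\omega|^2)$ onto $V_a=\mathrm{span}\big(e_1,\dots,e_{k-2},(0,-2\omega_0''',1)\big)$ gives a last coordinate proportional to $|\omega''|^2+|\omega'''-\omega_0'''|^2-|\omega_0'''|^2$, and now $|\omega'''-\omega_0'''|^2\lesssim K^{-2}$ delivers the correct thickness automatically. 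With either correction the remainder of your argument is fine and coincides with the paper's.
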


\begin{proof} On $B_{K^2}$, we use coordinates $(u, v)$ where $v$ is parallel to $V_a$ and $u$ is perpendicular to $V_a$.  We write $B_{u, K^2}$ for a ball of radius $K^2$ in the $u$-coordinates and $B_{v, K^2}$ for a ball of radius $K^2$ in the $v$ coordinates.
If we restrict $\EP f_\tau$ to the $k-1$-plane $\{ u \} \times \RR^{k-1}$ (parallel to $V_a$), then its Fourier transform is supported in the $K^{-2}$ neighborhood of a cap $\tau'$ in the $K^{-2}$-neighborhood of a paraboloid.  By Theorem \ref{decthmbour}, we get

\begin{equation*}
 \left\|  \sum_{\tau \in V_a} \EP f_\tau \right\|_{L^p(\{ u \} \times B_{v, K^2})} \lesssim_\delta K^\delta \left( \sum_{\tau \in V_a} \| \EP f_\tau \|_{L^p(\{ u \} \times W_{B_{v, K^2}})}^2 \right)^{1/2}. \end{equation*}

Using Fubini and Minkowski, we then get

$$ \left\|  \sum_{\tau \in V_a} \EP f_\tau \right\|_{L^p(B_{u, K^2} \times B_{v, K^2})} \lesssim_\delta K^\delta \left( \sum_{\tau \in V_a} \| \EP f_\tau \|_{L^p(W_{B_{K^2}})}^2 \right)^{1/2}. $$

\end{proof}

The number of $\tau \in V_a$ is $\lesssim K^{k-2}$.  Applying Holder's inequality, we see that

\begin{equation} \label{decnarrow2} \int_{B_{K^2}} \left| \sum_{\tau \in V_a} \EP f_\tau \right|^p
\lesssim_\delta K^\delta K^{(k-2)(\frac{p}{2} - 1)} \sum_{\tau \in V_a}  \int W_{B_{K^2}} | \EP f_\tau |^p. 
\end{equation}

At this point, we have gotten as much as we can from the knowledge that $\tau \in V_a$, and we relax this estimate to

\begin{equation} \label{decnarrow3} \int_{B_{K^2}} \left| \sum_{\tau \in V_a} \EP f_\tau \right|^p
\lesssim_\delta K^\delta K^{(k-2)(\frac{p}{2} - 1)} \sum_{\tau}  \int W_{B_{K^2}} | \EP f_\tau |^p. 
\end{equation}

Next we sum this inequality over all $a = 1, ..., A$ and over all $B_{K^2} \subset B_R$.  We let $W = \sum_{B_{K^2} \subset B_R} W_{B_{K^2}}$.  

\begin{equation} \label{decnarrowsum} 
\sum_{B_{K^2} \subset B_R} \sum_{a=1}^A \int_{B_{K^2}} \left| \sum_{\tau \in V_a} \EP f_\tau \right|^p \lesssim K^\delta K^{(k-2)(\frac{p}{2} - 1)} \sum_\tau \int W | \EP f_\tau|^p.  \end{equation}

We note that $W \lesssim 1$ on $B_{2R}$ and $W \le \RD(R)$ outside $B_{2R}$.  
Therefore we get

\begin{equation} \label{decnarrowsum2} 
\sum_{B_{K^2} \subset B_R} \sum_{a=1}^A \int_{B_{K^2}} \left| \sum_{\tau \in V_a} \EP f_\tau \right|^p \lesssim K^\delta K^{(k-2)(\frac{p}{2} - 1)} \sum_\tau \int_{B_{2R}} | \EP f_\tau|^p + \RD(R) \| f \|_{L^q}^p.  \end{equation}

Combining this estimate for the narrow part with our estimate for the broad part, we have

\begin{equation} \label{broad+dec}
\int_{B_R} | \EP f|^p \le C(K, \eps) R^{p \eps} \| f \|_{L^q}^p + C K^\delta K^{(k-2)(\frac{p}{2} - 1)} \sum_\tau \int_{B_{2R}} | \EP f_\tau|^p.
\end{equation}

With this inequality in hand, we will prove (\ref{btrreg}) by induction on the radius, where we use the induction assumption in order to handle the contribution of the $f_\tau$ terms.  The inequality we wish to prove is

\begin{equation} \label{btrgoal}
\int_{B_R} | \EP f|^p \le \bar C(\eps) R^{p \eps} \| f \|_{L^q}^p.
\end{equation}

By induction on the radius we can assume that (\ref{btrgoal}) holds for radii less than $R/2$.  We use this induction and parabolic rescaling to handle the contribution of each $f_\tau$.  

On the ball $\tau$ we introduce new coordinates.  Let $\omega_\tau$ be the center of $\tau$, and recall that the radius of $\tau$ is $K^{-1}$.  Then we introduce a new coordinate $\tilde \omega \in B^{n-1}$, by

\begin{equation} \label{defomegatilde} \tilde \omega = K ( \omega - \omega_\tau). \end{equation}

We rewrite the phase in these coordinates:

$$ x_1 \omega_1 + ... + x_{n-1} \omega_{n-1} + x_n | \omega_n |^2 = 
\Fcn(x) + \tilde x_1 \tilde \omega_1 + ... + \tilde x_{n-1} \tilde \omega_{n-1} + \tilde x_n | \tilde \omega|^2, $$

where $\Fcn(x)$ denotes a function of $x$ only and

$$ \tilde x_j = K^{-1} (x_j + 2 \omega_{\tau,j} x_n) \textrm{ for } 1 \le j \le n-1; \tilde x_n = K^{-2} x_n. $$

Here $\omega_{\tau,j}$ denotes the $j^{th}$ coordinate of $\omega_\tau$.  Note that the linear transformation $x \mapsto \tilde x$ sends $B_R$ into $B_{C R K^{-1}}$ and has Jacobian $\sim K^{-(n+1)}$.

We define 

$$\tilde f_\tau(\tilde \omega) = f_\tau (\omega) = f_\tau( K^{-1} \tilde \omega + \omega_\tau ), $$

so that

$$ |E f_\tau(x)| = K^{-(n-1)} | E \tilde f_\tau (\tilde x) |. $$

By induction on the radius, we can assume that (\ref{btrgoal}) holds for $\tilde f_\tau$ on a ball of radius $C R K^{-1}$:

$$ \int_{B_{C R K^{-1}}} | \EP \tilde f_\tau|^p \lesssim \bar C(\eps) R^{p \eps} K^{-p \eps} \| \tilde f_\tau \|_{L^q}^p. $$

By change of variables, we have

$$ \int_{B_{2R}} | \EP f_\tau|^p \lesssim K^{(n+1)} K^{-(n-1)p}  \int_{B_{C R K^{-1}}} | \EP \tilde f_\tau|^p \lesssim $$

$$ \lesssim \bar C(\eps) (R/K)^{p \eps} K^{(n+1) - (n-1)p } \| \tilde f_\tau \|_{L^q}^p =
\bar C(\eps) R^{p \eps} K^{(n+1) - (n-1)p - p \eps} K^{(n-1)\frac{p}{q}}  \| f_\tau \|_{L^q}^p. $$

Plugging this bound into (\ref{broad+dec}), we get

\begin{equation} \label{broad+dec+parab}
\int_{B_R} | \EP f|^p \le C(K, \eps) R^{p \eps} \| f \|_{L^q}^p + C \bar C(\eps) R^{p \eps} K^{(k-2)(\frac{p}{2} - 1) + (n+1) - (n-1)p - p \eps + \delta} K^{(n-1)\frac{p}{q}} \sum_\tau \| f_\tau \|_{L^q}^p. 
\end{equation}

There are $K^{n-1}$ different $\tau \subset B^{n-1}$. Since $p \le q$, we can apply Holder to see that

$$ \sum_\tau \| f_\tau \|_{L^q}^p \le \left( \sum_\tau \| f_\tau \|_{L^q}^q \right)^{p/q} K^{(n-1) ( 1 - \frac{p}{q})} = \| f \|_{L^q}^p K^{(n-1) ( 1 - \frac{p}{q})}. $$

\noindent Plugging this into the last inequality, we see that the dependence on $q$ drops out, and we are left with:

\begin{equation} \label{broad+dec+parab2}
\int_{B_R} | \EP f|^p \le C(K, \eps) R^{p \eps} \| f \|_{L^q}^p + C \bar C(\eps) R^{p \eps} K^{(k-2)(\frac{p}{2} - 1) + (n+1) - (n-1)p + (n-1) - p \eps + \delta} \| f \|_{L^q}^p. 
\end{equation}

We can close the induction as long as the exponent of $K$ is negative.  (First we choose $K$ large enough so that the second term is bounded by $(1/2) \bar C(\eps) R^{p \eps} \| f \|_{L^p}^p$.  Then we choose $\bar C(\eps)$ sufficiently large so that the first term is also bounded by $(1/2) \bar C(\eps) R^{p \eps} \| f \|_{L^p}^p$.)  Given $\eps > 0$, we can choose $\delta < \eps$.  So the induction closes as long as

$$ (k-2)\left(\frac{p}{2} - 1 \right) + (n+1) - (n-1)p + (n-1) \le 0. $$

This is equivalent with the lower bound for $p$ in our hypothesis (\ref{btrp}): $2 \cdot \frac{2n - k + 2}{2n - k} \le p$.
\end{proof}

\section{Appendix: Keeping track of parameters}

We have several small parameters.  In this appendix, we try to provide a reference to help the reader keep track of the parameters.  We list all the parameters, how they compare to each other, and where they appear in the argument.  

We begin with the small parameters.  For each $\eps > 0$, there is a sequence of small parameters 

$$ \dt \ll \delta_{n-1} \ll \delta_{n-2} \ll ... \ll \delta_1 \ll \delta_0 \ll \eps. $$

In this sequence, each parameter is far smaller than the next.  For instance, we will use that $\delta_{m} < \eps \delta_{m-1}$.  

The parameter $\dt$ appears in the wave packet decomposition.  The tubes $\Ttv$ in the wave packet decomposition have thickness $R^{1/2 + \dt}$.  The parameter $\dm$ appears in the $m$-dimensional case of the main inductive estimate, Proposition \ref{mainind}: in this estimate we suppose that $f = \sum_{\tv} \ftv$ is concentrated on wave packets that are $R^{-1/2 + \dm}$-tangent to $Z$ on the ball $B_R$.  

Another geometric parameter that appears is the radius $\rho$.   In the transverse algebraic case, we decompose $B_R$ into smaller balls $B_j$.  If we are working on tubes that cluster in the neighborhood of an $m$-dimensional variety $Z$, and are transverse to an $l$-dimensional variety $Y$, then the radius of each $B_j$ is $\rho$ given by

$$ \rho^{1/2 + \dl} = R^{1/2 + \dm}. $$

\noindent The quotient $R/\rho$ has size $R^{\theta(\dl)}$, which dominates $R^{O(\dm)}$. 

Then there are positive parameters $K, A$.  We have 

$$ 1 \ll A \ll K. $$

We need $A = A(\eps)$ sufficiently large to run the proof of Theorem \ref{broadEP}, and the broad inequality is most useful when $K$ is much larger than $A$.

Given $\eps$, we then fix the small parameters $\delta$ and the larger parameters $A, K$.  Then we consider $R \rightarrow \infty$.  In the statement of Theorem \ref{broadEP}, the constant depends on $\eps$ and on $K$.  By choosing this constant large enough, the theorem holds trivially unless $R$ is very large compared to all these fixed parameters.

\section{Further directions}

\subsection{Honest $k$-linear estimates} Our main result, Theorem \ref{broadEP} is a weak version of the $k$-linear restriction estimate from Conjecture \ref{klinear}.  For some purposes, we have seen that Theorem \ref{broadEP} is a good substitute for a $k$-linear restriction estimate, but there are surely other situations where an honest $k$-linear estimate is better.  When I tried to prove Conjecture \ref{klinear} using this method, I ran into the following problem.  There are $k$ different functions $\EP f_j$ to consider.  It may happen that for some of these $k$ functions, the wave packets of $\EP f_j$ are tangent to a variety $Z$, and for others of these $k$ functions, the wave packets of $\EP f_j$ are transverse to the variety $Z$.  I didn't find a good way to deal with this scenario.  The $k$-broad norm, $\BLpkA$, is designed to get around this situation.

\subsection{Kakeya-type estimates for low degree varieties}

We now return to the extension operator for the paraboloid.  Theorem \ref{broadEP} gives essentially sharp broad $L^p$ estimates of the form:

\begin{equation} \label{furbroadl2} \| \EP f \|_{\BLpkA(B_R)} \lesssim R^\eps \| f \|_{L^2}. \end{equation}

\noindent We have seen that this estimate holds if and only if $p \ge \pkn$.  What if we consider other norms on the right-hand side?  For some $q$ larger than 2, can we prove an estimate 

\begin{equation} \label{furbroadlq} \| \EP f \|_{\BLpkA(B_R)} \lesssim R^\eps \| f \|_{L^q}, \end{equation}

\noindent for some $p < \pkn$?

In the introduction, we mentioned some sharp examples for Theorem \ref{broadEP}.  The first question to ask ourselves is whether an inequalitiy of the form (\ref{furbroadlq}) may hold for these examples.  In the examples we considered, the wave packets $\EP \ftv$ concentrate in the $R^{1/2}$-neighborhood of a low degree variety $Z$.  Let us consider the set of caps $\theta$ that can appear in such an example.  Define $\Theta(Z)$ as

$$ \Theta(Z) := \{ \theta \textrm{ so that for some } v, \Ttv \subset N_{R^{1/2 + \dt}}(Z) \}, $$

\noindent  In such an example, the function $f$ must be supported in $ \cup_{\theta \in \Theta(Z)} \theta$. If the volume of this union is much less than 1, then for $q> 2$, $\| f \|_{L^q}$ will be much bigger than $ \| f \|_{L^2}$, and so our special class of examples will obey an inequality of the form (\ref{furbroadlq}). 

In fact, if we had good estimates for $| \Theta(Z) |$, then I believe we could input them into the proof of Theorem \ref{broadEP} to get some further estimates of the form (\ref{furbroadlq}), roughly following the argument in \cite{Gu4}.  

If $Z$ is an $m$-dimensional plane, then it is easy to check that $| \Theta(Z) | \sim (R^{1/2})^{m-1}$ and so $|\Omega(Z) | \sim (R^{1/2})^{m-1} \cdot (R^{1/2})^{-(n-1)}$.  It seems reasonable to conjecture that a similar bound holds for any $m$-dimensional variety $Z$ of small degree:

\begin{conjecture} \label{kakalg} If $Z$ is an $m$-dimensional variety in $\RR^n$ of degree at most $D$, then

\begin{equation} \label{kakalgeq} | \Theta(Z) | \le C(n, D, \eps) (R^{1/2})^{m-1 + \eps}. \end{equation}

\end{conjecture}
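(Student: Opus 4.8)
The plan is to prove Conjecture \ref{kakalg} by running the same architecture as the proof of Proposition \ref{mainind} --- induction on the dimension $m$ and the radius $R$, polynomial partitioning, and a cellular/algebraic dichotomy --- but with the broad ``norm'' $\BLpkA$ replaced by the counting function of good caps, i.e. by the weight $W = \sum_{\theta \in \Theta(Z)} \chi_{\Ttv}$ that records one good tube per good cap. Throughout one fixes $D$ and lets all constants depend on $n, D, \eps$. The base case $m=1$ is elementary: a tube of length $R$ and radius $R^{1/2+\dt}$ contained in $N_{R^{1/2+\dt}}(Z)$ must shadow a sub-arc of the bounded-degree curve $Z$, and a Bezout plus polynomial-inequality (Markov-type) argument shows that the tangent direction of $Z$ can turn by only $\lesssim R^{-1/2+O(\dt)}$ along any window of length $R$ in which $Z$ stays $R^{1/2+\dt}$-close to a line; since $Z\cap B_R$ splits into $\lesssim_D 1$ monotone arcs, $|\Theta(Z)| \lesssim_D R^{O(\dt)} \lesssim_\eps (R^{1/2})^{0+\eps}$.

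For the inductive step one stratifies $Z$ using the varieties $Z_w$ of Section 5 to pass to a ``regular'' piece on which $TZ$ is nearly constant and $Z$ projects non-degenerately onto an $m$-plane $V$, then applies Theorem \ref{polypart} to the pushforward of $W$ to $V$ at a degree $D'=D'(\eps,D)$, obtaining $\sim (D')^m$ cells $O_i$ with $\int_{O_i}W \sim (D')^{-m}\int W$; since each good tube has radius $R^{1/2+\dt}$ it is deep in at most $D'$ cells, and the usual dichotomy results. The algebraic case should go through cleanly: the good caps concentrate near a transverse complete intersection $Y\subset Z$ of dimension $m-1$ and degree $\lesssim D'$, the good tubes tangent to $Y$ are controlled by the inductive hypothesis in dimension $m-1$ (which is even stronger than needed, giving $(R^{1/2})^{m-2+\eps}$), and the good tubes transverse to $Y$ are controlled via Lemma \ref{transinterbound} --- each crosses $N_{R^{1/2+\dm}}(Y)$ transversally in $\lesssim_D 1$ of the $\rho$-balls $B_j$ with $\rho^{1/2+\dl}=R^{1/2+\dm}$ --- so that after rescaling to scale $\rho$ and tracking, as in Section \ref{secsmallerball}, that the rescaled tube becomes tangent to a translate $Z+b$, one applies the inductive hypothesis at radius $\rho$ and sums over $j$ and $b$, the translate multiplicity being absorbed by the gap between the $\delta$'s exactly as $(R/\rho)^{-m\eps}$ absorbs errors in Section \ref{secmainind}. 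The cellular case, however, is where the plan strains: unlike the additive set function $\BLpkA$, the count $|\Theta(Z)|$ records \emph{directions}, and restricting $Z$ to a cell does not obviously reduce the number of relevant directions by a power of $D'$; the only honest gain is the radius drop (a good tube deep in $O_i$ has a sub-tube of length $\gtrsim R/D'$ there), which yields $|\Theta(Z,R)| \lesssim D'\,(R/D')^{(m-1)/2+O(\eps)}$. The exponent of $D'$ is negative for $m\ge 3$, so the induction closes there, but for $m=2$ it is not, and that case must be handled separately.

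The main obstacle, therefore, is the case $m=2$ --- the general-$n$ analogue of Lemma 3.6 of \cite{Gu4} --- which is genuinely about the structure of (approximate) lines in a bounded-degree variety. A naive volume-and-overlap count suggests $|\Theta(Z)|\lesssim R^{1/2+O(\dt)}$ for a surface, but bounded overlap of the good tubes fails (two near-parallel good tubes can overlap over length $\sim R^{1-\dt}$), so one really needs the quantitative input that the directions of line segments of length $\gtrsim R$ lying within $R^{1/2+\dt}$ of an $m$-dimensional variety of degree $\le D$ fill out at most an $(m-1)$-dimensional subset of $S^{n-1}$ of degree $\lesssim_{n,D}1$, with equality only in the extremal case of an $m$-plane. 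For \emph{exact} lines this dimension bound is the classical bound on the Fano variety of lines contained in $Z$; the difficulty is the effective, scale-$R$, approximate version --- a Remez-type statement controlling, for how many $(z_0,v)$ the degree-$D$ polynomial $t\mapsto F(z_0+tv)$ can remain $\lesssim R^{1/2+\dt}$ on $[0,R]$ --- which is precisely what is not currently available in dimensions $n\ge 4$. Once such a statement is established for $m=2$, the covering estimate of \cite{W} used in the proof of Lemma \ref{equid2} converts the $(m-1)$-dimensional direction set into the bound $(R^{1/2})^{m-1+\eps}$, the inductive step above promotes it to all $m\ge 3$, and a secondary (routine) point is to verify that the degrees of the $Z_w$, of $Y$, and of the projected variety $\pi(Z)$ stay $\lesssim_{n,D}1$ through the recursion, which the quantitative algebraic geometry of Section 5 should supply.
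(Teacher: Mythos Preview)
This statement is labeled a \emph{conjecture} in the paper and is not proved there. The author says explicitly in the introduction that he has ``not been able to prove a good bound for $|\Theta(Z)|$ in higher dimensions,'' and Section 11 frames Conjecture \ref{kakalg} as a special case of the Kakeya conjecture. The only established instance is $n=3$, $m=2$, which is Lemma~3.6 of \cite{Gu4}. There is therefore no proof in the paper to compare your proposal against.

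Your proposal is also not a proof, as you yourself acknowledge: you isolate the $m=2$ case as requiring an effective, approximate bound on the variety of lines in a degree-$D$ surface and note that this ``is precisely what is not currently available in dimensions $n\ge 4$.'' That diagnosis agrees with the paper's. It is worth adding that even for $m\ge 3$ your cellular step is not self-similar as written: after passing to a cell and a sub-tube of length $\rho\sim R/D'$, the tube still has radius $R^{1/2+\dt}$ and sits in $N_{R^{1/2+\dt}}(Z)$, not in $N_{\rho^{1/2+\dt}}(Z)$, so invoking the inductive hypothesis at scale $\rho$ requires either an equidistribution step refining the neighborhood (cf.\ Section \ref{secsmallerball}) or a covering of each coarse cap $\tith$ by $\sim (D')^{(n-1)/2}$ fine caps $\theta$, and either route inflates the exponent of $D'$ by a positive amount for $m<n$. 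The underlying structural mismatch is that $|\Theta(Z)|$ counts \emph{directions}, which neither localize to cells nor rescale the way the measure $\mu_{\EP f}$ does in Proposition \ref{mainind}; this is why the partitioning architecture does not obviously transfer, and why the paper leaves Conjecture \ref{kakalg} open.
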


Conjecture \ref{kakalg} is a very special case of the Kakeya conjecture.  One variant of the Kakeya conjecture goes as follows:

\begin{conjecture} \label{kakgen} (Kakeya conjecture) Suppose that $X \subset B^n(1)$.  Suppose that $T_i \subset X$ are tubes of length 1 and radius $\delta$, pointing in $\delta$-separated directions.  Then

$$ \textrm{Number of tubes} \le C(\eps) \delta^{-\eps} \frac{ \Vol(X)}{\Vol( \textrm{tube} )} . $$

\end{conjecture}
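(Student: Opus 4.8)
The plan is to attack the Kakeya conjecture with the same two-step philosophy used for the restriction problem above: first reduce to a configuration in which the tubes are \emph{algebraically structured}, and then exploit that structure to count them. Concretely, given $\delta$-tubes $T_i \subset X \subset B^n(1)$ pointing in $\delta$-separated directions, one would like to show that if the number of tubes exceeds $C(\eps)\delta^{-\eps}\,\Vol(X)/\Vol(\textrm{tube})$, then $X$ must concentrate in the $\delta$-neighborhood of a low-degree algebraic variety $Z$, at which point a Kakeya-type estimate for such neighborhoods --- precisely the content of Conjecture~\ref{kakalg} --- would produce a contradiction.

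First I would run polynomial partitioning, Theorem~\ref{polypart}, applied to $\sum_i \chi_{T_i}$ with a degree parameter $D = D(\eps)$. A line meets $Z(P)$ at most $D$ times, so each tube $T_i$ penetrates $\lesssim D$ of the $\sim D^n$ cells without entering the $\delta$-neighborhood $W$ of $Z(P)$. In the \emph{cellular case}, where a fixed fraction of the tube mass avoids $W$, one distributes the tubes among the cells, observes that each cell sees a $\sim D^{-n}$ fraction of $\Vol(X)$ but only $\lesssim D^{1-n}$ of the tubes, rescales each cell to unit scale, and closes by induction on $\delta$ after checking that the exponent of $D$ has the right sign. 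In the \emph{algebraic case}, the tube mass concentrates in $W$; one separates the tubes that are \emph{tangent} to $Z(P)$ (direction within $O(\delta)$ of $T_z Z(P)$ along the tube) from those that are \emph{transverse}. A transverse tube crosses $W$ in $\lesssim 1$ pieces by the Bezout-type bound of Lemma~\ref{transinterbound}, so rescaling each piece to unit scale and inducting on $\delta$ disposes of them, the directions remaining separated. The tangent tubes lie in $N_\delta(Z(P))$ and should be handled by induction on $m = \dim Z(P)$, the base $m=1$ being another application of Bezout.

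The main obstacle --- and the reason the conjecture is open for $n \ge 3$ --- is exactly the tangential algebraic case: here there is no substitute for the transverse equidistribution estimate (Lemma~\ref{equid2}) that makes the restriction argument close, because in the pure Kakeya setting there is no oscillation and no $L^2$ orthogonality to exploit, only a combinatorial incidence count. A good bound on $|\Theta(Z)|$ for a general $m$-dimensional $Z$ of bounded degree --- i.e. Conjecture~\ref{kakalg} --- would control the tangential contribution, but establishing that bound is essentially as hard as the original problem: when $Z$ is an $m$-plane the estimate is sharp, and for curved $Z$ one must show that curvature cannot allow more tubes to pack tangentially, which is the crux of every known partial result (the bush argument, Wolff's hairbrush bound, the Katz--\L aba--Tao and Katz--Tao sum-difference refinements, and the more recent polynomial-method improvements). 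So although the outline above formally reduces the conjecture to Conjecture~\ref{kakalg} together with routine inductions, that reduction does not simplify matters; the tangential count is where all the difficulty is concentrated, and I do not see how to carry it out.
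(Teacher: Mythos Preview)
The statement you are asked to prove is a \emph{conjecture}: it is the Kakeya conjecture, one of the central open problems in harmonic analysis, and the paper does not prove it. The paper merely states this formulation, remarks that it sits between the maximal-function and Minkowski-dimension versions of Kakeya, and observes that Conjecture~\ref{kakalg} is the special case where $X$ is the $\delta$-neighborhood of a low-degree variety. There is no proof in the paper to compare against.

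Your proposal is not a proof either, and to your credit you say so: you correctly identify the tangential algebraic case as the obstruction and note that it is essentially Conjecture~\ref{kakalg}, itself open. But even the parts you describe as ``routine'' have gaps. In the cellular step you equidistribute $\sum_i \chi_{T_i}$, yet you then assert that each cell carries a $\sim D^{-n}$ fraction of $\Vol(X)$; that is a different quantity and is not controlled by the partition. You also cannot simply ``rescale each cell to unit scale and induct on $\delta$'': inside a cell the tubes are only short segments of uncontrolled length, not unit-length $\delta$-tubes, so the inductive hypothesis does not apply. The same rescaling issue arises in your transverse case. These difficulties are well known --- they are part of why polynomial partitioning, which closes cleanly for restriction via $L^2$ orthogonality and transverse equidistribution, has not resolved Kakeya --- and your outline does not address them. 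In short, the statement is open, the paper makes no claim to prove it, and your sketch, while capturing the heuristic parallel with the restriction argument, does not constitute a proof strategy that can be completed with current tools.
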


(I haven't seen this exact version of the Kakeya conjecture in print before, but it's straightforward to check that the maximal function version of the Kakeya conjecture implies Conjecture \ref{kakgen}, which implies the Minkowski dimension version of the Kakeya conjecture.)  
Now Conjecture \ref{kakalg} is just the special case of Conjecture \ref{kakgen} where the set $X$ is the $\delta$-neighborhood of a low degree algebraic variety.

Conjecture \ref{kakalg} also came up in ongoing joint work with Josh Zahl on the Kakeya problem in $\RR^4$.  
I think it is a basic issue that comes up in trying to apply polynomial methods to the restriction problem or the Kakeya problem.


\begin{thebibliography}{5}

\vskip.125in

\bibitem[Be]{Be} I. Bejenaru, The optimal trilinear restriction estimate for a class of hypersurfaces with curvature, arXiv:1603.02965

\bibitem[Be2]{Be2} I. Bejenaru, Optimal multilinear restriction estimates for a class of surfaces with curvature,  arXiv:1606.02634

\bibitem[B1]{B1} J. Bourgain, Besicovitch type maximal operators and applications to Fourier analysis. Geom. Funct. Anal. 1 (1991), no. 2, 147-187. 

\bibitem[B2]{B2} J. Bourgain,  $L^p$-estimates for oscillatory integrals in several variables. Geom. Funct. Anal. 1 (1991), no. 4, 321-374.

\bibitem[B3]{B3} J. Bourgain, Some new estimates on oscillatory integrals, Annals Math. St. 42, Princeton UP (1995), 83-112.

\bibitem[B4]{B4} J. Bourgain, Moment inequalities for trigonometric polynomials with spectrum in curved hypersurfaces. Israel J. Math. 193 (2013), no. 1, 441-458. 

\bibitem[BD]{BD} J. Bourgain, C. Demeter, The proof of the $l^2$ decoupling conjecture. Ann. of Math. (2) 182 (2015), no. 1, 351-389.

\bibitem[BG]{BG}  J. Bourgain, L. Guth, Bounds on oscillatory integral operators based on multilinear estimates. Geom. Funct. Anal. 21 (2011), no. 6, 1239-1295. 

\bibitem[BCT]{BCT} J. Bennett, A. Carbery, and T. Tao, On the multilinear restriction and Kakeya conjectures. Acta Math. 196 (2006), no. 2, 261-302.

\bibitem[CKW]{CKW}  X. Chen, N. Kayal, and A. Wigderson, Partial derivatives in arithmetic complexity and beyond. Found. Trends Theor. Comput. Sci. 6 (2010), no. 1-2, pages 1-138 (2011).

\bibitem[CEGSW]{CEGSW} K.L. Clarkson, H. Edelsbrunner, L. Guibas, M Sharir, and E. Welzl,
{\it Combinatorial Complexity bounds for arrangements of curves and spheres}, Discrete Comput. Geom.
(1990)  5,  99-160.


\bibitem[D]{D} Z. Dvir, On the size of Kakeya sets in finite fields. J. Amer. Math. Soc. 22 (2009), no. 4, 1093-1097.

\bibitem[GP]{GP} V. Guillemin and A. Pollack, { \it Differential Topology}, AMS Chelsea Publishing, 1974, reprinted 2010.



\bibitem[G]{Gu4} L. Guth, A restriction estimate using polynomial partitioning. J. Amer. Math. Soc. 29 (2016), no. 2, 371-413. 

\bibitem[GHI]{GHI}, L. Guth, J. Hickman, M. Iliopoulou, Oscillatory integral operators and polynomial partitioning, preprint 

\bibitem[GK]{GK} L. Guth and N. Katz, On the Erd{\H o}s distinct distances problem in the plane,  Ann. of Math. (2) 181 (2015), no. 1, 155-190. 

\bibitem[H]{H} L. Hormander, Oscillatory integrals and multipliers on $FL^p$, Arkiv Math. II (1973), 1-11.


\bibitem[L]{L} S. Lee, Linear and bilinear estimates for oscillatory integral operators related to restriction to hypersurfaces, J. Funct. Anal. 241:1 (2006), 56-98.





\bibitem[OW]{OW} Y. Ou and H. Wang, A cone restriction estimate using polynomial partitioning,  arXiv:1704.05485

\bibitem[S]{Ste} E. Stein, Some problems in harmonic analysis. Harmonic analysis in Euclidean spaces (Proc. Sympos. Pure Math., Williams Coll., Williamstown, Mass., 1978), Part 1, pp. 3-20, Proc. Sympos. Pure Math., XXXV, Part, Amer. Math. Soc., Providence, R.I., 1979.

\bibitem[S2]{St2} E. Stein, Oscillatory integrals in Fourier analysis, Beijing lectures in Harmonic Analysis, Annals Math. St. 112, Princeton UP (1986).

\bibitem[StTu]{ST} A. Stone, and J. Tukey, Generalized "sandwich'' theorems.
Duke Math. J. 9, (1942) 356-359. 


\bibitem[TVV]{TVV}  T. Tao, A. Vargas, L. Vega, A bilinear approach to the restriction and Kakeya conjectures. J. Amer. Math. Soc. 11 (1998), no. 4, 967-1000.

\bibitem[T1]{T1} T. Tao, The Bochner-Riesz conjecture implies the restriction conjecture.
Duke Math. J. 96 (1999), no. 2, 363-375. 

\bibitem[T2]{T2} T. Tao,  A sharp bilinear restrictions estimate for paraboloids. Geom. Funct. Anal. 13 (2003), no. 6, 1359-1384. 



\bibitem[Wi]{Wi} L. Wisewell, Kakeya sets of curves, Geom. Funct. Anal. 15 (2005), no. 6, 1319-1362.

\bibitem[W1]{W1} T. Wolff, A sharp bilinear cone restriction estimate. Ann. of Math. (2) 153 (2001), no. 3, 661-698.



\bibitem[W2]{W4} T. Wolff, Local smoothing type estimates on $L^p$ for large p. Geom. Funct. Anal. 10 (2000), no. 5, 1237-1288. 

\bibitem[W3]{W5} T. Wolff, A Kakeya-type problem for circles. Amer. J. Math. 119 (1997), no. 5, 985-1026. 

\bibitem[Wo]{W}  R. Wongkew, Volumes of tubular neighbourhoods of real algebraic varieties. Pacific J. Math. 159 (1993), no. 1, 177-184.


\end{thebibliography}
\end{document}